\newcommand*{\dosubeqns}{}
  \@ifdefinable \subeqnstoks {\newtoks \subeqnstoks}
  \DeclareRobustCommand*{\settheparentequation}{%
    \def \theparentequation
  }
  \newenvironment{subeqns*}[1][gather]{%
    \subequations
    \the \subeqnstoks
    \renewcommand*{\dosubeqns}[1]{\begin{#1}##1\end{#1}}%
    \collect@body \dosubeqns
  }{%
    \protected@edef \@tempa {%
      \global \subeqnstoks {%
        \protect \setcounter{parentequation}{\number\value{parentequation}}%
        \settheparentequation {\theparentequation}%
        \protect \setcounter{equation}{\number\value{equation}}%
      }%
    }%
    \@tempa
    \endsubequations
  }
\newtheorem{theorem}{Th\'eor\`eme}[section]
\newtheorem{proposition}[theorem]{Proposition}
\newtheorem{construction}[theorem]{Construction}
\newtheorem{lemma}[theorem]{Lemme}
\newtheorem{remark}[theorem]{Remarque}
\newcommand{\N}{\mathbb{N}}
\newcommand{\Z}{\mathbb{Z}}
\newcommand{\R}{\mathbb{R}}
\newcommand{\C}{\mathbb{C}}
\newcommand{\Dc}{\mathcal{D}}
\def \ds{\displaystyle}
\def \Dc{\text{D}_c}
\def \Dc1{\text{D}_d}
\def \Dc2{\mathcal{D}}
\title{Blocs de chiffres de taille croissante dans les nombres premiers}
\author{Gautier Hanna}
\address{1. Universit\'e de Lorraine, Institut Elie Cartan de Lorraine, UMR 7502, Vandoeuvre-l\`es-Nancy, F-54506, France;
2. CNRS, Institut Elie Cartan de Lorraine, UMR 7502, Vandoeuvre-l\`es-Nancy, F-54506, France}
\email{gautier.hanna@univ-lorraine.fr}
\keywords{nombres premiers, sommes d'exponentielles, chiffres}
\subjclass[2010]{Primary 11A63; Secondary 11B85, 11N05, 11L20}
\begin{document}

\begin{abstract}
In this article, we prove a theorem à la Mauduit et Rivat (prime number theorem, Moebius randomness principle) for functions that count digital blocks whose length is a growing function tending to infinity. These sequences are not automatic. To obtain our results, we control sums of type I and II and use an adapted and refined version of the carry propagation property as well as standard methods from harmonic analysis.
\end{abstract}

\maketitle

\tableofcontents

\section{Introduction}

Dans \cite{gelfond}, Gelfond fournit l'estimation asymptotique du nombre d'entiers en progression arithmétique dont la somme des chiffres est dans une classe de congruence fix\'ee. Il termine son article par une série de questions, dont celle de l'estimation du nombre de nombres premiers inf\'erieurs \`a $x$ tels que $s_q(p) \equiv a \bmod m$, où ici $s_q(\cdot)$ désigne la somme des chiffres en base $q$ ($q$ étant, et sera pour tout cet article, un entier supérieur ou égal à $2$), et $a$ et $m$ deux entiers presque quelconques (une obstruction naturelle pour $a$ et $m$, conséquence d'une généralisation de la règle de $9$, apparaît), et dans \cite{MR:gelfond}, Mauduit et Rivat arrivent à estimer cette quantité près de $40$ ans après que cette question fut posée. 

Peu de temps après la parution de \cite{MR:gelfond}, Kalai, dans \cite{Kalai1}, a posé une question qui peut s'apparenter à une extension de la question de Gelfond : il s'agit de montrer un théorème des nombres premiers pour une suite liée à une somme partielle des chiffres d'un entier. Green \cite{Green} répond partiellement à cette question, puis Bourgain \cite{bourgain} en s'inspirant des travaux de \cite{MR:gelfond} complète la réponse de Green. Cette question possède une généralisation naturelle également proposée par Kalai \cite{Kalai2}. Cette généralisation consiste à démontrer, si $\mu$ est la fonction de Möbius, $P\in \mathbb{F}_2[X_1, \ldots , X_N]$ de degré inférieur à polylog ($N$) (le degré est ici le nombre maximal de variables entrant en compte dans un monôme), et si pour tout entier $n < 2^N$, $\epsilon_0(n), \ldots , \epsilon_{N-1}(n)$ désigne les $N$ premiers chiffres de $n$ en base $2$, $\epsilon_0$ étant le chiffre des unités (les chiffres au-delà de $N$ sont nécessairement nuls, il n'est pas utile de les marquer) que \begin{align}\label{PAM Kalai}
\ds\sum_{n < 2^N}\mu(n)(-1)^{P(n)} = o(2^N),
\end{align} avec ici \begin{equation}\label{Forme de P(n)}
P(n) := P(\epsilon_0(n), \ldots , \epsilon_{N-1}(n)).
\end{equation}

Avec ces notations, puisque $s_2(n) = P(n)$ avec $P(X_1,\ldots , X_N) = \sum_i X_i$, on voit bien que cette question peut être perçue comme une généralisation de celle de Gelfond. La première question de Kalai concernait les polynômes de degré au plus $1$, la seconde s'intéresse aux polynômes de degré strictement plus grand que $1$. Un exemple de polynôme de degré $2$ est donné par $P(X_1,\ldots , X_N) := \sum_i X_iX_{i+1}$ et $f(n) :=
(-1)^{P(n)}$ engendre alors la suite de Rudin-Shapiro. Dans ce cas, Tao \cite{Kalai2} a esquissé une démonstration de \eqref{PAM Kalai} sans terme d'erreur explicite, et Mauduit et Rivat \cite{RudinShapiro} en utilisant la méthode qu'ils ont développé dans \cite{MR:gelfond} ont fourni un terme d'erreur explicite. De plus, Mauduit et Rivat ne se contentent pas de regarder la suite de Rudin-Shapiro, mais une classe de suite $(e(\alpha a(n)))_{n \geq 0}$ où $e(x):= \exp (2i\pi x), \alpha \in \R \backslash \mathbb{Z}$ et $(a(n))_{n \geq 0}$ est une suite liée aux comptages de blocs en base $2$, ce que Mauduit et Rivat nomment suites de Rudin-Shapiro généralisées. 

Dans \cite{Hanna1}, les résultats de \cite{RudinShapiro} ont été étendus dans le cas où $(a(n))_{n \geq 0}$ est une suite $\beta$-récursive possédant certaines propriétés peu restrictives. Les suites $\beta$-récursives généralisent les suites bloc-additives (parfois nommées suites digitales) \cite[Section $3.3$]{AlloucheShallit}, \cite{Cateland} dans le sens où cette fois-ci il est permis de compter les blocs $0\cdots 0$. En particulier, dans \cite{Hanna1}, les résultats de \cite{RudinShapiro} ont été étendus à toutes les suites comptant les blocs, que ceux-ci soient connexes ou non (le bloc $1*1*00$ est non connexe) en base $q$ quelconque. Les résultats de \cite{RudinShapiro} et de \cite{Hanna1} concernent les suites qui sont automatiques, et de ce fait sont induits par \cite{MuellnerAutomate}.

Une explicitation des calculs effectués dans \cite{RudinShapiro} et \cite{Hanna1} permet de montrer qu'il existe deux constantes absolues strictement positives $C_1$ et $C_2$ telles que 
 \begin{equation}\label{Equation Poly Chap 1}
\ds\sum_{n < 2^N}\mu(n)(-1)^{P_N(n)} \ll  N^{C_1} 2^{N-
C_2N\frac{1}{\beta2^\beta}  +o\left(\frac{N}{\beta2^\beta}\right)} \qquad (N \rightarrow \infty),
\end{equation} avec $P_N(X_1,\ldots , X_N) := \sum_{i \leq N- \beta +1} \tilde{P}(X_i , \ldots X_{i+\beta - 1})$ et $\tilde{P}$ un polynôme à $\beta$-variables vérifiant une non-trivialité proche de celle réclamée aux suite $\beta$-récursives, pour plus de précision voir \cite[Théorème $ 0.4.1$]{HannaThese}. En particulier le terme majorant de \eqref{Equation Poly Chap 1} reste $o(2^N)$ pour $\beta < \log N / \log 2$, degré tout à fait acceptable dans l'optique de la conjecture de Kalai : dans la théorie de la complexité, le polylog désigne une puissance quelconque du logarithme. 

Une extension naturelle de ces résultats consiste à s'intéresser à la même question, mais pour des suites liées aux comptages de blocs de taille croissante. Si nous notons à présent $P : \N \rightarrow \N$ non plus un polynôme mais une application croissante, il convient de s'intéresser à \begin{equation}
a_P(n) := \ds\sum_{i \geq 0} \epsilon_{i+P(T_q(n))}(n)\cdots \epsilon_i(n)
\end{equation} où \begin{equation}\label{definition TQ}
T_q(\cdot) := \lfloor \log (\cdot) / \log q \rfloor
\end{equation}  indique l'indice du dernier chiffre non nul dans l'écriture en base $q$.

En base $2$, $a_P(n)$ compte les blocs composés de $P(T_2(n))$ `$1$' consécutifs; si $P$ est la fonction constante ceci correspond à la suite $(b_d(n))_{n \geq 0}$ introduite dans \cite{RudinShapiro}. S'il était possible d'utiliser la méthode de Mauduit et Rivat pour $P(x) = x$; puisque celle-ci fournit un résultat d'équirépartition \cite[Corollary $2$]{RudinShapiro}, une conséquence serait que la moiti\'e des nombres premiers seraient des nombres de Mersenne (voir Partie \ref{Optim} pour plus de précision). Dans cet article, nous montrons que la vitesse $P(y) < \log y / \log q$ est admissible en démontrant le théorème suivant  (ici et dans le reste de l'article, la constante implicite dépend de $q$) : 

\begin{theorem}\label{Thm VM}
Soit $P$ une fonction croissante, positive, et \`a valeurs enti\`eres pour laquelle il existe une constante strictement positive $c < 1/ \log q$ telle que $P(y) \leq c \log y$ pour tout r\'eel $y$ assez grand. Alors uniform\'ement en $\vartheta \in \R$ :
\begin{equation}\label{equa thm chap 2}
\left |\ds\sum_{ n \leq x}\Lambda(n)f_P(n)e(\vartheta n) \right| \ll c_1(q)(\log x)^{3+\frac{\omega(q)}{4}}xq^{-\frac{1}{64}\gamma_P(\frac{1}{120}\lfloor \frac{\log x}{\log q}\rfloor  , \, \lfloor \frac{\log x}{\log q}\rfloor)},
\end{equation}
avec  \begin{equation}\label{Forme gamma deuxieme prop fourier}
\gamma_P(l,k) = l\left(1-\frac{\log \left(q^{P(k)}-8\left(\sin  \frac{\pi |\!|\alpha|\!|}{4} \right)^2\right)}{P(k)\log q} \right),
\end{equation} $\Lambda$ la fonction de von Mangoldt, $f_P(n) = e(\alpha a_P(n))$ et $c_1(q) = q^{13/64}\max\left((\log q)^3,\tau(q)^{1/4}\right)(\log q)^{-3-\frac{\omega(q)}{4}}$. De plus ce théorème reste valable avec $\mu$ en lieu et place de $\Lambda$.
\end{theorem}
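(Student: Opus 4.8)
The plan is to follow the Mauduit--Rivat strategy adapted to blocks of growing length. First I would reduce the sum $\sum_{n\le x}\Lambda(n)f_P(n)e(\vartheta n)$ (and similarly for $\mu$) to bilinear sums of type I and type II by Vaughan's identity; since $f_P(n)e(\vartheta n)$ is multiplicatively unstructured, this is the only available entry point. The crux is therefore to bound
\[
S_{\mathrm{I}}=\sum_{m\sim M}\Bigl|\sum_{n\sim N}f_P(mn)e(\vartheta mn)\Bigr|,\qquad
S_{\mathrm{II}}=\sum_{m\sim M}\Bigl|\sum_{n\sim N}a_m b_n\, f_P(mn)e(\vartheta mn)\Bigr|,
\]
with the ranges of $M,N$ dictated by Vaughan's identity, and the gain must beat $q^{-\frac{1}{64}\gamma_P}$. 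Because $P$ is \emph{unbounded}, the block length $P(T_q(mn))$ is not fixed, so before anything else I would fix a dyadic scale $x$, note that for $n$ in the relevant range $T_q(n)=k:=\lfloor\log x/\log q\rfloor$ up to $O(1)$, and treat $P(k)$ as a constant \emph{for that scale}; this is what lets the automaton-flavoured estimates of \cite{RudinShapiro},\cite{Hanna1} be imported fibre-by-fibre, at the cost of tracking how the constants depend on $P(k)$, which is exactly where the ratio $\log(q^{P(k)}-8\sin^2(\pi\|\alpha\|/4))/(P(k)\log q)$ in \eqref{Forme gamma deuxieme prop fourier} comes from.

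The key technical ingredient is the \emph{carry propagation property} in the refined form alluded to in the abstract: writing $f_P(n)$ via its truncation $f_P^{(\lambda)}(n)$ that only looks at a window of $\lambda\gg P(k)$ digits, one shows that $f_P(n)$ and $f_P^{(\lambda)}(n)$ agree for all but $O(q^{-c\lambda}\cdot(\text{length}))$ values of $n$ in a dyadic block, uniformly, and crucially that adding $mn'$-type perturbations only propagates carries over $O(\lambda)$ digits except on a sparse set. With this, $f_P$ becomes, on the bulk, a function of a bounded digit window, hence amenable to a van der Corput / Fourier expansion: I would insert a discrete Fourier expansion of the truncated block-counting function over $\Z/q^\lambda\Z$, producing exponential sums $\sum_n e((h/q^\lambda)n+\vartheta n)$ that are handled by the geometric series bound, while the Fourier coefficients of the block indicator contribute the factor $(q^{P(k)}-8\sin^2(\pi\|\alpha\|/4))$ after the elementary trigonometric computation that the complex exponential $e(\alpha\cdot(\text{number of full blocks}))$ forces. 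For type II one additionally needs a Cauchy--Schwarz in $m$ (or $n$) and a double large sieve / van der Corput step to decouple $m$ and $n$ inside $f_P(mn)$, again using carry propagation to localize the interaction.

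The main obstacle I anticipate is \textbf{uniformity in $\vartheta$ together with the growth of $P$}: the truncation window $\lambda$ must be taken $\gg P(k)$ to see a whole block, so the error from carry propagation is of size roughly $q^{-cP(k)}$ per digit and one loses a factor polynomial in the length; simultaneously the Fourier gain per digit is only $1-\log(q^{P(k)}-8\sin^2(\pi\|\alpha\|/4))/(P(k)\log q)$, which \emph{degrades to $0$ as $P(k)\to\infty$} unless $\|\alpha\|$ is bounded away from $0$ — this is precisely why the hypothesis $c<1/\log q$ is needed, so that $P(k)\le c\log x$ keeps $q^{P(k)}\le x^{c\log q}$ small enough that the accumulated gain $\gamma_P(\tfrac1{120}k,k)$ over $\asymp k$ digits still beats the polynomial losses and Vaughan's $x^{1/2}$-type defects. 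Balancing these — choosing $\lambda$, the dyadic decomposition, and the Vaughan ranges $M,N$ so that the per-digit gain survives being raised to the power (number of usable digits) $\asymp \tfrac1{120}k$ while the cumulative carry error stays $o(x)$ — is the delicate part; once the exponents are set, the explicit constants $c_1(q)$ and the $(\log x)^{3+\omega(q)/4}$ factor come out of bookkeeping the divisor-function and $q$-dependent constants through the type I/II estimates, exactly as in \cite{RudinShapiro}, \cite{Hanna1}. The argument for $\mu$ in place of $\Lambda$ is identical since Vaughan's identity applies verbatim to both.
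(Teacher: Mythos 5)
Your skeleton (reduction to type I/II sums via a Vaughan-type identity, truncation, Fourier decay of the block-counting phase producing the factor $q^{P(k)}-8\sin^2(\pi|\!|\alpha|\!|/4)$, then parameter balancing) does match the paper's route, but the step on which you lean — ``treat $P(k)$ as a constant for that scale, and import the fixed-length estimates of \cite{RudinShapiro} and \cite{Hanna1} fibre-by-fibre'' — is precisely where a genuine gap lies. The outer variable is not the problem: the product $mn$ does sit in a few dyadic classes, and the paper indeed splits over $k\in\{\mu+\nu-4,\dots,\mu+\nu-1\}$. The problem is that every truncation $n\mapsto n\bmod q^{\rho}$ and every Cauchy--Schwarz/van der Corput shift changes $T_q$, hence the length of the blocks being counted, so the carry-propagation property in the sense of \cite[Definition 1]{RudinShapiro} is simply false for non-constant $P$ (this is stated explicitly in the Panorama section). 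The paper's repair is structural: it works with the two-variable function $a_P(x,y)$ and the truncation $f_P^{(\rho)}(n)=e(\alpha\,a_P(n\bmod q^{\rho},T_q(n)))$, which freezes the length at $T_q(n)$ of the untruncated integer. This truncation no longer behaves like Mauduit--Rivat's, which forces a reworked type I treatment (the substitute $n'=u+wq^{\kappa}+q^{\kappa+\rho}\lfloor q^{l-\rho}\rfloor$ and the splitting at $M_d$, Lemme \ref{Lemme propa SI} and Lemme \ref{Lemme premiere estimation Fourier}), and, for type II, before the doubly truncated function $f_P^{(\mu_1,\mu_2)}$ can be introduced one needs the key new Lemme \ref{Lemme Olivier Robert}: $T_q(mn)=T_q((m+q^{\mu-\rho})(n+q^{\rho}))$ for all but $O((\mu+\nu)q^{\mu+\nu-\rho})$ pairs, proved by a divisor-sum argument over integers having a long run of digits $q-1$. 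Your ``carry propagation localizes the interaction'' does not cover this: the obstruction is a change of the length $T_q$ of the product under the bilinear shifts, not a carry confined to a low digit window, and without an analogue of Lemme \ref{Lemme Olivier Robert} (and of Lemmes \ref{Lemme digital premiere troncature SII}--\ref{Lemme digital double troncature}) the type II decoupling does not go through.

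A second, quantitative slip: the hypothesis $P(y)\le c\log y$ is applied at $y\asymp\lfloor\log x/\log q\rfloor$, so $q^{P(k)}\ll(\log x)^{c\log q}$ with $c\log q<1$, and the accumulated saving is $\gamma_P(\tfrac{1}{120}k,k)\gg(\log x)^{1-c\log q}/\log\log x\to\infty$, which is how Section \ref{Sec Choix de P} verifies \eqref{Condition sur P SI} and \eqref{Grosse restriction sur P}. With your reading ($P(k)\le c\log x$, hence $q^{P(k)}\le x^{c\log q}$) the exponent $\gamma_P$ would tend to $0$ and the bound \eqref{equa thm chap 2} would be trivial; also $|\!|\alpha|\!|$ being bounded away from $0$ is automatic for a fixed non-integer $\alpha$, so it cannot be the rescuing mechanism — the degradation as $P(k)\to\infty$ is intrinsic and is exactly what the condition $c<1/\log q$ compensates.
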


\begin{remark}
Si $P(x)$ v\'erifie les conditions du th\'eor\`eme, alors le majorant de l'\'equation \eqref{equa thm chap 2} est $o(x)$ si $x \rightarrow \infty$. 
Par exemple si $q=2, \alpha = 1/2$, comme $\log (1-u) \leq -u$ si $u \in [0,1[$ :
 \begin{align*}
\gamma_P \left(\frac{1}{120} \left\lfloor \frac{\log x}{\log 2} \right\rfloor  , \, \left\lfloor \frac{\log x}{\log 2} \right\rfloor \right) &= - \frac{1}{120} \left\lfloor \frac{\log x}{\log 2}\right\rfloor \log \left(1- \frac{8\left(\sin \pi /8 \right)^2}{2^{P\left(\left\lfloor \frac{\log x}{\log 2} \right\rfloor \right)}} \right) \cdot \frac{1}{P\left(\left\lfloor \frac{\log x}{\log 2} \right\rfloor \right) \log 2}
\\ & \geq  \frac{1}{120} \left\lfloor \frac{\log x}{\log 2}\right\rfloor \frac{8\left(\sin \pi /8 \right)^2}{2^{P\left(\left\lfloor \frac{\log x}{\log 2} \right\rfloor \right)}} \cdot \frac{1}{P\left(\left\lfloor \frac{\log x}{\log 2} \right\rfloor \right) \log 2}
\end{align*} et si $$P(x) = \left \lfloor \frac{2}{3} \frac{\log x}{ \log 2} \right \rfloor \leq \frac{2}{3} \frac{\log x}{ \log 2}, $$ nous obtenons :
\begin{align*}
\gamma_P \left(\frac{1}{120} \left\lfloor \frac{\log x}{\log 2} \right\rfloor  , \, \left\lfloor \frac{\log x}{\log 2} \right\rfloor \right) & \geq \frac{1}{120} \left\lfloor \frac{\log x}{\log 2}\right\rfloor \frac{8\left(\sin \pi /8 \right)^2}{\left\lfloor \log x /\log 2 \right\rfloor^{2/3}} \cdot \frac{1}{2/3 \log \left( \left\lfloor \log x / \log 2  \right\rfloor \right) }.
\end{align*} En posant $c_2' = \frac{3 \cdot 8\left(\sin \pi /8 \right)^2}{2 \cdot 64 \cdot 120} > 0$, nous trouvons que dans ce cas pr\'ecis, le membre de droite de l'\'equation \eqref{equa thm chap 2} est major\'e par \begin{align*}
x (\log x)^{c_1'}2^{-c_2' \frac{\left\lfloor \log x / \log 2 \right\rfloor ^{1/3} }{\log \left\lfloor \log x / \log 2\right\rfloor}}
\end{align*} qui est $o(x)$ si $x \rightarrow \infty$. Une d\'emonstration plus g\'en\'erale de ce fait est donn\'ee dans la Partie \ref{Sec Choix de P}.
\end{remark}
 
Si $\alpha = 1/2$, $f_P$ n'est pas automatique (voir Partie \ref{Annexe 4}), de sorte que les résultats de cet article ne sont pas induits par \cite{MuellnerAutomate}.

\section{Notations}

Pour des raisons techniques nous ne regarderons pas la {suite $(a_P(n))_{n\geq 0}$} mais une application $a_P : \N \times \N \rightarrow \N$ \`a deux variables d\'efinie par : \begin{equation}\label{def ap(x,y)}
a_P(x,y) := \ds\sum_{i \geq 0}\epsilon_{i+P(y)}(x)\cdots\epsilon_i(x).
\end{equation} 
Nous d\'efinissons alors pour $\rho$ un entier, $a_P^{(\rho)}(x,y) := a_P(x \bmod q^\rho,y)$. Avec ces d\'efinitions, nous avons $a_P(n) = a_P(n,T_q(n))$ et $a_P^{(\rho)}(n) = a_P^{(\rho)}(n,T_q(n)) = a_P(n \bmod q^\rho, T_q(n))$, en rappelant que $T_q$ est définie par \eqref{definition TQ}. 

 Si $\alpha$ est un nombre r\'eel, nous d\'efinissons  $f_P(x,y) := e(\alpha a_P(x,y))$, $f_P(n):= f_P(n,T_q(n))$, \textit{etc}. Nous d\'efinissons \'egalement pour $0 \leq \mu_1 \leq \mu_2$ des entiers $$f_P^{(\mu_1,\mu_2)}(x,y) := f_P^{(\mu_2)}(x,y)\overline{f_P^{(\mu_1)}(x,y)}.$$ Il s'agit d'une fonction doublement tronqu\'ee qui jouera un r\^ole crucial dans les estimations des sommes de type II.
 
Si $\mu_0, \mu_2$ et $n$ sont des entiers tels que $\mu_0 \leq \mu_2$, nous notons $r_{\mu_0, \mu_2}(n)$ l'entier $u$ tel que $n = kq^{\mu_2} + uq^{\mu_0} + (n \bmod q^{\mu_0})$ et $0 \leq u < q^{\mu_2-\mu_0}$.

\begin{remark}\label{Remarque sur la fonction gammaP}
La fonction $\gamma_P$ d\'efinie par \eqref{Forme gamma deuxieme prop fourier} intervient dans le contr\^ole de la transform\'ee de Fourier de $f_P$. Ainsi, tout comme dans \cite[equation $(25)$]{RudinShapiro} nous avons uniformément en $t \in \R$ $$1 = \ds\sum_{0 \leq h < q^{\lambda}}\left|\frac{1}{q^\lambda}\ds\sum_{0\leq u < q^\lambda}f_P(uq^\kappa,k)e(-u(h+t)) \right|^2 \leq q^{\lambda-2\gamma_P(\lambda,k)} $$ et donc $\gamma_P(\lambda,k) \leq \lambda/2$.
\end{remark}

\section{Panorama de la preuve}\label{Chap 2 Panorama}




Depuis \cite{RudinShapiro}, une stratégie naturelle pour prouver un principe d'aléa de Möbius pour une fonction définie sur les chiffres est de vérifier les \cite[Definition $1-2$]{RudinShapiro}. S'il est possible dans notre cas, en ajustant les idées de \cite{RudinShapiro}, d'obtenir \cite[Definition $2$]{RudinShapiro}, en revanche, d\`es que $P$ n'est plus constante, il est impossible d'obtenir l'autre condition.

En effet, si $\tau$ est tel que $P(\tau) \neq P(\tau-1)$; alors, pour $n$ et $\rho$ tels que $T_q(n) = \tau$ et $n\bmod q^\rho < n$, 
nous avons alors $ P(T_q(n)) = P(\tau) \neq P(T_q(n \bmod q^\rho))$. {Ceci veut dire que le nombre de chiffres consid\'er\'es dans nos comptages de blocs pour $n$ et $n \bmod q^\rho$ n'est pas le m\^eme et on ne peut pas esp\'erer de simplification dans l'expression $a_P(n) - a_P(n \bmod q^\rho)$.}  Il n'y a donc \textit{a priori} aucune raison de pouvoir contr\^oler le nombre de $l$ tels que $$a_P(lq^\kappa + k_1)-  
a_P(lq^\kappa+k_1+k_2) \neq a_P((lq^\kappa + k_1) \bmod q^\rho)-  
a_P((lq^\kappa+k_1+k_2) \bmod q^\rho)$$ si $lq^\kappa$ est ``beaucoup plus grand'' que $q^\rho$. Il devient nécessaire de reprendre en profondeur les arguments de \cite{RudinShapiro} pour espérer obtenir un résultat.

\smallskip

L'obtention du \cite[Theorem $1$]{RudinShapiro} se fait à l'aide de l'étude des sommes de type I et de type II.
L'adaptation de la technique utilis\'ee dans \cite{RudinShapiro} pour contr\^oler $S_I$, la somme de type I nécessite la définition suivante : pour $n$ un entier en consid\'erant l'\'ecriture $n = u+vq^\kappa$ {avec $0\leq u<q^\kappa$,} en posant $T_q(n) = l$, et $w = v \bmod q^{\rho}$, nous introduisons $n' = u+wq^{\kappa}+q^{\kappa+\rho}\lfloor q^{l-\rho}\rfloor $. Cette introduction permet de s\'eparer l'information digitale de multiplicative, ingrédient primordial dans la méthode de Mauduit et Rivat.

Pour $S_{II}$, l'introduction de la fonction doublement tronquée nécessite le Lemme \ref{Lemme Olivier Robert}, qui dit que si $m$ et $n$ sont des entiers, et $m'$ et $n'$ des petites perturbations de ces entiers, alors les produits respectifs auront le même nombre de chiffres. Il y a (au moins) deux manières différentes de démontrer ce lemme, l'une consistant en l'étude de la série de Fourier de la fonction $T_q(\cdot) := \left\lfloor \log (\cdot)/\log q \right\rfloor$, et elle nous a été suggérée par Olivier Robert, l'autre consiste à reprendre certaines idées évoquées dans \cite[Lemma $3.5$]{PrimesDigits}, et nous a été fournie par Thomas Stoll.

Ces introductions faites, plusieurs difficultés surgissent, mais leur résolution est plus de l'ordre technique et se fait à travers l'utilisation de résultats classiques (Kusmin-Landau, comportement en moyenne de la fonction $\tau$, \textit{etc.})

\medskip

Cet article est pr\'esent\'e comme suit. Les Parties \ref{Sec Prepa I} et \ref{Sec Prepa II} sont d\'edi\'ees \`a la collecte de r\'esultats utiles, respectivement, au traitement des sommes de type I et de type II. Ces r\'esultats sont de diff\'erentes natures (analytiques, digitaux et harmoniques) et leur d\'emonstration est assez diff\'erente du sch\'ema global de la preuve principale : les inclure nuirait \`a la compr\'ehension structurelle du traitement des sommes. Dans la Partie \ref{Section SI} nous exerçons un contr\^ole sur les sommes de type I, et dans la Partie \ref{Section SII}, sur les sommes de type II. Dans la Partie \ref{Sec Fin estim} nous r\'eunissons les r\'esultats des Parties \ref{Section SI} et \ref{Section SII} afin d'obtenir le Th\'eor\`eme \ref{Thm VM}. Dans la Partie \ref{Sec Choix de P}, nous r\'eunissons les diff\'erentes contraintes obtenues sur la fonction $P$ dans les parties pr\'ec\'edentes afin de d\'eterminer une croissance possible de $P$. Nous obtenons notamment que pour toute fonction \`a valeurs enti\`eres $P$, telle qu'il existe une constante strictement positive $c < 1/\log q$ satisfaisant à $P(x) \leq c\log x$ pour tout r\'eel $x$, la fonction $f_P(n) = e(\alpha a_P(n))$ v\'erifie un th\'eor\`eme des nombres premiers et un principe d'al\'ea de M\"obius pour tout $\alpha$ non entier. Dans la Partie \ref{Annexe 4}, nous montrons que les résultats présentés ici ne découlent pas de \cite{MuellnerAutomate} dans le sens où les suites traitées dans cet article ne sont pas automatiques. Enfin, dans la Partie \ref{Optim}, nous montrons que les résultats obtenus sont optimaux. Dans tout cet article, $P$ d\'esigne une fonction \`a valeurs enti\`eres.
\section{Travail pr\'eparatoire pour les sommes de type I}\label{Sec Prepa I}
L'essentiel des r\'esultats pr\'esent\'es ici servent \`a la Partie \ref{Section SI}. Le Lemme \ref{Lemme propa SI} sert aussi \`a la Partie \ref{Sec Prepa II}, il est de nature digitale. Le Lemme \ref{Lemme premiere estimation Fourier} revêt de l'analyse harmonique.

L'\'enonc\'e du lemme suivant est le r\'esultat final d'un (assez) long calcul vou\'e \`a apparaître plusieurs fois dans la Partie \ref{Section SI}.
\begin{lemma}\label{Lemme pour la somme de type I}
Soient $\mu$, $\nu$, $q \geq 2$ et $d$ des entiers. Soit $M$ un entier tel que $q^{\mu-1}\leq M < q^{\mu}$ et $\kappa_d$ un entier tel que $1 \leq \kappa_d \leq \frac{2}{3}(\mu+\nu)$  et $q^{\kappa_d-1} <M^2/d^2 \leq q^{\kappa_d}$. Consid\'erons enfin $\rho_1, h, u$ des entiers tels que  $0 \leq \rho_1 \leq \mu+\nu-\kappa_d$, $0 \leq h < q^{\rho_1}$ et $0 \leq u < q^{\kappa_d}$ ainsi que $\vartheta'$ un r\'eel. Nous posons pour $l$ un entier quelconque \begin{align*}
c_{\kappa_d,\rho_1,l}(u,h) = \frac{1}{q^{\rho_1}}\ds\sum_{w < q^{\rho_1}} f_P^{(\kappa_d+\rho_1)}&(u+wq^{\kappa_d}+q^{\kappa_d+\rho_1}\lfloor q^l/q^{\rho_1} \rfloor)\\ & \cdot \overline{f_P^{(\kappa_d+\rho_1)}(wq^{\kappa_d}+q^{\kappa_d+\rho_1}\lfloor q^l/q^{\rho_1} \rfloor)}e\left(- \frac{hw}{q^{\rho_1}}\right), 
\end{align*}
 et, si $(k,m)$ désigne le pgcd de $k$ et $m$ : $$S(M,d,l) = \ds\sum_{0 \leq h < q^{\rho_1}}\ds\sum_{\frac{M}{qd}\leq m' < \frac{M}{d}}\frac{1}{m'}\ds\sum_{\substack{0 \leq k' < m' \\ (k',m')=1}}\left|\ds\sum_{0 \leq u < q^{\kappa_d}}c_{\kappa_d,\rho_1,l}(u,h)e\left(-\frac{u\vartheta'}{q^{\mu+\nu}}+\frac{uk'}{m'}\right) \right|. $$ 
  Alors \begin{equation*}
|S(M,d,l)| \ll (\log q) q^{\rho_1/2+\kappa_d}.
\end{equation*}
\end{lemma}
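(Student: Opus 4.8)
The plan is to prove a bound a shade stronger than the one stated, namely $|S(M,d,l)|\ll(\log q)^{1/2}q^{\rho_1/2+\kappa_d}$, using three soft ingredients: the $L^2$-flatness in $h$ of the coefficients $c_{\kappa_d,\rho_1,l}(u,\cdot)$, the large sieve along Farey fractions, and a Cauchy--Schwarz in $h$. The first step is the flatness. For fixed $\kappa_d,\rho_1,l,u$, the sequence $\big(c_{\kappa_d,\rho_1,l}(u,h)\big)_{0\le h<q^{\rho_1}}$ is, up to the normalisation $q^{-\rho_1}$, the discrete Fourier transform on $\Z/q^{\rho_1}\Z$ of
$$w\longmapsto f_P^{(\kappa_d+\rho_1)}\!\big(u+wq^{\kappa_d}+q^{\kappa_d+\rho_1}\lfloor q^l/q^{\rho_1}\rfloor\big)\,\overline{f_P^{(\kappa_d+\rho_1)}\!\big(wq^{\kappa_d}+q^{\kappa_d+\rho_1}\lfloor q^l/q^{\rho_1}\rfloor\big)},$$
a function of modulus $1$ everywhere since $|f_P|\equiv1$. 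Parseval's identity (the same one recalled in Remarque~\ref{Remarque sur la fonction gammaP}) therefore gives $\sum_{0\le h<q^{\rho_1}}|c_{\kappa_d,\rho_1,l}(u,h)|^2=1$ for every $u$, and summing over $u$,
$$\sum_{0\le h<q^{\rho_1}}\ \sum_{0\le u<q^{\kappa_d}}\big|c_{\kappa_d,\rho_1,l}(u,h)\big|^2=q^{\kappa_d}.$$

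Next I would fix $h$ and write $\theta_{k',m'}:=-\vartheta'/q^{\mu+\nu}+k'/m'$ and $I(h,k',m'):=\sum_{0\le u<q^{\kappa_d}}c_{\kappa_d,\rho_1,l}(u,h)e(u\theta_{k',m'})$, a trigonometric polynomial of length $q^{\kappa_d}$. From the hypothesis $q^{\kappa_d-1}<M^2/d^2\le q^{\kappa_d}$, two distinct fractions $k'_1/m'_1\ne k'_2/m'_2$ with $M/(qd)\le m'_i<M/d$, $(k'_i,m'_i)=1$, differ by at least $1/(m'_1m'_2)>(d/M)^2\ge q^{-\kappa_d}$, and translating them all by the fixed real $-\vartheta'/q^{\mu+\nu}$ changes nothing; so the $\theta_{k',m'}$ form a $q^{-\kappa_d}$-separated family in $\R/\Z$, and the large sieve inequality yields
$$\sum_{M/(qd)\le m'<M/d}\ \sum_{\substack{0\le k'<m'\\(k',m')=1}}\big|I(h,k',m')\big|^2\ \ll\ q^{\kappa_d}\sum_{0\le u<q^{\kappa_d}}\big|c_{\kappa_d,\rho_1,l}(u,h)\big|^2.$$
Applying Cauchy--Schwarz first over $k'$ (for each $m'$) and then over $m'$, and using $\phi(m')/m'^2\le 1/m'$ together with $\sum_{M/(qd)\le m'<M/d}1/m'\le 1+\log q$ (the $m'$-range has ratio $q$), this gives
$$\sum_{m'}\frac1{m'}\sum_{\substack{k'\\(k',m')=1}}\big|I(h,k',m')\big|\ \ll\ (\log q)^{1/2}\,q^{\kappa_d/2}\Big(\sum_{0\le u<q^{\kappa_d}}\big|c_{\kappa_d,\rho_1,l}(u,h)\big|^2\Big)^{1/2}.$$

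It remains to sum over $h$. By Cauchy--Schwarz in $h$ and then the total from the first step,
$$|S(M,d,l)|\ \ll\ (\log q)^{1/2}q^{\kappa_d/2}\sum_{0\le h<q^{\rho_1}}\Big(\sum_{0\le u<q^{\kappa_d}}\big|c_{\kappa_d,\rho_1,l}(u,h)\big|^2\Big)^{1/2}\ \le\ (\log q)^{1/2}q^{\kappa_d/2}\cdot q^{\rho_1/2}\cdot q^{\kappa_d/2},$$
i.e.\ $|S(M,d,l)|\ll(\log q)^{1/2}q^{\rho_1/2+\kappa_d}$, which implies the claim. The size constraints $\kappa_d\le\frac23(\mu+\nu)$ and $\rho_1\le\mu+\nu-\kappa_d$ are not needed for this particular estimate.

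The genuinely delicate point is the large-sieve step: one has to read off the separation exponent $q^{-\kappa_d}$ precisely from $M^2/d^2\le q^{\kappa_d}$ — this is exactly why $\kappa_d$ is pinned down by $q^{\kappa_d-1}<M^2/d^2\le q^{\kappa_d}$ — and to arrange the two applications of Cauchy--Schwarz so that one loses only the harmonic sum $\sum 1/m'\asymp\log q$ rather than a power of $q$. Everything else is formal and uses nothing about $a_P$ beyond $|f_P|\equiv1$; in particular no carry-propagation analysis enters, consistent with the absence of any $\gamma_P$-saving in this estimate (those will arise only in the treatment of the type~II sums).
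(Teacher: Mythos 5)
Votre preuve est correcte et suit essentiellement la m\^eme d\'emarche que celle de l'article : in\'egalit\'e de Cauchy--Schwarz pour absorber les poids $1/m'$ (co\^ut $\log q$), Grand Crible sur les fractions de Farey $k'/m'$ dont l'espacement $\geq d^2/M^2 \geq q^{-\kappa_d}$ provient exactement de l'encadrement $q^{\kappa_d-1}<M^2/d^2\leq q^{\kappa_d}$, puis Parseval en $h$ donnant $\sum_h|c_{\kappa_d,\rho_1,l}(u,h)|^2=1$ gr\^ace \`a $|f_P|\equiv 1$. La seule diff\'erence est l'ordre des applications de Cauchy--Schwarz (l'article fait une seule application globale sur $(h,m',k')$ avant le Grand Crible), ce qui conduit \`a la m\^eme majoration en $(\log q)^{1/2}q^{\rho_1/2+\kappa_d}$.
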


\begin{proof}
Nous assemblons des parties \'eparses de la d\'emonstration des sommes de type I de \cite{RudinShapiro}.

Pour commencer \begin{equation}\label{Equa 1/M'}
\ds\sum_{\frac{M}{qd}\leq m' < \frac{M}{d}}\ds\sum_{\substack{0 \leq k' < m' \\ (k',m')=1}}\frac{1}{m'^2} \leq \ds\sum_{\frac{M}{qd}\leq m' < \frac{M}{d}}\frac{1}{m'} \ll \log q,
\end{equation} donc par l'in\'egalit\'e de Cauchy-Schwarz, nous avons $$|S(M,d,l)|^2 \ll (\log q)q^{\rho_1}\ds\sum_{0 \leq h < q^{\rho_1}}\ds\sum_{\frac{M}{qd}\leq m' < \frac{M}{d}}\ds\sum_{\substack{0 \leq k' < m' \\ (k',m')=1}}\left|\ds\sum_{0 \leq u < q^{\kappa_d}}c_{\kappa_d,\rho_1,l}(u,h)e\left(-\frac{u\vartheta'}{q^{\mu+\nu}}+\frac{uk'}{m'}\right) \right|^2. $$ Nous utilisons alors l'in\'egalit\'e du Grand Crible pour affirmer 
\begin{equation*}\label{equa 1 lemme 1}
|S(M,d,l)|^2 \ll (\log q)q^{\rho_1}\ds\sum_{0 \leq h < q^{\rho_1}}\left(q^{\kappa_d}+\frac{M^2}{d^2} \right)\ds\sum_{0 \leq u < q^{\kappa_d}}|c_{\kappa_d,\rho_1,l}(u,h)|^2.
\end{equation*} Cependant, par orthogonalité des caractères, 
\begin{align*}
& \ds\sum_{0 \leq h < q^{\rho_1}}|c_{\kappa_d,\rho_1,l}(u,h)|^2
\\ & = \frac{1}{q^{\rho_1}}\ds\sum_{0 \leq w < q^{\rho_1}}\Big|f_P^{(\kappa_d+\rho_1)}(wq^{\kappa_d}+q^{\kappa_d+\rho_1}\lfloor q^l/q^{\rho_1} \rfloor)\Big|^2 \Big|f_P^{(\kappa_d+\rho_1)}(u+wq^{\kappa_d}+q^{\kappa_d+\rho_1}\lfloor q^l/q^{\rho_1} \rfloor ) \Big|^2
\end{align*}
qui vaut $1$ par d\'efinition de $f_P$. Ainsi \begin{align*}
|S(M,d,l)|^2 \ll (\log q)q^{\rho_1}\left(q^{\kappa_d}+\frac{M^2}{d^2} \right)q^{\kappa_d},
\end{align*}  et comme $q^{\kappa_d-1} < M^2/d^2 \leq q^{\kappa_d}$ le r\'esultat est obtenu.
\end{proof}

Nous d\'emontrons \`a pr\'esent un pendant de la propri\'et\'e de propagation \cite[Definition $1$]{RudinShapiro}.

\begin{lemma}\label{Lemme propa SI}
 Soient $\rho, \lambda$ et $\kappa$ des entiers tels que $P(\lambda+\kappa +1)\leq \rho$ et $\rho \leq \frac{3}{4}\lambda$. Soit  $B$ l'ensemble des $l$ tels que $0 \leq l < q^{\lambda}$ tels qu'il existe $0 \leq k_1,k_2 < q^{\kappa}$ tels que \begin{equation*}\label{Equation propagation taille changeante}
a_P(lq^{\kappa}+k_1+k_2)- a_P(lq^{\kappa}+k_1) \neq a_P^{(\kappa + \rho)}(lq^{\kappa}+k_1+k_2) - a_P^{(\kappa + \rho)}(lq^{\kappa}+k_1).
\end{equation*}
Alors \begin{equation*}
\# B \ll  q^{\lambda-\rho+P\left(\lambda+\kappa + 1\right)} .
\end{equation*}
\end{lemma}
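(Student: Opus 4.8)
The goal is to bound the number of "bad" indices $l < q^\lambda$ for which the truncation at level $\kappa+\rho$ fails to commute with the block-counting difference $a_P(lq^\kappa+k_1+k_2) - a_P(lq^\kappa+k_1)$ for some $k_1,k_2 < q^\kappa$. The plan is to exploit the locality of the block-counting function: a single digit $\epsilon_i$ of an integer $m$ influences $a_P(m)$ only through the $P(T_q(m))+1$ consecutive digits $\epsilon_{i+P(T_q(m))}(m)\cdots\epsilon_i(m)$, hence only through blocks starting in positions $i-P(T_q(m)),\dots,i$. First I would observe that the integers $lq^\kappa+k_1$ and $lq^\kappa+k_1+k_2$ share the same digits as $lq^\kappa$ above position roughly $\kappa+1$ (carries from adding $k_1+k_2 < 2q^\kappa$ can propagate upward, but only so far), and similarly $a_P^{(\kappa+\rho)}$ only sees digits below position $\kappa+\rho$. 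Thus the difference $a_P - a_P^{(\kappa+\rho)}$ on either argument depends only on the blocks of length $P(\cdot)+1$ that straddle position $\kappa+\rho$; these are governed by the digits in an interval of length about $P(\lambda+\kappa+1)$ around position $\kappa+\rho$.

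The key step is a carry-control argument. For the block-difference identity to hold it suffices that adding $k_1$ (resp. $k_1+k_2$) to $lq^\kappa$ produces no carry reaching the "window" near position $\kappa+\rho$, AND that $T_q$ of all four integers involved takes a value whose $P$-image is $\le \rho$, so that no block of the relevant length reaches across position $\kappa+\rho$ into the part of $l$ that matters. Since $k_1,k_2 < q^\kappa$, a carry from the addition can only be triggered if the digits of $lq^\kappa$ in positions $\kappa,\kappa+1,\dots$ up to the window are all equal to $q-1$; the number of $l < q^\lambda$ whose base-$q$ expansion has a run of $q-1$'s of length $\rho - P(\lambda+\kappa+1)$ (the gap between position $\kappa$ and the bottom of the window, after accounting for the block length) starting at position $0$ is $\ll q^{\lambda - (\rho - P(\lambda+\kappa+1))} = q^{\lambda - \rho + P(\lambda+\kappa+1)}$. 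The hypothesis $P(\lambda+\kappa+1) \le \rho$ guarantees this window fits below $\kappa+\rho$ inside the $l$-part (one uses $\rho \le \tfrac34\lambda$ to ensure all the position indices involved stay within $[0,\lambda+\kappa]$, so that $T_q$ of the four integers equals $\lambda+\kappa$ or $\lambda+\kappa+1$ and $P$ of it is $\le \rho$ by monotonicity of $P$).

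So the concrete steps are: (i) fix $l$ and $k_1,k_2$; write each of $lq^\kappa+k_1$ and $lq^\kappa+k_1+k_2$ and its reduction mod $q^{\kappa+\rho}$, and expand $a_P$ minus $a_P^{(\kappa+\rho)}$ as a sum over blocks straddling position $\kappa+\rho$; (ii) show that if the digits of $l$ in positions $0,1,\dots,\rho-1-P(\lambda+\kappa+1)$ are not all $q-1$ — equivalently no carry from $k_1+k_2$ propagates into the window $[\,\kappa+\rho - P(\lambda+\kappa+1),\ \kappa+\rho + P(\lambda+\kappa+1)\,]$ — then the straddling-block contributions to the two sides agree digit by digit and the identity holds, so $l\notin B$; (iii) conclude $B$ is contained in the set of $l<q^\lambda$ with a prescribed block of $q-1$'s of length $\rho - P(\lambda+\kappa+1)$, which has cardinality $\ll q^{\lambda-\rho+P(\lambda+\kappa+1)}$. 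The main obstacle is bookkeeping in step (ii): one must carefully track which blocks of length $P(T_q(\cdot))+1$ are affected by a carry versus by the truncation, and verify that the common value of $P(T_q(\cdot))$ across all four integers (secured by $\rho\le\tfrac34\lambda$ and monotonicity of $P$) makes the straddling windows on the two sides literally coincide; the counting in step (iii) is then routine.
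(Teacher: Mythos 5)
Your main counting step is the same route as the paper's: the paper also reduces to a carry-propagation count (it invokes the classical technique of \cite[Section $11$]{RudinShapiro}, \cite[Proposition $4.2$]{Hanna1} to get $O(q^{\lambda-\rho+P(y)})$ at fixed block length and then uses the monotonicity of $P$ to replace $P(y)$ by $P(\lambda+\kappa+1)$), and your identification of the bad $l$ as those whose low digits form a run of $q-1$'s of length about $\rho-P(\lambda+\kappa+1)$ is exactly that argument made explicit.

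There is, however, one step in your step (ii) that does not hold as you justify it: the parenthetical claim that $T_q$ of the four integers equals $\lambda+\kappa$ or $\lambda+\kappa+1$, ``secured by $\rho\le\frac34\lambda$ and monotonicity of $P$''. For a generic $l$ one has $T_q(lq^{\kappa}+k_1)=T_q(l)+\kappa$, which can be anything between $\kappa$ and $\lambda+\kappa-1$, and nothing in the hypothesis $\rho\le\frac34\lambda$ prevents $T_q(lq^{\kappa}+k_1+k_2)\neq T_q(lq^{\kappa}+k_1)$. When these two values differ, the block lengths $P(T_q(\cdot))$ used in $a_P$ (and in $a_P^{(\kappa+\rho)}$, which by definition uses $T_q(n)$, not $T_q(n\bmod q^{\kappa+\rho})$) differ between the two integers, so your ``straddling windows coincide digit by digit'' argument collapses for such $l$; only the bound $P(T_q(\cdot))\le P(\lambda+\kappa+1)\le\rho$ follows from monotonicity, not a common value. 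The paper handles this case separately and first: a jump of $T_q$ forces $lq^{\kappa}+k_1<q^{m}\le lq^{\kappa}+k_1+k_2$, hence $l\in\{0,q-1,q^{2}-1,\dots,q^{\lambda}-1\}$, i.e.\ at most $\lambda+1$ values of $l$, and it is precisely to absorb this $\lambda+1$ into $q^{\lambda-\rho+P(\lambda+\kappa+1)}$ that the hypothesis $\rho\le\frac34\lambda$ is used (giving $\lambda+1\ll q^{\lambda/4}\le q^{\lambda-\rho}$) --- not to pin down $T_q$. So your proposal needs this extra case analysis (or the observation that such exceptional $l$ either already lie in your run set or give integers below $q^{\kappa+\rho}$, where the truncation is trivial); with it, the argument closes and coincides with the paper's proof.
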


\begin{proof}
Nous rappelons qu'ici \textit{a priori} \begin{align*}
a_P^{(\rho)}(n)& = \ds\sum_{i \geq 0}\epsilon_{i + T_q(n)}(n \bmod q^\rho)\cdots\epsilon_i(n\bmod q^\rho)
\\ &  \neq \ds\sum_{i \geq 0}\epsilon_{i + T_q(n \bmod q^{\rho})}(n \bmod q^\rho)\cdots\epsilon_i(n\bmod q^\rho).
\end{align*} 

Commen\c cons par montrer que le nombre de $l< q^{\lambda}$ tels que $T_q(lq^{\kappa}+k_1+k_2) \neq T_q(lq^{\kappa}+k_1)$ est major\'e par $\lambda + 1$. En effet, cela n'arrive que s'il existe un $m$ de sorte que $lq^{\kappa}+k_1 < q^m < lq^{\kappa}+k_1+k_2$. Ceci implique $0 \leq l < q^{m-\kappa} < l+2$ et donc $l = q^{m-\kappa}-1$ si $m \geq \kappa$ et $l = 0$ si $m \leq \kappa$. Les valeurs de $l$ possibles sont donc $\{0,q-1,q^2-1, \ldots , q^\lambda -1\}$ soient $\lambda + 1$ valeurs.

Nous pouvons \`a pr\'esent supposer que $T_q(lq^{\kappa}+k_1+k_2) = T_q(lq^{\kappa}+k_1)$ quels que soient $l, k_1,k_2$. 
Des techniques classiques sur les retenues (voir par exemple \cite[Section $11$]{RudinShapiro} ou \cite[Proposition $4.2$]{Hanna1}) montrent qu'alors, à $y$ fixé, le cardinal à estimer est $O(q^{\lambda - \rho + P(y)})$, nous exploitons la croissance de $P$ pour affirmer  \begin{align*}
\#\mathcal{B} \ll q^{\lambda - \rho + P(\lambda + \kappa + 1)} + \lambda + 1
\end{align*} et nous concluons la preuve en utilisant $\rho \leq 3 \lambda/4$.
\end{proof}


\begin{lemma}\label{Lemme premiere estimation Fourier} Soient $l \geq 0$ et $\kappa$ des entiers tel que $P(\kappa+l) \leq l$, alors uniform\'ement en $t \in \R$:
$$\left|\ds\sum_{q^{l-1}\leq u < q^l}f_P(uq^\kappa,T_q(uq^\kappa))e(-ut)\right| \leq q^{\gamma (l,\kappa)}, $$ avec \begin{equation}
\gamma (l,\kappa) := l\frac{\log \left(q^{P(\kappa+l)}-8\left(\sin  \frac{\pi |\!|\alpha|\!|}{4} \right)^2\right)}{P(\kappa+l)\log q} .
\end{equation}
\end{lemma}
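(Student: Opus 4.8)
The aim is to transpose, to a block length growing like $P(\kappa+l)$, the Fourier estimate underlying \cite[equation $(25)$]{RudinShapiro} --- the inequality recalled in Remark \ref{Remarque sur la fonction gammaP} is exactly its counterpart --- so the argument of \cite{RudinShapiro} can be run with the constant block length replaced by $P(\kappa+l)$. First one reduces digitally: for $q^{l-1}\le u<q^l$ one has $T_q(uq^\kappa)=\kappa+T_q(u)=\kappa+l-1$, and multiplying by $q^\kappa$ merely prepends $\kappa$ zero digits, which lie in no digital block; hence $a_P(uq^\kappa,y)=a_P(u,y)$ for every $y$, and the quantity to estimate is
\[
G(t):=\ds\sum_{q^{l-1}\le u<q^l} e\!\left(\alpha\, a_P(u,\kappa+l-1)\right)e(-ut).
\]
Put $d:=P(\kappa+l-1)$; by monotonicity of $P$ and the hypothesis, $1\le d\le P(\kappa+l)\le l$, so a block of length $d$ genuinely fits inside an $l$-digit integer. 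With $g(\delta):=\log\!\bigl(q^{\delta}-8(\sin\tfrac{\pi\|\alpha\|}{4})^2\bigr)/(\delta\log q)$ it suffices to prove $|G(t)|\le q^{l\,g(d)}$: an elementary computation gives $g'(\delta)\ge\log\!\bigl(q^{\delta}/(q^{\delta}-8(\sin\tfrac{\pi\|\alpha\|}{4})^2)\bigr)/(\delta\log q)^2\ge0$, so $g$ is nondecreasing on $[1,\infty)$ and replacing $d$ by $P(\kappa+l)\ge d$ only weakens the bound, yielding exactly $q^{\gamma(l,\kappa)}$.

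To bound $|G(t)|$ I would follow the scheme of \cite{RudinShapiro}. Cut the base-$q$ expansion of $u$ into consecutive super-blocks of $d$ digits; since the windows defining $a_P(u,\cdot)$ slide across boundaries, one tracks the running value of $a_P$ by a transfer over the $(d-1)$-digit overlap of two successive super-blocks. Up to the constraint $\epsilon_{l-1}(u)\ne0$ and the two partial super-blocks at the ends (handled by trivial bounds), this writes $G(t)$ as $\langle\mathbf{1},\,M_{\lfloor l/d\rfloor}\cdots M_{1}\,v\rangle$, a product of $q^{d-1}\times q^{d-1}$ matrices in which the $r$-th super-block carries the twist $u_r\mapsto e(-u_rq^{rd}t)$ on its new digits. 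The key point is that this twist acts as right multiplication by a diagonal unitary, so each factor has operator norm independent of $t$, equal to that of the untwisted $d$-step transfer; uniformity in $t$ is thereby obtained for free, and everything reduces to the single spectral inequality $\|M^{d}\|\le q^{d}-8(\sin\tfrac{\pi\|\alpha\|}{4})^2$ for the one-digit transfer matrix $M$.

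This spectral estimate is the main obstacle and the source of the long computation, since $M^{d}$ is an exponentially large matrix and its norm must be extracted from its structure. The gain comes from the fact that $M$ differs from the unweighted transfer $M_{0}$ --- for which $\|M_{0}\|=q$, with $M_{0}\mathbf{1}=q\mathbf{1}$ --- only by phases $e(\alpha k)$, $k$ a positive integer, appearing in the entries where the relevant digit-product is non-zero; a structured perturbation analysis of $M^{*}M$ around $M_{0}^{*}M_{0}$ (whose top eigenvalue is $q^{2}$), together with the elementary identity $\bigl|\sum_{j\le N}z_{j}\bigr|^{2}=N^{2}-\sum_{j<k}|z_{j}-z_{k}|^{2}$ for unit complex numbers and the relation $|1+e(\beta)|=2\cos\pi\|\beta\|$, produces the deficit $8(\sin\tfrac{\pi\|\alpha\|}{4})^{2}$ (in the case $q=2$, $d=2$ one checks directly $\|M\|^{2}=4\cos^{2}\tfrac{\pi\|\alpha\|}{2}\le q^{2}-8(\sin\tfrac{\pi\|\alpha\|}{4})^{2}$). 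Inserting $\|M^{d}\|\le q^{d}-8(\sin\tfrac{\pi\|\alpha\|}{4})^{2}$ into the product over the $\lfloor l/d\rfloor$ super-blocks and disposing of the boundary factors gives $|G(t)|\le\bigl(q^{d}-8(\sin\tfrac{\pi\|\alpha\|}{4})^{2}\bigr)^{l/d}=q^{l\,g(d)}$, and the monotonicity step above concludes. (When $d=1$, $a_P$ is a digit sum and this is the classical uniform estimate for exponential sums twisted by $e(\alpha s_q(n))$.)
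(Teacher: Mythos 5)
Your proposal is correct and takes essentially the same route as the paper: one first observes that $T_q(uq^\kappa)=\kappa+l-1$ is constant on $q^{l-1}\le u<q^l$ (and that the $\kappa$ prepended zeros do not affect $a_P$), so the block length is fixed, and then one applies the fixed-length Fourier estimate for $\beta$-r\'ecursive counting functions from \cite{Hanna1} (vecteurs g\'en\'ealogiques), which is exactly the transfer-matrix argument you re-sketch, with the spectral bound $q^{\beta}-8\bigl(\sin\frac{\pi|\!|\alpha|\!|}{4}\bigr)^2$ as the key input. Your explicit monotonicity of $\delta\mapsto\bigl(q^{\delta}-8(\sin\frac{\pi|\!|\alpha|\!|}{4})^2\bigr)^{1/\delta}$, used to pass from $P(\kappa+l-1)$ to $P(\kappa+l)$, is a point the paper leaves implicit.
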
 

\begin{proof}

Nous commençons par remarquer que, \`a $\kappa$ fix\'e, $T_q(uq^{\kappa})$ est constante sur $q^{l-1}\leq u < q^l$ et donc $P(T_q(uq^{\kappa})) = P(\kappa+l-1)$. Soit donc $\beta = P(\kappa + l-1)$ et $V_u$ le $u$-i\`eme vecteur g\'en\'ealogique li\'e \`a la fonction $\beta$-r\'ecursive $a(n)=\sum \epsilon_{i+\beta}(n)\cdots \epsilon_i(n)$ (\cite[Définitions $2.1$ et $5.4$]{Hanna1}); alors, de mani\`ere similaire à \cite{Hanna1}, nous obtenons \begin{align*}
\left|\ds\sum_{q^{l-1}\leq u < q^l}f_P(uq^\kappa,T_q(uq^\kappa))e\left(-ut\right)\right|  \leq  \left(q^{P(l+\kappa)}-8\left(\sin  \frac{\pi |\!|\alpha|\!|}{4} \right)^2 \right)^{ \ds\frac{l}{P(\kappa+l)} },
\end{align*} ce qui ach\`eve la preuve.
\end{proof}



\section{Sommes de type I}\label{Section SI}

Soient $1 \leq M \leq N$ tels que 
\begin{equation}\label{Equa M SI}
M \leq (MN)^{1/3}
\end{equation} et $\mu$ et $\nu$ les uniques entiers tels que $q^{\mu-1} \leq M < q^{\mu}$ et $q^{\nu-1} \leq N < q^{\nu}$.

Soient \'egalement $\vartheta \in \R$ et $I:=I(M,N) \subset [0,MN]$ un intervalle. Nous posons alors \begin{equation}\label{def SI}
S_I(\vartheta) := \ds\sum_{M/q < m \leq M}\left|\ds\sum_{ \substack{n : \\ mn \in I(M,N)}}f_P(mn)e(\vartheta mn) \right|.
\end{equation}

\begin{theorem}
Soit $P$ tel que 
\begin{equation}\label{Condition sur P SI}
P(\mu+\nu+1) \leq \gamma_P\left(\frac{1}{3}(\mu + \nu), \, \mu+\nu\right),
\end{equation}
alors nous avons uniform\'ement en $\vartheta \in \R$ :
\begin{equation*}
S_I(\vartheta) \ll (\mu+\nu)^3(\log q)^3q^{\mu+\nu- \frac{1}{4}\gamma_P(\frac{1}{3}(\mu + \nu), \, \mu+\nu)},
\end{equation*} où $\gamma_P(k,l)$ est d\'efinie en \eqref{Forme gamma deuxieme prop fourier}.
\end{theorem}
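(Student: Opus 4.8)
Le plan est de reprendre le traitement des sommes de type~I de Mauduit et Rivat \cite{RudinShapiro}, en l'adaptant au fait que la longueur $P(T_q(\cdot))$ des blocs varie avec l'entier. On proc\'edera en quatre temps : on compl\`ete l'intervalle $I(M,N)$ et on d\'etecte la condition multiplicative par l'orthogonalit\'e des caract\`eres additifs ; on s\'epare l'information digitale de l'information arithm\'etique via l'entier modifi\'e de la Partie~\ref{Chap 2 Panorama}, la propagation des retenues \'etant contr\^ol\'ee par le Lemme~\ref{Lemme propa SI} ; un d\'eveloppement de Fourier dans la variable des chiffres interm\'ediaires fait appara\^{\i}tre exactement les $c_{\kappa_d,\rho_1,l}(u,h)$ et la quantit\'e $S(M,d,l)$ du Lemme~\ref{Lemme pour la somme de type I}, tandis que la somme sur les chiffres de t\^ete est major\'ee par le Lemme~\ref{Lemme premiere estimation Fourier} ; on recolle enfin et on optimise un param\`etre de troncature $\rho_1$. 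Les deux nouveaut\'es par rapport au cas de longueur constante sont la pr\'esence de $P(\mu+\nu+1)$ \`a la place d'une constante dans le Lemme~\ref{Lemme propa SI}, et la n\'ecessit\'e de suivre $T_q$ \`a chaque \'etape (la modification de l'entier devant pr\'eserver $T_q$).

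\emph{Mise en forme et param\`etres.} On d\'etecte d'abord $I$ par une compl\'etion usuelle (enveloppes lisses et leurs coefficients de Fourier), au prix d'un facteur $\ll(\mu+\nu)\log q$. Pour $m$ fix\'e, l'entier $r=mn$ d\'ecrit les multiples de $m$ dans $I$ ; on l'\'ecrit $r=u+vq^{\kappa_d}$ avec $0\le u<q^{\kappa_d}$, o\`u $\kappa_d$ v\'erifie $q^{\kappa_d-1}<M^2/d^2\le q^{\kappa_d}$, d'o\`u $1\le\kappa_d\le\tfrac23(\mu+\nu)$ puisque $1\le M^2/d^2\le M^2\le(MN)^{2/3}<q^{\frac23(\mu+\nu)}$ par \eqref{Equa M SI}. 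On pose $\lambda=\mu+\nu-\kappa_d\ge\tfrac13(\mu+\nu)$ et on choisit un entier $\rho_1$ tel que
\[
\max\!\left(\tfrac12\gamma_P\bigl(\tfrac13(\mu+\nu),\mu+\nu\bigr),\ P(\mu+\nu+1)+\tfrac14\gamma_P\bigl(\tfrac13(\mu+\nu),\mu+\nu\bigr)\right)\le\rho_1\le\tfrac34\lambda.
\]
Un tel $\rho_1$ existe : \eqref{Condition sur P SI} donne $P(\mu+\nu+1)\le\gamma_P(\tfrac13(\mu+\nu),\mu+\nu)$ et la Remarque~\ref{Remarque sur la fonction gammaP} donne $\gamma_P(\tfrac13(\mu+\nu),\mu+\nu)\le\tfrac12\lambda$, donc le membre de gauche est $\le\tfrac54\gamma_P(\tfrac13(\mu+\nu),\mu+\nu)\le\tfrac58\lambda<\tfrac34\lambda$ ; de plus $P(\mu+\nu+1)\le\rho_1\le\lambda$, si bien que les Lemmes~\ref{Lemme propa SI} et~\ref{Lemme pour la somme de type I} s'appliquent. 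Le Lemme~\ref{Lemme propa SI} (avec $k_1=0$) donne, pour toute partie haute $v$ hors d'un ensemble de cardinal $\ll q^{\mu+\nu-\kappa_d-\rho_1+P(\mu+\nu+1)}$, l'\'egalit\'e $f_P(r)=f_P(vq^{\kappa_d})\,f_P^{(\kappa_d+\rho_1)}(r)\,\overline{f_P^{(\kappa_d+\rho_1)}(vq^{\kappa_d})}$ ; comme il n'y a que $\ll 1+q^{\kappa_d}/m$ multiples de $m$ dans un bloc $v$ fix\'e, la contribution de cet ensemble exceptionnel, somm\'ee sur $m$, est $\ll(\log q)\,q^{\mu+\nu-\rho_1+P(\mu+\nu+1)}$ \`a un terme plus petit pr\`es (o\`u intervient \`a nouveau \eqref{Equa M SI}). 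On d\'etecte ensuite $m\mid r$ par les caract\`eres additifs modulo $m$ et on regroupe les fr\'equences selon $d=(j,m)$, avec $m=dm'$, $j=dk'$, $(k',m')=1$ : \`a ces erreurs admissibles pr\`es, $S_I(\vartheta)$ est major\'ee par
\[
(\mu+\nu)(\log q)\sum_{1\le d\le M}\frac1d\sum_{\frac M{qd}<m'\le\frac Md}\frac1{m'}\sum_{\substack{0\le k'<m'\\(k',m')=1}}\left|\sum_{r\in I}f_P(vq^{\kappa_d})\,f_P^{(\kappa_d+\rho_1)}(r)\,\overline{f_P^{(\kappa_d+\rho_1)}(vq^{\kappa_d})}\;e\!\left(\Bigl(\vartheta+\tfrac{k'}{m'}\Bigr)r\right)\right|,
\]
la somme int\'erieure portant maintenant sur tous les $r\in I$.

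\emph{Estimation centrale.} Dans cette somme on \'ecrit $v=w+q^{\rho_1}\lfloor v/q^{\rho_1}\rfloor$, $w=v\bmod q^{\rho_1}$, et on remplace $v$ par la partie haute modifi\'ee $\tilde v$ (chiffres de $w$, puis des z\'eros, puis un chiffre de t\^ete \`a la position $T_q(v)$), ce qui pr\'eserve $T_q(r)$, donc $f_P^{(\kappa_d+\rho_1)}(r)$ et $f_P^{(\kappa_d+\rho_1)}(vq^{\kappa_d})$. On d\'eveloppe alors en s\'erie de Fourier en $w\in\Z/q^{\rho_1}\Z$ la fonction $w\mapsto f_P^{(\kappa_d+\rho_1)}(u+wq^{\kappa_d}+\tau_l)\,\overline{f_P^{(\kappa_d+\rho_1)}(wq^{\kappa_d}+\tau_l)}$, o\`u $l=T_q(v)+1$ (de sorte que $q^{l-1}\le v<q^l$) et $\tau_l=q^{\kappa_d+\rho_1}\lfloor q^{l}/q^{\rho_1}\rfloor$ : ses coefficients sont, par d\'efinition, les $c_{\kappa_d,\rho_1,l}(u,h)$. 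En d\'ecoupant la somme sur $\lfloor v/q^{\rho_1}\rfloor$ suivant les $\le\lambda$ valeurs de $l$, la somme int\'erieure se scinde, pour chaque $l$ et $h$, en le produit de $\sum_u c_{\kappa_d,\rho_1,l}(u,h)\,e\bigl((\vartheta+\tfrac{k'}{m'})u\bigr)$ (la fr\'equence venue de la compl\'etion de $I$ \'etant absorb\'ee dans $\vartheta$) et de $\sum_{q^{l-1}\le v<q^l}f_P(vq^{\kappa_d})\,e(\eta v)$ ; par le Lemme~\ref{Lemme premiere estimation Fourier}, cette derni\`ere est $\ll q^{\gamma(l,\kappa_d)}=q^{\,l-\gamma_P(l,\kappa_d+l)}$ (en vertu de \eqref{Forme gamma deuxieme prop fourier}) d\`es que $P(\kappa_d+l)\le l$, et triviale ($\ll q^l$) sinon, cas o\`u $l<P(\mu+\nu)$ et que l'on traite globalement via la majoration de $S(M,d,l)$ uniforme en~$l$. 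Apr\`es sommation sur $m',k',h$, la contribution de $l$ est donc $\ll q^{\,l-\gamma_P(l,\kappa_d+l)}\,S(M,d,l)$, et le Lemme~\ref{Lemme pour la somme de type I} donne $S(M,d,l)\ll(\log q)\,q^{\rho_1/2+\kappa_d}$.

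\emph{Recollement.} Puisque $l\mapsto l-\gamma_P(l,\kappa_d+l)$ est croissante et $\gamma_P(\cdot,\mu+\nu)$ aussi, $\sum_{1\le l\le\lambda}q^{\,l-\gamma_P(l,\kappa_d+l)}\ll(\mu+\nu)\,q^{\lambda-\gamma_P(\lambda,\mu+\nu)}\ll(\mu+\nu)\,q^{\mu+\nu-\kappa_d-\gamma_P(\frac13(\mu+\nu),\mu+\nu)}$. En absorbant le facteur $\ll M/d$ de la somme sur $m',k'$, le facteur $\sum_{d\le M}1/d\ll(\mu+\nu)\log q$, le facteur $\mu+\nu$ ci-dessus et celui de la compl\'etion, on obtient
\[
S_I(\vartheta)\ \ll\ (\mu+\nu)^3(\log q)^3\Bigl(q^{\,\mu+\nu+\rho_1/2-\gamma_P(\frac13(\mu+\nu),\mu+\nu)}\ +\ q^{\,\mu+\nu-\rho_1+P(\mu+\nu+1)}\Bigr),
\]
le second terme provenant de l'ensemble exceptionnel. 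Le choix de $\rho_1$ rend chacun des deux exposants $\le\mu+\nu-\tfrac14\gamma_P(\tfrac13(\mu+\nu),\mu+\nu)$ : pour le premier, $\rho_1/2\le\tfrac34\gamma_P(\tfrac13(\mu+\nu),\mu+\nu)$ (gr\^ace \`a $\rho_1\le\tfrac34\lambda$, $\gamma_P\le\lambda/2$ et \eqref{Condition sur P SI}) ; pour le second, $\rho_1\ge P(\mu+\nu+1)+\tfrac14\gamma_P(\tfrac13(\mu+\nu),\mu+\nu)$. On obtient ainsi l'\'enonc\'e. La difficult\'e principale est double : v\'erifier soigneusement, dans l'estimation centrale, que la modification $v\mapsto\tilde v$ pr\'eserve $T_q$ et que le d\'eveloppement de Fourier reconstitue exactement les $c_{\kappa_d,\rho_1,l}$, tout en isolant les petites valeurs de $l$ ; et, ce qui impose l'hypoth\`ese \eqref{Condition sur P SI}, s'assurer que $\rho_1$ peut \^etre choisi assez grand pour rendre n\'egligeable l'ensemble exceptionnel de propagation des retenues sans violer la contrainte $\rho_1\le\tfrac34\lambda$.
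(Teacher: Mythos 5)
Votre architecture est pour l'essentiel celle de l'article (compl\'etion, d\'ecoupage $r=u+vq^{\kappa_d}$ avec $q^{\kappa_d-1}<M^2/d^2\le q^{\kappa_d}$, troncature via le Lemme~\ref{Lemme propa SI}, entier modifi\'e pr\'eservant $T_q$, d\'eveloppement de Fourier en $w$ donnant les $c_{\kappa_d,\rho_1,l}$, puis Lemmes~\ref{Lemme pour la somme de type I} et~\ref{Lemme premiere estimation Fourier} et optimisation d'un param\`etre), mais votre traitement du terme d'erreur de propagation des retenues contient une incoh\'erence qui est un vrai trou. Vous retirez l'ensemble exceptionnel \emph{avant} la d\'etection de $m\mid r$ par les caract\`eres additifs, en le majorant trivialement gr\^ace au fait qu'un bloc de longueur $q^{\kappa_d}$ ne contient que $\ll 1+q^{\kappa_d}/m$ multiples de $m$ ; or la troncature que vous retirez est faite \`a l'\'echelle $\kappa_d+\rho_1$ avec $d=(j,m)$, un param\`etre qui n'existe qu'\emph{apr\`es} l'introduction des fr\'equences $j$. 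Les deux options coh\'erentes sont : (a) faire comme l'article, c'est-\`a-dire d\'ecouper apr\`es les caract\`eres, par classe de $d$ --- mais alors la majoration triviale de l'ensemble exceptionnel ne suffit plus, car elle se voit multipli\'ee par le nombre de fr\'equences $k/m$ (un facteur d'ordre $M$), et c'est pr\'ecis\'ement pourquoi l'article consacre au terme $S_{I,2}'$ une estimation par Cauchy--Schwarz et Grand Crible (\'equations \eqref{petite propa pour SI}--\eqref{Majoration finale SI2}), \'etape totalement absente de votre proposition ; ou (b) fixer une seule \'echelle $\kappa$ ind\'ependante de $d$ (par exemple $\kappa_1$, licite par \eqref{Equa M SI}) avant les caract\`eres --- ce qui peut marcher, mais alors le Lemme~\ref{Lemme pour la somme de type I} ne s'applique plus tel quel pour $d>1$ (son \'enonc\'e lie $\kappa_d$ \`a $d$ ; seule l'in\'egalit\'e $M^2/d^2\le q^{\kappa}$ est r\'eellement utilis\'ee dans sa preuve, mais il faudrait le dire et le red\'emontrer). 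En l'\'etat, votre preuve m\'elange les deux ordres et le terme d'erreur n'est donc pas contr\^ol\'e.

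Deux points secondaires : la justification entre parenth\`eses de $\rho_1/2\le\tfrac34\gamma_P(\tfrac13(\mu+\nu),\mu+\nu)$ ne d\'ecoule pas de $\rho_1\le\tfrac34\lambda$ et $\gamma_P\le\lambda/2$ (ces in\'egalit\'es vont dans le mauvais sens) ; elle est vraie si l'on prend effectivement $\rho_1$ \'egal \`a sa borne inf\'erieure, qui est $\le\tfrac54\gamma_P$, ce qu'il faut dire explicitement. Enfin, la contribution des petits $l$ (ceux avec $P(\kappa_d+l)>l$, o\`u le Lemme~\ref{Lemme premiere estimation Fourier} ne s'applique pas) est annonc\'ee comme trait\'ee \og globalement \fg{} mais jamais v\'erifi\'ee : elle se majore par $\ll P(\mu+\nu)(\log q)\,q^{P(\mu+\nu)+\rho_1/2+\kappa_d}$, et il faut utiliser \eqref{Condition sur P SI} avec $\gamma_P\le\lambda/2$ pour voir que $P(\mu+\nu)+\gamma_P\le\lambda$, donc qu'elle est domin\'ee par le terme principal ; c'est faisable (et m\^eme un peu plus propre que l'hypoth\`ese interm\'ediaire $P(x)\le x/10$ de l'article), mais cela doit figurer dans la preuve.
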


\begin{proof}
En suivant la preuve de \cite{RudinShapiro}, c'est à dire en réécrivant la somme intérieure et en posant $$S_I'(\vartheta) = \ds\sum_{M/q < m \leq M} \frac{1}{m} \ds\sum_{0 \leq k < m}\left|\ds\frac{1}{q^{\mu+\nu}}\ds\sum_{0 \leq l < q^{\mu+\nu}}f_P(l)e\left(- \frac{l\left( \vartheta - \frac{k}{m}q^{\mu + \nu}\right)}{q^{\mu+\nu}}\right) \right|, $$ nous obtenons \cite[equation ($31$)]{RudinShapiro} : \begin{equation}\label{lien SI et sommes sous jacentes}
S_I(\vartheta) \ll q^{\mu+\nu}\log q^{\mu+\nu} \left( \max_{\vartheta' \in \R}S_I'(\vartheta') \right).
\end{equation}
Pour tout \begin{equation}\label{Condi kappa SI}
1 \leq \kappa \leq \frac{2}{3}(\mu+\nu),
\end{equation} nous introduisons $1 \leq \rho \leq \mu + \nu - \kappa$, que l'on choisira explicitement par la suite. En \'ecrivant $l = u+vq^\kappa$, on obtient, si $\tilde{\mathcal{W}}_{\kappa} := \{u,v : f_P(u+vq^{\kappa})\overline{f_P(vq^{\kappa})}\neq f_P^{(\kappa + \rho)}(u+vq^{\kappa})\overline{f_P^{(\kappa + \rho)}(vq^{\kappa})} \}$  
\begin{align}
\notag g(t) := & \ds\frac{1}{q^{\mu+\nu}}\ds\sum_{l < q^{\mu+\nu}}f_P(l)e\left(- \frac{lt}{q^{\mu+\nu}} \right) \\ & \label{G1} = \frac{1}{q^{\mu+\nu-\kappa}}\ds\sum_{v < q^{\mu+\nu-\kappa}}f_P(vq^{\kappa})e\left(-\frac{vt}{q^{\mu+\nu-\kappa}} \right) \frac{1}{q^{\kappa}}\ds\sum_{u < q^{\kappa}}f_P^{(\kappa + \rho)}(u+vq^{\kappa})\overline{f_P^{(\kappa + \rho)}(vq^{\kappa})}e\left(- \frac{ut}{q^{\mu+\nu}}\right)
\\ \label{G2}& \quad + \frac{1}{q^{\mu+\nu}}\ds\sum_{(u,v)\in \tilde{\mathcal{W}}_{\kappa}}f_P(vq^{\kappa})e\left(-\frac{vt}{q^{\mu+\nu-\kappa}}- \frac{ut}{q^{\mu+\nu}} \right)
\\ \notag & \qquad \qquad \cdot\left[f_P(u+vq^{\kappa})\overline{f_P(vq^{\kappa})}-f_P^{(\kappa + \rho)}(u+vq^{\kappa})\overline{f_P^{(\kappa + \rho)}(vq^{\kappa})}\right].
\end{align}
On rappelle qu'ici \begin{equation*}\label{ecriture troncation SI}f_P^{(\kappa + \rho)}(n) = e\Bigg(\alpha a_P \Big(n \bmod q^{\kappa + \rho},T_q(n) \Big) \Bigg).
\end{equation*}
Nous incorporons \eqref{G1}, qu'on nomme $G_{\kappa,1}(t)$, et \eqref{G2}, qu'on nomme $G_{\kappa,2}(t)$, dans $S_I'(\vartheta)$ avec la valeur $\kappa = \kappa_d$ d\'efinie par \begin{equation}\label{choix de kappa SI}
q^{\kappa_d-1}< M^2/d^2 \leq q^{\kappa_d}.
\end{equation} Nous avons alors que la suite $(\kappa_d)_{d \geq 0}$ est décroissante, et donc satisfait à $1 \leq \kappa_d \leq \kappa_1 \leq 2 \mu$, de sorte que par \eqref{Equa M SI}, \eqref{Condi kappa SI} est satisfaite. En posant $t = \vartheta - \frac{k}{m}q^{\mu+\nu}$, nous avons la majoration suivante  \begin{equation}\label{lien SI' et autres}
 S_I'(\vartheta) \leq  \ds\sum_{1\leq d \leq M}\ds\sum_{\frac{M}{q}< m \leq M}\frac{1}{m}\ds\sum_{\substack{0 \leq k < m \\ (k,m) = d}}\left|G_{\kappa_d,1}\left(\vartheta - \frac{k}{m}q^{\mu+\nu}\right)+G_{\kappa_d,2}\left(\vartheta - \frac{k}{m}q^{\mu+\nu}\right)\right|,
\end{equation} et dans la somme de droite, le terme en $G_{\kappa_d,1}$ est nommé $S_{I,1}'$, quant au second, nous le désignons $S_{I,2}'$.

Contr\^olons \`a pr\'esent le terme d'erreur $S_{I,2}'(\vartheta)$. Supposons
 \begin{equation}\label{Condition sur rho SI}
P(\mu+\nu+1) \leq \rho \leq \frac{\mu+\nu}{4}.
\end{equation}
Par \eqref{Condi kappa SI} et \eqref{Condition sur rho SI}, nous avons $\rho \leq \frac{3}{4}(\mu+\nu-\kappa_d)$ de sorte que le Lemme \ref{Lemme propa SI} s'applique avec $\lambda = \mu+\nu-\kappa_d$, $\kappa = \kappa_d$ et $\rho = \rho$. Il vient donc que le nombre de $0 \leq v < q^{\mu+\nu-\kappa_d}$ tels qu'il existe $0 \leq u < q^{\kappa_d}$ avec \begin{equation*}
f_P(u+vq^{\kappa_d})\overline{f_P(vq^{\kappa_d})} \neq f_P^{(\kappa_d+\rho)}(u+vq^{\kappa_d})\overline{f_P^{(\kappa_d+\rho)}(vq^{\kappa_d})}
\end{equation*} est $O(q^{\mu+\nu-\kappa_d-\rho+P(\mu+\nu+1)})$. En nommant $\mathcal{\widetilde{W}}_{\kappa_d}$ l'ensemble des couples $(u,v)$ pour lesquels cette inégalité est vérifiée, on a \begin{equation}\label{petite propa pour SI}
\#\mathcal{\widetilde{W}}_{\kappa_d} \ll q^{\mu+\nu-\rho+P(\mu+\nu+1)}.
\end{equation}
Il est possible, en divisant par $(k,m)$ de montrer que
\begin{align*}
 S_{I,2}'(\vartheta) &\leq \sum_{1 \leq d \leq M}\frac{S_{I,2}''(M,d)}{dq^{\mu+\nu}},
\end{align*} avec
\begin{align*}
S_{I,2}''(M,d) = \ds\sum_{\frac{M}{qd} < m' \leq \frac{M}{d}}\ds\sum_{\substack{0 \leq k' < m'\\ (k',m') = 1}}\left| \ds\sum_{0 \leq w < q^{\mu+\nu}}c_{\kappa_d, \rho_1}'(w)e\left(-\frac{w \vartheta'}{q^{\mu+\nu}}+\frac{wk'}{m'} \right) \right|
\end{align*} où $|c_{\kappa_d, \rho_1}'(w)| \leq 2$ et vaut $0$ si $w \notin \mathcal{W}_{\kappa_d} := \{u+vq^{\kappa_d}, (u,v) \in  \tilde{\mathcal{W}}_{\kappa_d}\}$.
Puis, par l'inégalité de Cauchy-Schwarz et le Grand Crible, tout comme \cite{RudinShapiro}, nous avons en sommant sur tous les $w$ possibles :
\begin{align*}
\left|S_{I,2}''(M,d) \right|^2 & \ll (\log q) (q^{\mu+\nu} + M^2/d^2)\#\mathcal{\widetilde{W}}_{\kappa_d}.
\end{align*}
En réunissant ces deux équations avec \eqref{petite propa pour SI}, et le fait que $M^2 \leq q^{2 \mu} \leq q^{\mu+\nu}$ nous concluons :
\begin{equation}\label{Majoration finale SI2}
S_{I,2}'(\vartheta') \ll (\log q)^{1/2}\sum_{1 \leq d \leq M}\frac{1}{d}\, q^{-\rho/2+P(\mu+\nu+1)/2}\ll \mu (\log q)^{3/2}q^{-\rho/2+P(\mu+\nu+1)/2}.
\end{equation}


Pour estimer $S_{I,1}'(\vartheta ')$, il convient de séparer la partie multiplicative de la partie digitale de la question. Notre fonction $f_P^{(\rho)}$ ne correspond pas exactement à la fonction tronquée, telle que l'utilisent Mauduit et Rivat, leur technique ne s'applique pas ici.

Pour contourner ce probl\`eme on note $l = T_q(v)$ et on pose $0 \leq u < q^{\kappa_d}$. Alors on a, toujours si $w = v \bmod q^{\rho}$ : 

\smallskip

  \[
\left \{
\begin{array}{c  c}
    T_q(u+wq^{\kappa_d}+q^{\kappa_d+\rho}\lfloor q^{l-\rho}\rfloor ) & = T_q(vq^{\kappa_d}) \\
    \epsilon_i ((u+wq^{\kappa_d}+q^{\kappa_d+\rho}\lfloor q^{l-\rho}\rfloor) \bmod (q^{\kappa_d+\rho})) & = \epsilon_i (u+vq^{\kappa_d}) \,  , \\
\end{array}
\right.
\] et ce pour tout $0 \leq i < \kappa_d + \rho. $

\smallskip

En effet, si $\rho > l$, alors $w=v$ et $\lfloor q^{l-\rho}\rfloor  = 0$ et donc $u+wq^{\kappa_d}+q^{\kappa_d+\rho}\lfloor q^{l-\rho}\rfloor = u+vq^{\kappa_d}$. Inversement, si $\rho \leq l$, comme $0 \leq w \leq q^{\rho}-1$ et $0 \leq u \leq q^{\kappa_d}-1$, nous avons $u+wq^{\kappa_d} \leq q^{\kappa_d}-1+q^{\kappa_d+\rho}-q^{\kappa_d} = q^{\kappa_d+\rho}-1 $ et donc $T_q(u+wq^{\kappa_d}+q^{\kappa_d+\rho}\lfloor q^{l-\rho}\rfloor ) = T_q(q^{l+\kappa_d}) = l+\kappa_d = T_q(vq^{\kappa_d}) = T_q(u+vq^{\kappa_d}),$ et comme $q^{l + \kappa_d} \equiv 0  \bmod (q^{\rho+\kappa_d})$, on a la seconde ligne.

Avec ces considérations nous avons :

\begin{align}
\notag & G_{\kappa_d,1}(t) \\ \notag & = \frac{1}{q^{\mu+\nu-\kappa_d}}\ds\sum_{w < q^{\rho}}\ds\sum_{0 \leq l < \mu + \nu - \kappa_d}
 \ds\sum_{q^{l-1} \leq v < q^l}f_P(vq^{\kappa_d})e\left(-\frac{vt}{q^{\mu+\nu-\kappa_d}} \right)\frac{1}{q^{\rho}}\ds\sum_{h < q^{\rho}}e\left(h\frac{v-w}{q^{\rho}} \right)
\\ \notag & \quad \cdot \frac{1}{q^{\kappa_d}}\ds\sum_{u < q^{\kappa_d}}f_P^{(\kappa_d+\rho)}(u+wq^{\kappa_d}+q^{\kappa_d+\rho}\lfloor q^{l-\rho}\rfloor )\overline{f_P^{(\kappa_d+\rho)}(wq^{\kappa_d}+q^{\kappa_d+\rho}\lfloor q^{l-\rho}\rfloor)}e\left(- \frac{ut}{q^{\mu+\nu}}\right).
\\ \label{G2 transformee} & = \ds\sum_{0 \leq l < \mu + \nu - \kappa_d}\ds\sum_{h < q^{\rho}} \left[\frac{1}{q^{\kappa_d}}\ds\sum_{u < q^{\kappa_d}}c_{\kappa_d,\rho,l}(u,h)e\left(-\frac{ut}{q^{\mu+\nu}} \right) \right]
\\ \notag & \qquad \qquad \quad \cdot\left[\frac{1}{q^{\mu + \nu - \kappa_d}}\ds\sum_{q^{l-1} \leq v < q^l}f_P(vq^{\kappa_d})e\left(-\frac{vt}{q^{\mu + \nu - \kappa_d}}+\frac{hv}{q^\rho} \right) \right],
\end{align}
avec \begin{align*}
&c_{\kappa_d,\rho,l}(u,h) \\ &  = \frac{1}{q^{\rho}}\ds\sum_{w < q^{\rho}} f_P^{(\kappa_d+\rho)}(u+wq^{\kappa_d}+q^{\kappa_d+\rho}\lfloor q^l/q^\rho \rfloor)\overline{f_P^{(\kappa_d+\rho)}(wq^{\kappa_d}+q^{\kappa_d+\rho}\lfloor q^l/q^\rho \rfloor)}e\left(- \frac{hw}{q^{\rho}}\right).
\end{align*} 


Soit $$M_d = \ds\max\{0 \leq l \leq \mu+\nu-\kappa_d : P(l+\kappa_d) > l\}.$$ Nous s\'eparons la somme dans \eqref{G2 transformee} selon que $l \leq M_d$ not\'ee $G_{\kappa_d,1}'(t)$) ou que $l > M_d$ not\'ee $G_{\kappa_d,1}''(t)$. 
En majorant la seconde somme trivialement dans $G_{\kappa_d,1}'(t)$, nous avons \begin{align*}
|G_{\kappa_d,1}'(t)| &\leq \ds\sum_{0 \leq l \leq M_d}q^{l-(\mu+\nu-\kappa_d)}\ds\sum_{h < q^{\rho}} \frac{1}{q^{\kappa_d}}\left|\ds\sum_{u < q^{\kappa_d}}c_{\kappa_d,\rho,l}(u,h)e\left(-\frac{ut}{q^{\mu+\nu}} \right) \right|
\\ & \ll q^{M_d-(\mu+\nu)}\ds\sum_{0 \leq l \leq M_d}\ds\sum_{h < q^{\rho}} \left|\ds\sum_{u < q^{\kappa_d}}c_{\kappa_d,\rho,l}(u,h)e\left(-\frac{ut}{q^{\mu+\nu}} \right) \right|,
\end{align*}
et par le Lemme \ref{Lemme pour la somme de type I}, nous obtenons \begin{align}
\label{SI deuxieme terme d'erreur, intermediaire}\ds\sum_{1 \leq d \leq M}&\frac{1}{d}\ds\sum_{\frac{M}{qd}\leq m' < \frac{M}{d}}\frac{1}{m'}\ds\sum_{\substack{0 \leq k' < m' \\ (k',m')=1}}\left|G_{\kappa_d,1}'\left(\vartheta'-\frac{k}{m}q^{\mu+\nu}\right)\right|
  \ll  \frac{\log q}{q^{\mu+\nu}} \ds\sum_{1 \leq d \leq M}\frac{1}{d}\, q^{M_d+ \kappa_d+ \rho/2}\, M_d.
\end{align}

Cependant, comme la fonction $P$ est croissante et que nous avons déjà remarqué que $1 \leq \kappa_d \leq \kappa_1 \leq \frac{2}{3}(\mu+\nu)$, nous avons \begin{align*}
M_d \leq  \ds\max\left\{0 \leq l \leq \mu+\nu-\kappa_d : P\left(l+\frac{2}{3}(\mu+\nu)\right) > l \right\}.
\end{align*}
Supposons maintenant que $P(x) \leq x/10$, ceci implique \begin{equation*}
\frac{1}{10}\left(l + \frac{2}{3}(\mu+\nu)\right) > l, \text{ c'est-\`a-dire : } l < \frac{2}{27}(\mu+\nu),
\end{equation*} et donc $M_d \leq \frac{2}{27}(\mu+\nu)$. Comme $\kappa_d \leq \frac{2}{3}(\mu+\nu)$, nous obtenons quel que soit $d$ : 
\begin{equation}\label{SI deuxieme terme d'erreur, final}
\ds\sum_{1 \leq d \leq M}\frac{1}{d}\ds\sum_{\frac{M}{qd}\leq m' < \frac{M}{d}}\frac{1}{m'}\ds\sum_{\substack{0 \leq k' < m' \\ (k',m')=1}}\left|G_{\kappa_d,1}'\left(\vartheta'-\frac{k}{m}q^{\mu+\nu}\right)\right| \ll (\mu+\nu)^2 (\log q)^2q^{-\frac{7}{27}(\mu+\nu)+\rho/2}.
\end{equation}

\`A pr\'esent contr\^olons le terme restant. 
Comme $l > M_d$, nous pouvons \`a pr\'esent utiliser le Lemme \ref{Lemme premiere estimation Fourier} ce qui nous donne : 
\begin{align*}
|G_{\kappa_d,1}''(t)| & \leq \ds\sum_{M_d < l \leq \mu + \nu - \kappa_d}q^{\gamma(l,\kappa_d)-(\mu + \nu - \kappa_d)}\ds\sum_{h < q^{\rho}} \frac{1}{q^{\kappa_d}}\left|\ds\sum_{u < q^{\kappa_d}}c_{\kappa_d,\rho,l}(u,h)e\left(-\frac{ut}{q^{\mu+\nu}} \right) \right|. 
\end{align*}
Soit $$h(l,k) := \frac{\log \left(q^{P(\kappa+l)}-8\left(\sin  \frac{\pi |\!|\alpha|\!|}{4} \right)^2\right)}{P(\kappa+l)\log q}.$$ On a alors $$\gamma(l,\kappa_d) := lh(l,\kappa_d) = lh(l+\kappa_d,0) \leq lh(\mu+\nu,0) \leq (\mu+\nu-\kappa_d)h(\mu+\nu,0),$$ car quelque soit $k$, l'application $l \mapsto lh(k,0)$ est croissante quitte \`a supposer $P(0) \geq 4$. 
Mais $m\left(1-h(k,0)\right)  = \gamma_P(m,k)$ 
 et donc :
\begin{align*}
|G_{\kappa_d,1}''(t)|  \leq q^{-\gamma_P(\mu + \nu - \kappa_d, \, \mu+\nu)}\ds\sum_{M_d < l \leq \mu + \nu - \kappa_d}\ds\sum_{h < q^{\rho}} \frac{1}{q^{\kappa_d}}\left|\ds\sum_{u < q^{\kappa_d}}c_{\kappa_d,\rho,l}(u,h)e\left(-\frac{ut}{q^{\mu+\nu}} \right)\right|.
\end{align*} 

L'application $m \mapsto \gamma_P(m,k)$ est croissante (lin\'eaire de coefficient directeur strictement positif), et donc l'application $k \mapsto -\gamma_P(l-k,r)$ l'est également. Ainsi par le Lemme \ref{Lemme pour la somme de type I}, comme $1 \leq \kappa_d \leq \frac{2}{3}(\mu+\nu)$ nous obtenons \begin{align}
\label{SI terme principal, intermediaire}\ds\sum_{1 \leq d \leq M}&\frac{1}{d}\ds\sum_{\frac{M}{qd}\leq m' < \frac{M}{d}}\frac{1}{m'}\ds\sum_{\substack{0 \leq k' < m' \\ (k',m')=1}}\left|G_{\kappa_d,1}''\left(\vartheta'-\frac{k}{m}q^{\mu+\nu}\right)\right|
\\ \notag & \leq  \ds\sum_{1 \leq d \leq M}\frac{q^{-\gamma_P(\mu + \nu - \kappa_d, \, \mu+\nu)}}{dq^{\kappa_d}}
\\ \notag & \quad \cdot \ds\sum_{M_d < l \leq \mu+\nu-\kappa_d}\ds\sum_{\frac{M}{qd}\leq m' < \frac{M}{d}}\frac{1}{m'}\ds\sum_{\substack{0 \leq k' < m' \\ (k',m')=1}}\ds\sum_{h < q^{\rho}} \left|\ds\sum_{u < q^{\kappa_d}}c_{\kappa_d,\rho,l}(u,h)e\left(-\frac{ut}{q^{\mu+\nu}} \right) \right|
\\ \notag & \ll q^{-\gamma_P(\frac{1}{3}(\mu + \nu), \, \mu+\nu)}\ds\sum_{1 \leq d \leq M}\frac{q^{\rho/2}}{d} (\mu+\nu) \log q ,
\end{align} ce qui nous donne \begin{equation}\label{SI terme principal, final}
\eqref{SI terme principal, intermediaire} \ll q^{\rho/2-\gamma_P(\frac{1}{3}(\mu + \nu), \, \mu+\nu)} (\mu+\nu)^2 (\log q)^2.
\end{equation}

Nous avons donc par les \'equations \eqref{lien SI et sommes sous jacentes}, \eqref{lien SI' et autres},  \eqref{Majoration finale SI2}, \eqref{SI deuxieme terme d'erreur, final} et \eqref{SI terme principal, final} : 

\begin{align*}
S_I(\vartheta) \ll & (\log q^{\mu+\nu})\left( \mu (\log q)^{3/2}q^{\mu+\nu-\rho/2+P(\mu+\nu+1)/2} \right. \\ & \left. + (\mu+\nu)^2 (\log q)^2q^{\frac{20}{27}(\mu+\nu)+\rho/2} \right. \\ & \left. + q^{\mu+\nu+\rho/2-\gamma_P(\frac{1}{3}(\mu + \nu), \, \mu+\nu)} (\mu+\nu)^2 (\log q)^2  \right).
\end{align*}

Nous choisissons $\rho = \frac{P(\mu+\nu+1)}{2}+ \gamma_P(\frac{1}{3}(\mu + \nu), \, \mu+\nu) $, et par \eqref{Condition sur P SI} et la Remarque \ref{Remarque sur la fonction gammaP},  
 \eqref{Condition sur rho SI} est v\'erifi\'ee. Le choix de $\rho$ fournit 

 \begin{align*}
S_I(\vartheta) & \ll q^{\mu+\nu}(\mu+\nu)^3(\log q)^3\left(q^{\frac{P(\mu+\nu+1)}{4}- \frac{1}{2}\gamma_P(\frac{1}{3}(\mu + \nu), \, \mu+\nu)}+q^{\frac{3}{4}\gamma_P(\frac{1}{3}(\mu + \nu), \, \mu+\nu)-\frac{7}{27}(\mu+\nu)} \right)
\\ & \ll q^{\mu+\nu}(\mu+\nu)^3(\log q)^3\left(q^{- \frac{1}{4}\gamma_P(\frac{1}{3}(\mu + \nu), \, \mu+\nu)}+q^{\frac{3}{4}\gamma_P(\frac{1}{3}(\mu + \nu), \, \mu+\nu)-\frac{7}{27}(\mu+\nu)} \right).
\end{align*}
 De plus la Remarque \ref{Remarque sur la fonction gammaP} nous donne la majoration
 \begin{equation}\label{Equation Finale SI}
 S_I(\vartheta) \ll (\mu+\nu)^3(\log q)^3q^{\mu+\nu- \frac{1}{4}\gamma_P(\frac{1}{3}(\mu + \nu), \, \mu+\nu)}
 \end{equation}
 qui achève notre preuve.
\end{proof}

\section{Travail pr\'eparatoire pour les sommes de type II}\label{Sec Prepa II}

De m\^eme que pour les sommes de type I, le traitement des sommes de type II n\'ecessite un certain nombre de r\'esultats pr\'eliminaires. La strat\'egie pour contr\^oler les sommes de type II consiste \`a introduire une fonction doublement tronqu\'ee et \`a exploiter sa structure pour mettre en exergue dans nos estimations la propri\'et\'e de Fourier de $e(\alpha a_P(\cdot))$. Les Lemmes \ref{Lemme Olivier Robert}, \ref{Lemme digital premiere troncature SII} et \ref{Lemme digital double troncature}, issus directement de la méthode de Mauduit et Rivat, permettent d'introduire la fonction doublement tronqu\'ee. Le Lemme \ref{Estimation Fourier SII} nous permet de faire appara\^itre la propri\'et\'e de Fourier dans l'estimation des sommes de type II. Pour conclure la Partie \ref{Section SII}, nous serons amen\'es \`a \'evaluer des sommes d'exponentielles pond\'er\'ees de deux types diff\'erents. Les Lemmes \ref{Equa terme principal somme mn SII} et \ref{h2+h0 diff 0} sont les estimations des sommes d'exponentielles que nous serons amen\'es \`a regarder par la suite.



\begin{lemma}\label{Equa terme principal somme mn SII}
Soient $\mu,\nu, M, N, q$ des entiers tels que $2 \mu \leq \nu $, $q^{\mu-1} \leq M < q^\mu$ et $q^{\nu-1} \leq N < q^\nu$ . Soient $k \in \{\mu+\nu-4, \ldots ,\mu+\nu-1\}$ et $M/q \leq m < M$ des entiers. D\'efinissons $$I(k,m) = \left\{\frac{N}{q} \leq n < N : \frac{q^{k}}{m} \leq n < \frac{q^{k+1}}{m}\right\},$$ alors, si $I_1 \subseteq [M/q,M[$ est un intervalle, nous avons :
\begin{equation*}
\left|\ds\sum_{m \in I_1}e\left(m\frac{h_1r}{q^{\mu_2}} \right)\#I(k,m)\right| \ll q^\nu \min\left(q^\mu, \left|\sin \pi \frac{h_1r}{q^{\mu_2}} \right|^{-1} \right).
\end{equation*}
\end{lemma}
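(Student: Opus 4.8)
Le plan est de ramener le comptage pond\'er\'e $\#I(k,m)$ \`a une expression explicite et affine en $1/m$ sur quelques sous-intervalles de $m$, puis d'estimer les sommes d'exponentielles obtenues par une majoration de somme g\'eom\'etrique combin\'ee \`a une sommation d'Abel. Posons $\theta := h_1 r/q^{\mu_2}$ : seules les quantit\'es $e(m\theta)$ et $|\sin\pi\theta|$ interviennent, et tout ce qui suit sera uniforme en $\theta$, le cas $\theta\in\Z$ (o\`u $|\sin\pi\theta|^{-1} = +\infty$) \'etant couvert par la discussion g\'en\'erale.

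\emph{\'Etape 1 : calcul de $\#I(k,m)$.} On a $I(k,m) = \{n\in\Z : \max(N/q,\, q^k/m) \le n < \min(N,\, q^{k+1}/m)\}$. Comme $\frac{q^{k+1}/m}{q^k/m} = q = \frac{N}{N/q}$, les deux bornes $\max(N/q, q^k/m)$ et $\min(N, q^{k+1}/m)$ changent de branche active au \emph{m\^eme} point $m_0 := q^{k+1}/N$. Ainsi, pour $m < m_0$ on a $q^k/m > N/q$ et $q^{k+1}/m > N$, d'o\`u $I(k,m) = \{n\in\Z : q^k/m \le n < N\}$ et $\#I(k,m) = \max\bigl(0,\ \lceil N\rceil - \lceil q^k/m\rceil\bigr)$ ; pour $m \ge m_0$ on a sym\'etriquement $I(k,m) = \{n\in\Z : N/q \le n < q^{k+1}/m\}$ et $\#I(k,m) = \max\bigl(0,\ \lceil q^{k+1}/m\rceil - \lceil N/q\rceil\bigr)$. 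En particulier $\#I(k,m) = 0$ hors de l'intervalle $(q^k/N,\, q^{k+2}/N)$, et sur chacun des (au plus) deux sous-intervalles de $I_1$ d\'ecoup\'es par $m_0$ et sur lesquels $\#I(k,m) > 0$, on a une \'egalit\'e de la forme $\#I(k,m) = \pm\, q^{k+j}/m + c + O(1)$ avec $j\in\{0,1\}$ et $|c| \le \lceil N\rceil \le q^\nu$.

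\emph{\'Etape 2 : estimation de la somme.} On d\'ecoupe $I_1$ en les (au plus) deux intervalles $J_1, J_2 \subseteq [M/q, M[$ o\`u $\#I(k,m) > 0$ et o\`u les formules pr\'ec\'edentes s'appliquent, les $m$ restants ne contribuant pas. En rempla\c cant les parties enti\`eres par leur argument modulo $O(1)$, la somme \`a estimer devient une combinaison born\'ee de sommes de deux types : d'une part $w\sum_{m\in J_i} e(m\theta)$ avec un poids constant $|w| \le q^\nu$ ; d'autre part $q^{k+j}\sum_{m\in J_i} e(m\theta)/m$ avec $j\in\{0,1\}$ ; le tout augment\'e d'un reste $O(|I_1|) = O(q^\mu)$ issu des arrondis. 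Pour le premier type on utilise $\bigl|\sum_{m\in J_i} e(m\theta)\bigr| \le \min\bigl(|J_i|,\, |\sin\pi\theta|^{-1}\bigr) \le \min\bigl(q^\mu,\, |\sin\pi\theta|^{-1}\bigr)$, d'o\`u une contribution $\ll q^\nu \min(q^\mu, |\sin\pi\theta|^{-1})$. Pour le second, une sommation d'Abel avec le poids monotone $m\mapsto 1/m$ et la minoration $m \ge M/q \ge q^{\mu-2}$ donne $\bigl|\sum_{m\in J_i} e(m\theta)/m\bigr| \ll \frac{q}{M}\min\bigl(q^\mu,\, |\sin\pi\theta|^{-1}\bigr)$ ; comme $q^{k+j} \le q^{\mu+\nu}$ (car $k\le\mu+\nu-1$) et $\frac{q}{M} \le q^{2-\mu}$ (car $M \ge q^{\mu-1}$), le pr\'efacteur $q^{k+j}\cdot\frac{q}{M}$ est $\ll q^\nu$, d'o\`u de nouveau une contribution $\ll q^\nu \min(q^\mu, |\sin\pi\theta|^{-1})$. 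Enfin le reste $O(q^\mu)$ est absorb\'e puisque $\mu \le \nu$ et $\min(q^\mu, |\sin\pi\theta|^{-1}) \ge 1$. En r\'eunissant ces bornes on obtient l'estimation voulue.

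\emph{Point d\'elicat.} L'argument est pour l'essentiel de routine ; le seul endroit demandant un minimum d'attention est le d\'ecompte de l'\'Etape 1, et plus pr\'ecis\'ement l'observation que les bornes int\'erieure et ext\'erieure de $I(k,m)$ changent de r\'egime \emph{simultan\'ement} en $m_0 = q^{k+1}/N$ : c'est elle qui ram\`ene le probl\`eme \`a deux morceaux propres, affines en $1/m$, plut\^ot qu'\`a une disjonction de cas plus p\'enible. Ceci acquis, les estimations de sommes d'exponentielles rel\`event de la bo\^ite \`a outils classique (somme g\'eom\'etrique et sommation d'Abel), et les seules hypoth\`eses $q^{\mu-1}\le M<q^\mu$, $N<q^\nu$, $k\le\mu+\nu-1$ et $\mu\le\nu$ suffisent \`a faire tomber tous les pr\'efacteurs au bon niveau $q^{\nu+O(1)}$.
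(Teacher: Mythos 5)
Votre preuve est correcte et suit pour l'essentiel la m\^eme route que l'article : m\^eme formule explicite par cas pour $\#I(k,m)$ (avec l'observation que les deux bornes changent de branche au m\^eme point $m_0=q^{k+1}/N$), puis sommation d'Abel combin\'ee \`a la majoration $\min(|J|,|\sin\pi\theta|^{-1})$ de la somme g\'eom\'etrique. La seule diff\'erence est de l'ordre de la mise en forme : vous d\'ecomposez le poids en une partie lisse $\pm q^{k+j}/m$, une constante et une erreur d'arrondi $O(1)$, tandis que l'article fait l'Abel directement sur le comptage en bornant sa variation totale par t\'elescopage et traite s\'epar\'ement le cas $(M,N)\neq(q^{\mu-1},q^{\nu-1})$, \'etape que votre r\'edaction \'evite.
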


\begin{proof}

Commen\c cons par remarquer que 
 \begin{align*}
\#I(k,m) = \left\{ \begin{array}{c c}
0 & \text{si} \quad m \leq \ds\frac{q^k}{N} \quad \text{ou} \quad m \geq \ds\frac{q^{k+2}}{N}
\\ \left \lceil \ds\frac{q^{k+1}}{m}\right \rceil- \left\lceil\ds\frac{N}{q}\right\rceil & \text{si} \quad \ds\frac{q^{k+1}}{N} \leq m < \ds\frac{q^{k+2}}{N}
\\ N- \left\lfloor \ds\frac{q^{k}}{m} \right\rfloor & \text{si} \quad \ds\frac{q^k}{N} \leq m < \ds\frac{q^{k+1}}{N} 
\end{array}. \right.
\end{align*}


Commençons par traiter le cas $(M,N) = (q^{\mu-1},q^{\nu-1})$. Sous ces conditions, nous avons $q^{\mu+\nu-4} \leq mn < q^{\mu+\nu-2}$. Ainsi pour $k \geq \mu+\nu-2$, les conditions imposent $\#I(k,m)$ nul ainsi que
\begin{equation}\label{Imu+nu-2}
\#I(\mu+\nu-4,m) =  \left\lceil \frac{q^{\mu+\nu-3}}{m} \right\rceil -q^{\nu-2}
\end{equation} et
\begin{equation}\label{Imu+nu-1}
\#I(\mu+\nu-3,m) =  \quad q^{\nu-1}- \left\lfloor \frac{q^{\mu+\nu-3}}{m} \right\rfloor .
\end{equation}

La technique de majoration \'etant quasi identique, nous nous contenterons de l'expliciter pour $\#I(\mu+\nu-3,m)$. Une sommation d'Abel fournit \begin{align}\label{Somme Lemme techn 1 chap 2 SII, cas simple}
&\left|\ds\sum_{m \in I_1}e\left(m\frac{h_1r}{q^{\mu_2}} \right)\left |q^{\nu-1}- \left\lfloor\frac{q^{\mu+\nu-3}}{m}\right\rfloor \right| \right| 
\\ \notag & \ll \max \left(\left|q^{\nu-1}-\left\lfloor \ds\frac{q^{\mu+\nu-3}}{\max I_1 + 1} \right\rfloor \right|\left|\ds\sum_{m \in I_1}e\left(m\frac{h_1r}{q^{\mu_2}} \right) \right|, \right.
\\& \notag \left. \qquad \sup_{q^{\mu-2} < K \leq q^{\mu-1}}\left|\ds\sum_{m \in I_1 \cap [q^{\mu-2},K)}e\left(m\frac{h_1r}{q^{\mu_2}} \right) \right|\ds\sum_{m \in I_1}\left|\left\lfloor\frac{q^{\mu+\nu-3}}{m+1}\right\rfloor-\left\lfloor\frac{q^{\mu+\nu-3}}{m}\right\rfloor\right|\right).
\end{align}
Majorons d'abord le premier terme du max. Comme $I_1 \subseteq [q^{\mu-2},q^{\mu-1}[$, nous avons $$\left|q^{\nu-1}-\left\lfloor \ds\frac{q^{\mu+\nu-3}}{\max I_1 + 1} \right\rfloor \right| \ll q^\nu $$ et comme \begin{equation}\label{Equa somme expo sur m}
\left|\ds\sum_{m \in I_1}e(\alpha m) \right| \leq \min \left(|I_1|,\ds\frac{1}{|\sin \pi \alpha|}\right),
\end{equation} nous avons la majoration d\'esir\'ee car $|I_1| \ll q^\mu$. Pour le second terme, on remarque que la somme interne est téléscopique et devient de fait $O(q^\nu)$ 
car $I_1 \subseteq [q^{\mu-2},q^{\mu-1}[$ et nous concluons en utilisant \eqref{Equa somme expo sur m}.

\smallskip

Pour traiter le cas $(M,N) \neq (q^{\mu-1},q^{\nu-1})$, on remarque que si $m<q^{k+1}/N < m+1$, c'est à dire tel que $\#I(k,m)$ ne soit pas de la même forme que $\#I(k,m+1)$, alors $m = \left \lfloor q^{k+1}/N \right \rfloor$. On sépare le reste de la somme selon que $m < q^{k+1}/N$ ou $m\geq q^{k+1}/N$ et nous appliquons le procédé précédent à chaque sous-cas. Nous concluons par le fait que $\#I\left(k,\left\lfloor \frac{q^{k+1}}{N} \right\rfloor\right) \leq \#\{N/q \leq n < N\} \ll q^\nu$.

\end{proof}

\begin{lemma}\label{h2+h0 diff 0}
Soient $\mu,\nu$ des entiers tels que $\frac{1}{4}(\mu+\nu)\leq \mu \leq \nu \leq \frac{3}{4}(\mu+\nu)$. Soient $M$, $m$, $k$ et  $I(k,m)$  d\'efinis dans le Lemme \ref{Equa terme principal somme mn SII}. Alors si $h, h_1, \mu_0, \mu_1, \mu_2, s$ sont des entiers tels que $\mu_0 < \mu_1 < \mu_2 $ et $hq^{\mu_1-\mu_2}s \notin \Z$ , nous avons pour tout $I_1 \subseteq [q^{\mu-2},q^\mu[$ :
\begin{align*}
\left|\ds\sum_{m \in I_1}e\left(\frac{h_1rm}{q^{\mu_2}}\right)\ds\sum_{n \in I(k,m)}e(hsnq^{\mu_1-\mu_2}) \right|\ll \left(shq^{3(\mu_2-\mu_1)} \right)^{1/2}q^{\frac{7}{8}(\mu+\nu)}
\end{align*}
\end{lemma}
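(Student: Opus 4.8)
The plan is to first handle the inner sum over $n$ by Abel summation, exactly as in the proof of Lemma \ref{Equa terme principal somme mn SII}, treating $e(hsnq^{\mu_1-\mu_2})$ as the oscillating factor and $\#I(k,m)$ (or rather its piecewise description from the previous lemma) as the monotone weight. The geometric series $\sum_{n} e(hsnq^{\mu_1-\mu_2})$ is bounded by $\min(\#I(k,m), |\sin \pi h s q^{\mu_1-\mu_2}|^{-1})$; since $hsq^{\mu_1-\mu_2} \notin \Z$ the sine is nonzero, and standard estimates give $\|h s q^{\mu_1-\mu_2}\|^{-1} \ll q^{\mu_2-\mu_1}$ in the worst case, but we want to keep the dependence on $sh$ visible. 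After the Abel summation one is left with an expression of the shape $\sum_{m \in I_1} e(h_1 r m / q^{\mu_2}) \cdot (\text{bounded monotone weight of size} \ll q^{\nu})$, times a factor controlling the inner oscillation.

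The second step is to insert the telescoping/monotonicity estimate for $\#I(k,m)$ in $m$ (the same $O(q^\nu)$ bound on $\sum_m |\#I(k,m+1)-\#I(k,m)|$ that was established in Lemma \ref{Equa terme principal somme mn SII}) so that, after a second Abel summation in $m$, the outer sum reduces to controlling $\sum_{m \in J} e(h_1 r m / q^{\mu_2})$ over subintervals $J \subseteq I_1$. Here one must be careful: we do \emph{not} have a lower bound on $\|h_1 r / q^{\mu_2}\|$, so a pure Kusmin–Landau/geometric bound on that sum is not available. Instead the plan is to combine the trivial bound $|J| \ll q^\mu$ with the oscillation of the $n$-sum: the key mechanism is that summing $\min(q^\nu, \ldots)$-type quantities against $e(hsnq^{\mu_1-\mu_2})$ over the joint range of $m$ and $n$ — whose product $mn$ lies in a dyadic block of length $\ll q^{\mu+\nu}$ — lets one trade a full power of $q^{\mu+\nu}$ for the square-root cancellation factor $(shq^{3(\mu_2-\mu_1)})^{1/2}$. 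Concretely I expect to bound the double sum by $\bigl(\sum_{m,n} 1\bigr)^{1/2}$ times $\bigl(\sum \bigl|\sum e(hsnq^{\mu_1-\mu_2})\bigr|^2\bigr)^{1/2}$ after a Cauchy–Schwarz in $m$, then evaluate the mean square of the geometric sum, which contributes the $shq^{3(\mu_2-\mu_1)}$ factor and, together with the hypothesis $\frac14(\mu+\nu)\leq \mu,\nu \leq \frac34(\mu+\nu)$ controlling the ranges of $m$ and $n$, the exponent $\frac78(\mu+\nu)$.

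The main obstacle will be step two: making the bookkeeping of the two Abel summations and the Cauchy–Schwarz produce exactly the exponent $\frac{7}{8}(\mu+\nu)$ rather than something weaker like $(\mu+\nu)$. This is where the constraints $\mu \leq \nu$, $\mu \geq \frac14(\mu+\nu)$, $\nu \leq \frac34(\mu+\nu)$ must be used optimally: $m$ ranges over an interval of length $\ll q^\mu \leq q^{\frac34(\mu+\nu)/?}$ — one has to split the $m$-range and the $n$-range so that the diagonal (large $\min$) contributions and the off-diagonal (small $\min$, i.e.\ $|\sin|^{-1}$) contributions balance, and the half-integer condition $hsq^{\mu_1-\mu_2}\notin\Z$ is what rules out the catastrophic diagonal term. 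I would also keep an eye on the implicit constant's dependence on $q$, since spacing arguments modulo $q^{\mu_2}$ lose a factor of $q$ at each step, but these are absorbed into the $\ll$. Once the ranges are chosen correctly the remaining computation is a routine sum of geometric series and an application of \eqref{Equa somme expo sur m}, so I would not expect difficulties beyond the calibration of exponents.
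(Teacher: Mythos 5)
Your plan diverges from the paper's proof at the decisive step, and the mechanism you propose in its place does not deliver what you claim. The paper never takes the modulus of the inner geometric sum: it sums it in closed form, keeping the $m$-dependent phase $e\left(hsq^{\mu_1-\mu_2}\tfrac{a_m+b_m}{2}\right)$ where $I(k,m)=[a_m,b_m[$ and $a_m,b_m\approx q^{k}/m$, removes the amplitude $\sin\left(\pi hsq^{\mu_1-\mu_2}(b_m-a_m)\right)$ by an Abel summation, and then applies the van der Corput second-derivative test \cite[Theorem $2.2$]{GrahamKolesnik} to the phase $f(x)=\frac{h_1r}{q^{\mu_2}}x+\frac{hsq^{\mu_1-\mu_2+k}}{2x}$. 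The whole point is that the uncontrollable linear coefficient $h_1r/q^{\mu_2}$ disappears in $f''(x)=\frac{hsq^{\mu_1-\mu_2+k}}{2x^3}$; it is this curvature, inherited from the endpoints of $I(k,m)$, that produces the factor $(sh)^{1/2}$ and, using $k\in\{\mu+\nu-4,\ldots,\mu+\nu-1\}$ together with $\frac14(\mu+\nu)\le\mu\le\nu\le\frac34(\mu+\nu)$, the exponent $\frac78(\mu+\nu)$. Your substitute --- Cauchy--Schwarz in $m$ against the mean square of the inner geometric sum --- produces none of this: $\left(\sum_m 1\right)^{1/2}\left(\sum_m\left|\sum_{n\in I(k,m)}e(hsnq^{\mu_1-\mu_2})\right|^2\right)^{1/2}$ is at most $q^{\mu}\min\left(q^{\nu},\|hsq^{\mu_1-\mu_2}\|^{-1}\right)$, i.e.\ exactly the trivial bound, with no factor $sh$ and no cancellation over $m$. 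The assertion that the mean square of the geometric sum ``contributes the $shq^{3(\mu_2-\mu_1)}$ factor'' is unsupported, and the exponent calibration that you yourself single out as the main obstacle is left unresolved; once you have replaced the inner sum by its modulus, there is no further source of cancellation available (as you note, $\|h_1r/q^{\mu_2}\|$ has no lower bound). So the key idea of the proof is missing from your plan.

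That said, the stated inequality is much weaker than what the paper's method yields, and the two ingredients you do write down already suffice if assembled directly: since $q^{\mu_2-\mu_1}\nmid hs$, the inner sum is $\le\frac12\|hsq^{\mu_1-\mu_2}\|^{-1}\le\frac12 q^{\mu_2-\mu_1}$, and $|I_1|<q^{\mu}\le q^{(\mu+\nu)/2}$ because $\mu\le\nu$, so the double sum is $\ll q^{\frac12(\mu+\nu)+\mu_2-\mu_1}\le q^{\frac78(\mu+\nu)}q^{\frac32(\mu_2-\mu_1)}\le\left(shq^{3(\mu_2-\mu_1)}\right)^{1/2}q^{\frac78(\mu+\nu)}$, the last step using $sh\ge1$ (implicit in the statement, since the right-hand side presupposes $sh>0$). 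You did not make this observation --- your text suggests you believe the bound to beat is of size $q^{\mu+\nu}$ --- and instead staked the proof on a cancellation mechanism that does not work. Either complete this trivial computation, or follow the paper's route (closed-form geometric sum, Abel summation, second-derivative test) if you want a bound in which the dependence on $s$ and $h$ is genuinely earned rather than absorbed by the generous right-hand side.
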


\begin{proof}

Soient $a_m$ et $b_m$ des entiers tels que $I(k,m) = [a_m,b_m[$ (les entiers d\'ependent de $k$, mais pour ne pas alourdir les notations nous ne le marquons pas). Comme $ hq^{\mu_1-\mu_2}s \notin \Z$, nous pouvons \'ecrire en explicitant la sommation sur $n$ :
\begin{align*}
&\left|\ds\sum_{m \in I_1} e\left(\frac{h_1rm}{q^{\mu_1}}\right)\ds\sum_{n \in I(k,m)}e(hq^{\mu_1-\mu_2}sn) \right|
 \\ \notag & \quad \ll \ds\frac{1}{|\sin (\pi hq^{\mu_1-\mu_2}s) |}\left|\ds\sum_{m \in I_1}e\left(\frac{h_1rm}{q^{\mu_1}}\right)e\left(hq^{\mu_1-\mu_2}s\frac{a_m+b_m}{2}\right)\sin \left(\pi hq^{\mu_1-\mu_2}s(b_m-a_m) \right) \right|.
\end{align*} Rappelons que $|\sin (\pi x) | \geq 2|\!|x|\!|_{\Z} $. Comme $q^{\mu_1-\mu_2}hs \notin \Z$, nous pouvons \'ecrire $hs = kq^{\mu_2-\mu_1}+l$ avec $1 \leq l \leq q^{\mu_2-\mu_1}-1$, et donc $|\!|q^{\mu_1-\mu_2}hs |\!|_{\Z}\geq q^{\mu_1-\mu_2}$. Ainsi 
\begin{align*}
|\sin (\pi hq^{\mu_1-\mu_2}s) |^{-1} \ll q^{\mu_1-\mu_2}.
\end{align*}
Quitte \`a d\'ecouper l'intervalle, du fait de la forme de $I(k,m)$ (comme le laisse comprendre \eqref{Imu+nu-2}), 
la somme interne se ramène à estimer : 
\begin{align*}
\ds\sum_{m \in J_1}e\left(\frac{h_1rm}{q^{\mu_1}}+hq^{\mu_1-\mu_2}s\frac{q^{k}}{2m} \right)\sin \left(\pi hq^{\mu_1-\mu_2}s\frac{q^k}{m} \right),
\end{align*} avec $J_1 \subseteq I_1 \subseteq [q^{\mu-2},q^\mu)$.

Une sommation d'Abel fournit \begin{align*}
& \left|\ds\sum_{m \in J_1}e\left(\frac{h_1rm}{q^{\mu_1}}+hq^{\mu_1-\mu_2}s\frac{q^{k}}{2m} \right)\sin \left(\pi hq^{\mu_1-\mu_2}s\frac{q^k}{m} \right)\right|
\\ & \ll \max \left(\left|\ds\sum_{m \in J_1}e\left(\frac{h_1rm}{q^{\mu_1}}+hq^{\mu_1-\mu_2}s\frac{q^{k}}{2m} \right) \right| \right., \\ & \qquad  \sup_{M/q < K \leq M}\left|\ds\sum_{l \in J_1 \cap [M/q,K)}e\left(\frac{h_1rm}{q^{\mu_1}}+hq^{\mu_1-\mu_2}s\frac{q^{k}}{2m} \right) \right|
\\ & \qquad \left. \ds\sum_{m \in I_1}\left|\sin \left(\pi hq^{\mu_1-\mu_2}s\frac{q^k}{m+1} \right) -\sin \left(\pi hq^{\mu_1-\mu_2}s\frac{q^k}{m} \right)  \right|\right).
\end{align*} La fonction $$f(x):= \frac{h_1r}{q^{\mu_2}}x + \ds\frac{1}{2x}hsq^{\mu_1-\mu_2+k}$$ est $\mathcal{C}^{\infty}$ sur $[q^{\mu-2},q^\mu)$ et de plus sa d\'eriv\'ee seconde v\'erifie $$\lambda := \frac{1}{2} shq^{\mu_1-\mu_2+k-3\mu}\leq f''(x) = \ds\frac{shq^{\mu_1-\mu_2+k}}{2x^3}\leq shq^{\mu_1-\mu_2+k-3\mu+3} = \lambda q^3,  $$  nous pouvons alors utiliser \cite[Theorem $2.2$]{GrahamKolesnik} pour obtenir : \begin{align*}
\left|\ds\sum_{l \in J_1 \cap [q^{\mu-1},K)}e\left(\frac{h_1r}{q^{\mu_1}}l + \frac{1}{2l}hsq^{\mu_1-\mu_2+k} \right) \right| \ll q^{\mu}\left(shq^{\mu_1-\mu_2+k-3\mu}\right)^{1/2}+\left(shq^{\mu_1-\mu_2+k-3\mu}\right)^{-1/2}.
\end{align*} En exploitant le fait que $\mu+\nu-4 \leq k \leq \mu + \nu - 1$ dans l'estimation pr\'ec\'edente, nous obtenons : 
\begin{align*}
\left|\ds\sum_{l \in J_1 \cap [q^{\mu-1},K)}e\left(\frac{h_1r}{q^{\mu_1}}l + \frac{1}{2l}hsq^{\mu_1-\mu_2+k'} \right) \right| & \ll q^{\mu}\left(shq^{\mu_1-\mu_2+\nu-2\mu }\right)^{1/2}+\left(shq^{\mu_1-\mu_2+\nu-2\mu }\right)^{-1/2}
\\ & \ll \left(shq^{\mu_1-\mu_2} \right)^{1/2}q^{\nu/2}+ \left(shq^{\mu_1-\mu_2}\right)^{-1/2}q^{\mu-\nu/2}.
\end{align*} En majorant trivialement le terme de la somme sur les sinus, et en utilisant $J_1 \subseteq I_1 \subseteq [q^{\mu-2},q^\mu)$, et donc $|J_1| \ll q^\mu$ nous obtenons la majoration : \begin{align*}
&\notag \left|\ds\sum_{m \in J_1}e\left(\frac{h_1rm}{q^{\mu_1}}+hq^{\mu_1-\mu_2}s\frac{q^{k}}{2m} \right)\sin \left(\pi hq^{\mu_1-\mu_2}s\frac{q^k}{2m} \right)\right|
\\ \label{Equa temporaire lemme sinus} & \qquad \ll \ \left(shq^{\mu_2-\mu_1} \right)^{1/2}q^{\mu+\nu/2}+ \left(shq^{\mu_1-\mu_2}\right)^{-1/2}q^{2\mu-\nu/2}.
\end{align*} Cependant, nous avons $\frac{1}{4}(\mu+\nu)\leq \mu \leq \nu \leq \frac{3}{4}(\mu+\nu) $, ce qui donne 
 \begin{equation*}
\left|\ds\sum_{m \in J_1}e\left(\frac{h_1rm}{q^{\mu_1}}+hq^{\mu_1-\mu_2}s\frac{q^{k}}{2m} \right)\sin \left(\pi hq^{\mu_1-\mu_2}s\frac{q^k}{2m} \right)\right| \ll \left(shq^{\mu_2-\mu_1} \right)^{1/2}q^{\frac{7}{8}(\mu+\nu)},
\end{equation*} et le lemme est d\'emontr\'e.
\end{proof}

Le lemme ci dessous est le lemme crucial du pr\'esent article. Il dit en quelque sorte que si on ne perturbe pas trop des entiers $m$ et $n$ en $m'$ et $n'$, le produit $m'n'$ aura la même taille que le produit $mn$. Ceci nous permet d'introduire dans $S_{II}$ la notion essentielle de fonction doublement tronqu\'ee. Les explications de la n\'ecessit\'e d'un tel r\'esultat se trouvent dans la Partie \ref{Section SII}.

\begin{lemma}\label{Lemme Olivier Robert}
Soient $\mu,\nu,\rho$ des entiers tels que $2\rho \leq \nu - 1 $. Soient $q^{\mu-2} \leq m < q^\mu$ et $q^{\nu-2} \leq n < q^\nu$ des entiers. Posons $m' = m+q^{\mu -\rho}$ et $n' = n+q^{\rho}$. 
 
 Alors :
 $$\#\{q^{\mu-1}\leq m < q^{\mu},q^{\nu-1}\leq n < q^{\nu} : T_q(mn) \neq T_q(m'n')\} \ll \log(q^{\mu+\nu})q^{\mu+\nu-\rho}.$$ 
\end{lemma}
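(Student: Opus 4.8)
The plan is to reduce the statement to counting pairs $(m,n)$ for which $mn$ lies in a short interval just below a power of $q$, and then to bound that count by a one-dimensional argument of divisor type.

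First, since $0<m<m'$ and $0<n<n'$ we have $mn<m'n'$, and as $T_q(\cdot)=\lfloor\log(\cdot)/\log q\rfloor$ is non-decreasing we get $T_q(mn)\le T_q(m'n')$, with equality failing exactly when there is an integer $k$ such that $mn<q^{k}\le m'n'$. Expanding,
$$m'n'-mn = mq^{\rho}+nq^{\mu-\rho}+q^{\mu}.$$
Using $m<q^{\mu}$, $n<q^{\nu}$, the hypothesis $2\rho\le\nu-1$ (so that $\rho\le\nu-\rho$, whence $\mu+\rho\le\mu+\nu-\rho$), and $\mu\le\mu+\nu-\rho$, each of the three summands is $<q^{\mu+\nu-\rho}$, hence $m'n'-mn<3q^{\mu+\nu-\rho}$. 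Therefore, for a pair $(m,n)$ with $q^{\mu-1}\le m<q^{\mu}$, $q^{\nu-1}\le n<q^{\nu}$ and $T_q(mn)\ne T_q(m'n')$ one has $q^{\mu+\nu-2}\le mn<q^{k}\le m'n'<4q^{\mu+\nu}\le q^{\mu+\nu+2}$, so $k$ takes one of the $O(1)$ values in $\{\mu+\nu-1,\mu+\nu,\mu+\nu+1\}$; and for each such $k$ the bad event forces $mn\in(q^{k}-3q^{\mu+\nu-\rho},\,q^{k})$, an interval $J_{k}$ of length $3q^{\mu+\nu-\rho}$.

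It then remains to bound, for fixed $k$, the number of $(m,n)$ in the stated ranges with $mn\in J_{k}$. Summing over $m$ first: for each $m\ge q^{\mu-1}$ the number of $n$ with $mn\in J_{k}$ is at most $|J_{k}|/m+1\le 3q^{\nu-\rho+1}+1$, and there are fewer than $q^{\mu}$ admissible $m$; hence the count is $\ll q^{\mu}\bigl(q^{\nu-\rho+1}+1\bigr)\ll q^{\mu+\nu-\rho}$, where we used $\nu-\rho\ge 1$ and allowed the implied constant to depend on $q$. Multiplying by the $O(1)$ choices of $k$ keeps the bound $\ll q^{\mu+\nu-\rho}$, which is a fortiori $\ll\log(q^{\mu+\nu})\,q^{\mu+\nu-\rho}$, giving (and in fact slightly improving) the claim.

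The argument is essentially bookkeeping: the one genuine point is that the hypothesis $2\rho\le\nu-1$ is precisely what forces the perturbation $m'n'-mn$ — and in particular its largest ``new'' term $mq^{\rho}$ — to stay below $q^{\mu+\nu-\rho}$, so that ``not too large a perturbation of $m$ and $n$'' does translate into ``same number of digits for the product''. I expect the only real care needed to be in tracking the ranges and constants. An alternative route — the one the text attributes to Olivier Robert — would instead insert the Fourier expansion of $x\mapsto\lfloor x\rfloor$ into $T_q(mn)-T_q(m'n')$ and estimate the resulting weighted exponential sums over $m$ and $n$ by summation by parts and van der Corput-type bounds, much as in Lemmes \ref{Equa terme principal somme mn SII} and \ref{h2+h0 diff 0}; this is heavier and not needed for the bound as stated.
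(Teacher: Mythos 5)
Your argument is correct, and it takes a genuinely different (and more elementary) route than the paper. Both proofs start the same way, by expanding $m'n'-mn=mq^{\rho}+nq^{\mu-\rho}+q^{\mu}$ and bounding it by $O(q^{\mu+\nu-\rho})$ using $2\rho\le\nu-1$. The paper then converts ``$mn$ lies just below a power of $q$'' into a digital condition (the digits of $mn$ of index between $\mu+\nu+1-\rho$ and the top must all equal $q-1$), bounds the number of bad pairs $(m,n)$ by $\sum_a \tau(a)$ over integers $a$ satisfying this digit constraint, and invokes \cite[Lemma $3.5$]{PrimesDigits} (a divisor sum over integers with prescribed digits); the divisor function is exactly where the factor $\log(q^{\mu+\nu})$ in the stated bound comes from. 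You avoid the divisor function altogether: for each of the $O(1)$ admissible exponents $k$ you fix $m\in[q^{\mu-1},q^{\mu})$ and count the integers $n$ in the interval of length $|J_k|/m\le 3q^{\nu-\rho+1}$, getting $\ll q^{\mu+\nu-\rho}+q^{\mu}\ll q^{\mu+\nu-\rho}$ (the ``$+1$'' per value of $m$ being harmless since $\nu-\rho\ge 1$, and constants may depend on $q$). This direct lattice-point count is simpler, needs no external lemma, and in fact yields the slightly stronger bound $\ll q^{\mu+\nu-\rho}$, which trivially implies the lemma as stated; the paper's divisor-sum route (or the Fourier-series-of-$T_q$ alternative it attributes to Olivier Robert, which you also flagged) is heavier than necessary for this particular statement.
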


\begin{proof}
Observons que $m'n' = mn + mq^\rho + nq^{\mu-\rho} + q^\mu$. Par ailleurs \begin{align*}
mq^\rho + nq^{\mu-\rho} + q^\mu &< q^{\mu+\rho}+nq^{\mu-\rho}+q^\mu
\\ & \leq q^{\mu+\rho+1}+nq^{\mu-\rho}
\\ &  \leq q^{\mu+\nu+1-\rho} .
\end{align*} 

Comme $mn \geq q^{\mu+\nu-4}$, pour que la taille de $mn$ soit perturb\'ee par l'ajout d'un terme inf\'erieur \`a $q^{\mu+\nu+1-\rho}$, il faut que les chiffres de $mn$ d'indice entre $\mu+\nu+1-\rho$ et $\mu+\nu-4$ soient \'egaux \`a $q-1$. Soit $\mathcal{M}$ le nombre des $(m,n)$ tels que $T_q(mn) \neq T_q(m'n')$. Si \[
\chi(a) = \left \{
\begin{array}{c c}
    1& \quad \text{si} \quad \epsilon_i(a) = q-1, \quad \mu+\nu+1-\rho \leq i \leq \mu+\nu-4 \\
    0& \qquad \text{sinon}, \\
\end{array}
\right.
\] alors \begin{align*}
\mathcal{M} &\leq \ds\sum_{q^{\mu+\nu+1-\rho} \leq a < q^{\mu+\nu}}\tau(a)\chi(a)
\\ & \leq \ds\sum_{b < q^{\mu+\nu+1-\rho}}\ds\sum_{c < q^4}\tau(b+(q-1)q^{\mu+\nu+1-\rho}+\ldots+(q-1)q^{\mu+\nu-4}+q^{\mu+\nu-3}c).
\end{align*}

Nous pouvons alors appliquer \cite[Lemma $3.5$]{PrimesDigits}, avec $x=q^{\mu+\mu+1-\rho}-1+(q-1)q^{\mu+\nu+1-\rho}+\ldots+(q-1)q^{\mu+\nu-2}+q^{\mu+\nu-1}c \leq q^{\mu+\nu+1}$ et $y=q^{\mu+\nu+1-\rho}$ pour pouvoir dire \begin{align*}
\mathcal{M} \ll q^{\mu+\nu-\rho}\log q^{\mu+\nu}.
\end{align*}
\end{proof}

\begin{lemma}\label{Lemme digital premiere troncature SII}
Soient $(\mu,\nu,\rho) \in \N^3$ avec $2\rho < \nu$ et tels que $P(\mu+\nu) \leq \rho$. L'ensemble $\mathcal{E}$ des couples $(m,n) \in \{q^{\mu-2},\ldots, q^{\mu}-1\}\times \{q^{\nu-2},\ldots,q^{\nu}-1\}$ tels qu'il existe $k < q^{\mu+\rho}$ avec $f_P(mn+k)\overline{f_P(mn)}\neq f_P^{(\mu+2\rho)}(mn+k)\overline{f_P^{(\mu+2\rho)}(mn)} $ satisfait \`a \begin{equation*}
\# \mathcal{E} \ll (\log q)q^{\mu+\nu-\rho+P(\mu+\nu+1)}.
\end{equation*}
\end{lemma}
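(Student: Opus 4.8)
The plan is to argue by carry propagation, in the spirit of Lemme~\ref{Lemme propa SI} and of the classical estimates \cite[Section~11]{RudinShapiro}, \cite[Proposition~4.2]{Hanna1}, but with the product $mn$ in the r\^ole of the free variable, so that pairs $(m,n)$ are counted by running over $m$ and counting the admissible $n$ in short intervals. Write $\mathcal{E}=\mathcal{E}_1\cup\mathcal{E}_2$, where $\mathcal{E}_1$ is the set of pairs for which there is some $k<q^{\mu+\rho}$ with $T_q(mn+k)\neq T_q(mn)$, and $\mathcal{E}_2=\mathcal{E}\setminus\mathcal{E}_1$. Since $q^{\mu+\nu-4}\leq mn<q^{\mu+\nu}$ and $q^{\mu+\rho}<q^{\mu+\nu}$ (because $2\rho<\nu$), membership in $\mathcal{E}_1$ forces $mn\in(q^{e}-q^{\mu+\rho},q^{e})$ for one of the $O(1)$ exponents $e\in\{\mu+\nu-3,\dots,\mu+\nu\}$; for a fixed $m\in[q^{\mu-2},q^{\mu})$ the number of admissible $n$ is at most $q^{\mu+\rho}/m+1$, and summing over $m$ gives $\#\mathcal{E}_1\ll q^{\mu+\rho}\log q$, which is $\ll (\log q)\,q^{\mu+\nu-\rho+P(\mu+\nu+1)}$ because $2\rho<\nu\leq\nu+P(\mu+\nu+1)$.

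Next I would treat $\mathcal{E}_2$. Fix $(m,n)\in\mathcal{E}_2$ and let $y=T_q(mn)=T_q(mn+k)$, the equality holding for every $k<q^{\mu+\rho}$; as $mn+k<q^{\mu+\nu+1}$ one has $y\leq\mu+\nu$, whence $P(y)\leq P(\mu+\nu)\leq\rho$ by monotonicity and the hypothesis. The key observation is that for any integer $N$ the lower-order blocks in $a_P(N)$ and $a_P^{(\mu+2\rho)}(N)$ cancel, while the blocks straddling position $\mu+2\rho$ contribute nothing to $a_P^{(\mu+2\rho)}(N)$, so that $a_P(N)-a_P^{(\mu+2\rho)}(N)$ depends only on the digits of $N$ at positions $\geq\mu+2\rho-P(T_q(N))$. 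Applying this to $N=mn$ and $N=mn+k$, which share the same $y$ hence the same dependence, the inequality defining $\mathcal{E}$ forces $a_P(mn)-a_P^{(\mu+2\rho)}(mn)\neq a_P(mn+k)-a_P^{(\mu+2\rho)}(mn+k)$ for the witnessing $k$, and therefore the digits of $mn$ and $mn+k$ must differ at some position $\geq\mu+2\rho-P(y)\geq\mu+\rho$. Since $k<q^{\mu+\rho}$, a discrepancy at a position $\geq\mu+\rho$ can only be produced by a carry leaving position $\mu+\rho-1$ and propagating through a block of consecutive digits of $mn$ all equal to $q-1$; for it to reach position $\mu+2\rho-P(y)$ we must have $\epsilon_j(mn)=q-1$ for all $j$ with $\mu+\rho\leq j\leq\mu+2\rho-P(y)-1$, i.e.\ a run of $\rho-P(y)$ digits (the run being empty, and the conclusion trivial, in the degenerate case $P(y)=\rho$).

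It then remains to count the pairs $(m,n)$ with $T_q(mn)=y$ and this prescribed run of $(q-1)$'s. Writing $mn=c\,q^{\mu+2\rho-P(y)}+(q^{\rho-P(y)}-1)q^{\mu+\rho}+d$ with $0\leq d<q^{\mu+\rho}$, one has $c<q^{\nu-2\rho+P(y)}$; for fixed $m\in[q^{\mu-2},q^{\mu})$ and fixed $c$ the number of $n$ with $mn$ in the corresponding interval of length $q^{\mu+\rho}$ is $\leq q^{\mu+\rho}/m+1\ll q^{\rho+2}$, so the number of such pairs is $\ll q^{\mu}\cdot q^{\nu-2\rho+P(y)}\cdot q^{\rho+2}=q^{\mu+\nu-\rho+P(y)+2}$. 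Summing over the $O(1)$ possible values of $y\leq\mu+\nu$ and using $P(y)\leq P(\mu+\nu+1)$ gives $\#\mathcal{E}_2\ll q^{\mu+\nu-\rho+P(\mu+\nu+1)}$, the factor $q^{2}$ being absorbed in the $q$-dependent implied constant. Alternatively, as in the proof of Lemme~\ref{Lemme Olivier Robert}, this last count can be written as a sum of $\tau(mn)$ over a digit box and estimated by \cite[Lemma~3.5]{PrimesDigits}. Combining with the bound for $\mathcal{E}_1$ yields the claim.

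The main obstacle is the middle step: one must check that, once the order of magnitude of the product is preserved, the doubly-truncated identity can fail only through a genuine carry crossing position $\mu+2\rho-P(y)$, which rests on careful bookkeeping of the interplay between the perturbation scale $q^{\mu+\rho}$, the truncation level $q^{\mu+2\rho}$ and the block length $P(T_q(mn))\leq\rho$ — the inequality $P(\mu+\nu)\leq\rho$ is precisely what guarantees that the block length does not exceed the buffer of width $\rho$ separating positions $\mu+\rho$ and $\mu+2\rho$. Once this is secured, what remains are routine divisor-interval counts.
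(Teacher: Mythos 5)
Your argument is correct, and it reaches the stated bound by a more self-contained route than the paper. The paper's proof writes $mn=lq^{\mu+\rho}+k_1$ and observes that membership in $\mathcal{E}$ forces the high part $l=\lfloor mn/q^{\mu+\rho}\rfloor$ to lie in the exceptional set $\mathcal{B}$ of the carry-propagation Lemme \ref{Lemme propa SI} (applied with $\lambda=\nu-\rho$, $\kappa=\mu+\rho$), giving $\#\mathcal{B}\ll q^{\nu-2\rho+P(\mu+\nu+1)}$, and then converts this into a count of pairs $(m,n)$ with $\lfloor mn/q^{\mu+\rho}\rfloor\in\mathcal{B}$ via \cite[Lemma $7$]{RudinShapiro}, which produces the factor $q^{\mu+\rho}\log q$. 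You instead unpack both ingredients: your identity $a_P(N)-a_P^{(\mu+2\rho)}(N)=\sum_{i\ge\mu+2\rho-P(y)}\epsilon_{i+P(y)}(N)\cdots\epsilon_i(N)$ (valid once $y=T_q(N)$ is fixed) shows that failure can only come from a carry crossing position $\mu+2\rho-P(y)$, so the run of $(q-1)$'s at positions $\mu+\rho,\dots,\mu+2\rho-P(y)-1$ is precisely the explicit description of the bad configurations that the ``techniques classiques sur les retenues'' inside Lemme \ref{Lemme propa SI} encapsulate; your set $\mathcal{E}_1$ (where $T_q(mn+k)\neq T_q(mn)$) plays the role of the $\lambda+1$ exceptional values there; and your multiples-of-$m$-in-short-intervals count replaces \cite[Lemma $7$]{RudinShapiro}. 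What your version buys is transparency and slightly lighter hypotheses: you never invoke the side condition $\rho\le\frac34\lambda$ of Lemme \ref{Lemme propa SI} (which, with $\lambda=\nu-\rho$, is marginally stronger than the stated $2\rho<\nu$), and the degenerate case $P(y)=\rho$ is correctly dispatched since it forces $P(\mu+\nu+1)\ge\rho$ and the claimed bound then exceeds the total number of pairs; what the paper's version buys is brevity, by reusing two lemmas already in place. Your remaining bookkeeping (same $y$ for $mn$ and $mn+k$ on $\mathcal{E}_2$, carry into position $\mu+\rho$ at most one, absorption of the stray $q^{2}$ and of the count of $c$ and $y$ into the $q$-dependent implicit constant) is in order, so the proposal stands as a valid alternative proof.
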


\begin{proof}
Soit $\mathcal{B}$ l'ensemble des $l < q^{\nu-\rho}$ tels qu'il existe $(k_1,k_2) \in \{0, \ldots, q^{\mu+\rho}-1\}^2 $ avec $$a_P(lq^{\kappa}+k_1+k_2)- a_P(lq^{\kappa}+k_1) \neq a_P^{(\kappa + \rho)}(lq^{\kappa}+k_1+k_2) - a_P^{(\kappa + \rho)}(lq^{\kappa}+k_1).$$ Alors nous utilisons le Lemme \ref{Lemme propa SI} avec $\lambda = \nu-\rho$, $\kappa = \mu+\rho$ et $\rho = \rho$ pour obtenir $\#\mathcal{B} \ll  q^{\nu-2\rho+P(\mu+\nu+1)}$.

 Il suffit donc de compter le nombre $\mathcal{N}$ de couples $(m,n) \in \{q^{\mu-2},\ldots, q^{\mu}-1\}\times \{q^{\nu-2},\ldots,q^{\nu}-1\}$ tels que $mn = k_1 + lq^{\mu+\rho} $ avec $l \in \mathcal{B}$, le $k$ de notre énoncé ici correspondant alors à $k_2$. En utilisant \cite[Lemma $7$]{RudinShapiro} avec $\mu' = \mu+\rho$, nous obtenons  \begin{equation*}
\# \mathcal{E} \ll (q^{\mu+\rho}\log q + q^{\mu}+q^{\mu+\rho-\mu+2})\#\mathcal{B} \ll (\log q) q^{\mu+\nu-\rho+P(\mu+\nu+1)}.
\end{equation*}
\end{proof}


\begin{lemma}\label{Lemme digital double troncature}
Soient $(\mu,\nu,\mu_0,\mu_1,\mu_2)\in \N^5$ avec $\mu_0\leq \mu_1 \leq \mu \leq \mu_2 $, $\mu_1-\mu_0 \leq \frac{3}{4}(\mu_2-\mu_0)$ et $2(\mu_2-\mu)\leq \mu_0 $. Supposons \'egalement que $P(\mu_2) \leq \mu_1-\mu_0$. Alors, pour $(a,b,c) \in \N^3 $ l'ensemble $\mathcal{E}(a,b,c)$ des couples $(m,n) \in \{q^{\mu-2},\ldots,q^{\mu}-1\}\times\{q^{\nu-2},\ldots,q^{\nu}-1\}$ tels que \begin{align}
\notag &f_P^{(\mu_2)}\Big(mn+am+bn+c,T_q(mn+am+bn+c)\Big)\\ \notag & \qquad \overline{f_P^{(\mu_1)}\Big(mn+am+bn+c,T_q(mn+am+bn+c)\Big)} 
\\ \label{Equation double troncature lemme} & \neq f_P^{(\mu_2)}\Big(q^{\mu_0}r_{\mu_0,\mu_2}(mn+am+bn+c),T_q(mn+am+bn+c)\Big)\\ \notag & \qquad \overline{f_P^{(\mu_1)}\Big(q^{\mu_0}r_{\mu_0,\mu_2}(mn+am+bn+c),T_q(mn+am+bn+c)\Big)} 
\end{align}  v\'erifie \begin{equation}\label{Equation Lemme digital double troncature}
\# \mathcal{E}(a,b,c) \ll \max (\tau(q),\log q)\mu_2^{\omega(q)}q^{\mu+\nu+\mu_0-\mu_1+P(\mu_2+1)}.
\end{equation}
\end{lemma}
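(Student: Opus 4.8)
The plan is to follow the pattern of the proof of Lemma~\ref{Lemme digital premiere troncature SII}: first reduce the analytic condition defining $\mathcal{E}(a,b,c)$ to a purely digital condition on $N:=mn+am+bn+c$, then bound the number of admissible $N$ by a carry-propagation estimate, and finally transfer this count to the pairs $(m,n)$ by a divisor-function bound. For the digital reduction, put $y:=T_q(N)$ and $\tilde N:=q^{\mu_0}r_{\mu_0,\mu_2}(N)$. From the definition of $r_{\mu_0,\mu_2}$ one gets $\epsilon_k(\tilde N)=\epsilon_k(N)$ for $\mu_0\le k<\mu_2$ and $\epsilon_k(\tilde N)=0$ otherwise, while $a_P(x\bmod q^{\rho},y)=\sum_{0\le i\le\rho-1-P(y)}\prod_{0\le j\le P(y)}\epsilon_{i+j}(x)$ for every $\rho$. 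Applying this with $\rho\in\{\mu_1,\mu_2\}$ to $N$ and to $\tilde N$ and subtracting, the two members of \eqref{Equation double troncature lemme} agree unless $\alpha D\notin\Z$, where
\[
D:=\ds\sum_{\max(0,\,\mu_1-P(y))\,\le\, i\,\le\,\min(\mu_0-1,\,\mu_2-1-P(y))}\ \ \ds\prod_{0\le j\le P(y)}\epsilon_{i+j}(N).
\]
Since the summation range must be non-empty, $(m,n)\in\mathcal{E}(a,b,c)$ forces $P(y)>\mu_1-\mu_0$ and $P(y)\le\mu_2-1$; with $P(\mu_2)\le\mu_1-\mu_0$ and the monotonicity of $P$ the first inequality gives $y>\mu_2$. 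As $D$ is a sum of non-negative integers, $D\neq0$ moreover requires $N$ to carry a block of $P(y)+1$ consecutive non-zero $q$-ary digits in positions $i,\dots,i+P(y)$ with $i\le\mu_0-1$ and $\mu_1\le i+P(y)\le\mu_2-1$.

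Consequently $\mathcal{E}(a,b,c)$ is contained in the set of $(m,n)$ for which $N$ belongs to one of the $O(\mu_2^{2})$ digitally defined sets $\mathcal{B}_{i,L}:=\{\,n:\ \epsilon_i(n)\cdots\epsilon_{i+L-1}(n)\neq0\,\}$, where $(i,L)$ ranges over the pairs with $0\le i\le\mu_0-1$, $\mu_1-i+1\le L\le\mu_2-i$, and (for the pair in question) $L=P(T_q(N))+1$. Each such $\mathcal{B}_{i,L}$ is exactly the kind of configuration handled by the classical treatment of carries underlying Lemma~\ref{Lemme propa SI} (cf.\ \cite[Section~11]{RudinShapiro}, \cite[Proposition~4.2]{Hanna1}); since for the relevant pairs $L=P(y)+1$ satisfies $\mu_1-\mu_0+1\le L\le\mu_2-1$, the monotonicity of $P$ and $P(\mu_2)\le\mu_1-\mu_0$ let one replace the variable length $L$ by the single exponent $P(\mu_2+1)$, and summing over the $O(\mu_2^{2})$ admissible $(i,L)$ shows that the set of integers $N$ that can occur has density $\ll(\log q)\,\mu_2^{O(1)}\,q^{-(\mu_1-\mu_0)+P(\mu_2+1)}$.

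It remains to turn this into a count of pairs. Writing $N=mn+am+bn+c=(m+b)(n+a)+(c-ab)$, the number of $(m,n)\in\{q^{\mu-2},\dots,q^{\mu}-1\}\times\{q^{\nu-2},\dots,q^{\nu}-1\}$ with $N$ in a prescribed union of such sets is governed by a mean value of the divisor function and is estimated by \cite[Lemma~7]{RudinShapiro} (alternatively by \cite[Lemma~3.5]{PrimesDigits}, as in the proof of Lemma~\ref{Lemme Olivier Robert}); the hypotheses $\mu_0\le\mu_1\le\mu\le\mu_2$ and $2(\mu_2-\mu)\le\mu_0$ are precisely what is needed for its application. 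This multiplies the preceding density by $q^{\mu+\nu}$, introduces the arithmetic factor $\max(\tau(q),\log q)$, and absorbs the $\mu_2^{O(1)}$ into $\mu_2^{\omega(q)}$, yielding \eqref{Equation Lemme digital double troncature}.

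I expect the main obstacle to be the middle step. In contrast with Lemma~\ref{Lemme digital premiere troncature SII}, the discrepancy here is created not by an additive carry but by the block-counting window of the doubly truncated function over-reaching below level $\mu_0$, and the length $P(T_q(N))+1$ of the forced digit block depends on $N$; the delicate point is to organise the count so that monotonicity of $P$ and $P(\mu_2)\le\mu_1-\mu_0$ jointly confine $T_q(N)$ and collapse the variable block length to $P(\mu_2+1)$, and then to feed the resulting finite family of digit patterns into the carry-propagation estimate and the divisor bound while losing no more than the polynomial-in-$\mu_2$ factors that appear in \eqref{Equation Lemme digital double troncature}.
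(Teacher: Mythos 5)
Your reduction of \eqref{Equation double troncature lemme} to the digital condition $\alpha D\notin\Z$ is correct, as is the observation that a bad pair forces a run of $P(y)+1$ consecutive nonzero digits of $N=mn+am+bn+c$ at positions $i,\dots,i+P(y)$ with $i\le\mu_0-1$ and $\mu_1\le i+P(y)\le\mu_2-1$, hence $P(y)>\mu_1-\mu_0$ and, by monotonicity and $P(\mu_2)\le\mu_1-\mu_0$, $y=T_q(N)>\mu_2$. The genuine gap is the middle density estimate. The sets $\mathcal{B}_{i,L}$ you introduce are defined by digits being \emph{nonzero}, an event of probability $(q-1)/q$ per digit, so each has density of order $((q-1)/q)^{L}$; the bound you assert, $(\log q)\,\mu_2^{O(1)}q^{-(\mu_1-\mu_0)+P(\mu_2+1)}$, would require a saving of a full factor $q^{-1}$ per constrained digit and is false for $q\ge 3$ (take $P(\mu_2+1)=0$ and $\mu_1-\mu_0$ large: already the necessary condition that the digits in positions $\mu_0-1,\dots,\mu_1$ be nonzero has density $((q-1)/q)^{\mu_1-\mu_0+2}\gg q^{-(\mu_1-\mu_0)}$). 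The results you invoke, Lemme \ref{Lemme propa SI} and the carry discussions of \cite[Section 11]{RudinShapiro}, \cite[Proposition 4.2]{Hanna1}, yield savings of the shape $q^{-\rho+P}$ precisely because carries force digits to be \emph{equal to} $q-1$; they say nothing about runs of merely nonzero digits. Moreover the step ``replace the variable length $L$ by the single exponent $P(\mu_2+1)$'' has no justification: monotonicity only gives $P(y)\ge P(\mu_2+1)$, a lower bound on the run length, and even in base $2$ (where nonzero means $1$ and a correct density bound does hold) the saving comes from the run having to cover the whole window $[\mu_0-1,\mu_1]$, not from $P(\mu_2+1)$.

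This is also not the route the paper takes. The paper never counts runs of nonzero digits: it introduces the set $\mathcal{B}$ of middle strings $l<q^{\mu_2-\mu_0}$ for which adding some $k_2<q^{\mu_0}$ alters the difference between the $\mu_2$- and $\mu_1$-truncations — a carry-propagation event, in which the block length in play is $P(T_q(q^{\mu_0}l+k_1+k_2))\le P(\mu_2)\le\mu_1-\mu_0$ since the integers involved are smaller than $q^{\mu_2}$ — bounds $\#\mathcal{B}\ll q^{\mu_2-\mu_1+P(\mu_2+1)}$ by Lemme \ref{Lemme propa SI} (with $\lambda=\mu_2-\mu_0$, $\kappa=\mu_0$, $\rho=\mu_1-\mu_0$), and then transfers to the pairs $(m,n)$ as in \cite[Lemma 9]{RudinShapiro}; your final divisor-function step is the same transfer, but it cannot repair the missing density estimate. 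Your own reduction does isolate exactly where the difficulty sits: bad pairs can only occur when $P(T_q(N))>\mu_1-\mu_0$, a regime the later hypotheses of the paper (for instance \eqref{Estimation P(mu+nu+1) < 1/3}) rule out; but as written your argument does not establish \eqref{Equation Lemme digital double troncature} under the lemma's stated hypotheses.
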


\begin{proof}
Soit $\mathcal{B}$ l'ensemble des $l \in \{0,\ldots,q^{\mu_2-\mu_0}-1\}$ tel qu'il existe $(k_1,k_2) \in \{0,\ldots,q^{\mu_0}-1\}^2$ avec $$f_P^{(\mu_2)}(q^{\mu_0}l+k_1+k_2)\overline{f_P^{(\mu_2)}(q^{\mu_0}l+k_1)} \neq f_P^{(\mu_1)}(q^{\mu_0}l+k_1+k_2)\overline{f_P^{(\mu_1)}(q^{\mu_0}l+k_1)}. $$

 Pour $0 \leq l \leq q^{\mu_2-\mu_0}-2$, nous avons $0 \leq q^{\mu_0}l+k_1+k_2 \leq q^{\mu_2}-2 $ et ainsi $$f_P^{(\mu_2)}(q^{\mu_0}l+k_1+k_2)\overline{f_P^{(\mu_2)}(q^{\mu_0}l+k_1)} = f_P(q^{\mu_0}l+k_1+k_2)\overline{f_P(q^{\mu_0}l+k_1)}$$ sauf possiblement si $l = q^{\mu_2-\mu_0}-1$. Comme $\mu_1-\mu_0 \leq 3/4(\mu_2-\mu_0)$, nous pouvons utiliser le Lemme \ref{Lemme propa SI} avec $\lambda =\mu_2-\mu_0$, $\kappa = \mu_0$, $\rho = \mu_1-\mu_0$, et donc \begin{equation}\label{Equation card B}
\# \mathcal{B} = O(q^{\mu_2-\mu_0-(\mu_1-\mu_0)+P(\mu_2-\mu_0+\mu_0+1)})=O(q^{\mu_2-\mu_1+P(\mu_2+1)}).
\end{equation}  Le reste de la preuve est identique à \cite[Lemma $9$]{RudinShapiro}.

\end{proof}

 L'objet du lemme suivant est de dire que la fonction doublement tronqu\'ee a une transform\'ee de Fourier qui d\'ecro\^it. Pour l'expliciter nous introduisons un objet interm\'ediaire.

Soit $G_{\mu_0,\lambda}(t,k)= \ds\frac{1}{q^{\lambda}}\ds\sum_{0 \leq u < q^{\lambda}}f_P^{(\mu_1,\mu_2)}(uq^{\mu_0},k)e\left(-\frac{ut}{q^{\lambda}} \right)$.

\begin{lemma}\label{Estimation Fourier SII}
Uniform\'ement pour $\lambda \in \N$ tel que \begin{equation}\label{Encadrement lambda}
\frac{1}{3}(\mu_2-\mu_0)\leq \lambda \leq \frac{4}{5}(\mu_2-\mu_0),
\end{equation} et tout entier $k$ tel que \begin{equation}\label{Equation P \'enonc\'e}
 P(k) \leq \frac{1}{3}(\mu_1-\mu_0)
\end{equation} et $t \in \R$, nous avons : \begin{equation}
\ds\sum_{0 \leq h < q^{\mu_2-\mu_0-\lambda}}|G_{\mu_0,\mu_2-\mu_0}(h+t,k)|^2 \ll q^{1/2(\mu_1-\mu_0-\gamma_P(\lambda,k))+3P(k)/4}(\log q^{\mu_2-\mu_1})^2.
\end{equation}
\end{lemma}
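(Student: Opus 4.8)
L'objectif est de majorer $\sum_{0\le h<q^{\mu_2-\mu_0-\lambda}}|G_{\mu_0,\mu_2-\mu_0}(h+t,k)|^2$ en exploitant la structure de la fonction doublement tronqu\'ee $f_P^{(\mu_1,\mu_2)}$. L'id\'ee centrale, h\'erit\'ee de \cite{RudinShapiro}, est de factoriser l'argument $u<q^{\mu_2-\mu_0}$ sous la forme $u=a+bq^{\lambda}$ avec $0\le a<q^\lambda$ et $0\le b<q^{\mu_2-\mu_0-\lambda}$, puis d'isoler la partie de bas poids, qui ne d\'epend que de la troncature $f_P^{(\mu_0+\lambda)}$, et une partie de haut poids. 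Grâce \`a l'hypoth\`ese \eqref{Equation P \'enonc\'e}, qui garantit $P(k)\le\frac13(\mu_1-\mu_0)\le\mu_1-\mu_0$, les blocs de longueur $P(k)$ comptabilis\'es ne chevauchent qu'un nombre contr\^ol\'e de fronti\`eres entre ces deux morceaux, ce qui permet d'\'ecrire $f_P^{(\mu_1,\mu_2)}(uq^{\mu_0},k)$ comme un produit d'un facteur purement bas poids et d'un facteur de module $1$ d\'ependant des deux morceaux, \`a une erreur pr\`es support\'ee sur un ensemble exceptionnel de cardinal $O(q^{\mu_2-\mu_0-\lambda}\cdot q^{\lambda-\gamma_P(\lambda,k)}\cdot\text{(facteur polynomial)})$, contr\^ol\'e par une variante du Lemme \ref{Lemme propa SI}.

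\textbf{Les \'etapes.} D'abord, je d\'evelopperais le carr\'e et sommerais sur $h$ : par l'identit\'e de Parseval (orthogonalit\'e des caract\`eres additifs modulo $q^{\mu_2-\mu_0-\lambda}$), $\sum_h|G_{\mu_0,\mu_2-\mu_0}(h+t,k)|^2$ se ram\`ene \`a une somme de corr\'elations $\sum_{b}|\,\frac{1}{q^\lambda}\sum_{a<q^\lambda}f_P^{(\mu_1,\mu_2)}((a+bq^\lambda)q^{\mu_0},k)e(-at/q^\lambda)\,|^2$ sur $0\le b<q^{\mu_2-\mu_0-\lambda}$, de mani\`ere analogue \`a \cite[equation $(25)$]{RudinShapiro} et \`a la Remarque \ref{Remarque sur la fonction gammaP}. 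Ensuite, pour chaque $b$ hors de l'ensemble exceptionnel, on reconna\^it dans la somme int\'erieure une somme de Fourier d'une fonction $\beta$-r\'ecursive avec $\beta=P(k)$, comme dans le Lemme \ref{Lemme premiere estimation Fourier}, ce qui donne la borne $q^{\gamma_P(\lambda,k)}$ (ou plut\^ot son carr\'e apr\`es normalisation par $q^\lambda$, d'o\`u un facteur $q^{-2\gamma_P(\lambda,k)}\cdot q^{2\lambda}/q^{2\lambda}$ combin\'e avec le comptage des $b$). Pour l'ensemble exceptionnel, de cardinal $\ll q^{\mu_2-\mu_0-\lambda-(\mu_1-\mu_0)+P(\mu_2)}\cdot(\log q^{\mu_2-\mu_1})^{O(1)}$ via le Lemme \ref{Lemme digital double troncature} ou une adaptation du Lemme \ref{Lemme propa SI}, on majore trivialement chaque terme par $1$. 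En \'equilibrant les deux contributions \`a l'aide de \eqref{Encadrement lambda} et de l'encadrement $\gamma_P(\lambda,k)\le\lambda/2$ de la Remarque \ref{Remarque sur la fonction gammaP}, on obtient l'exposant $\frac12(\mu_1-\mu_0-\gamma_P(\lambda,k))+\frac34 P(k)$ annonc\'e, le terme $3P(k)/4$ provenant du recouvrement des blocs pr\`es des fronti\`eres et des pertes dans le comptage de l'ensemble exceptionnel.

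\textbf{Le point d\'elicat.} La difficult\'e principale n'est pas l'analyse harmonique — qui suit le sch\'ema de \cite{RudinShapiro} — mais la gestion digitale pr\'ecise du fait que $f_P^{(\mu_1,\mu_2)}$ compte des blocs de longueur $P(k)$ variable, ce qui couple les chiffres d'indices voisins et emp\^eche une factorisation exacte en $u=a+bq^\lambda$. Il faut donc quantifier finement combien de valeurs de $a$ (resp. de $b$) sont ``contamin\'ees'' par un bloc \`a cheval sur la fronti\`ere $\lambda$ ou sur les fronti\`eres $\mu_0,\mu_1,\mu_2$ de troncature, et c'est l\`a qu'interviennent de mani\`ere cruciale les hypoth\`eses \eqref{Equation P \'enonc\'e} et $2(\mu_2-\mu)\le\mu_0$ (implicites via les lemmes pr\'ec\'edents) pour garantir que le nombre de telles fronti\`eres reste born\'e et que la perte correspondante n'est que $q^{O(P(k))}$, absorb\'ee dans le facteur $q^{3P(k)/4}$. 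La v\'erification que les conditions \eqref{Encadrement lambda} et \eqref{Equation P \'enonc\'e} suffisent \`a faire fonctionner simultan\'ement l'application du Lemme \ref{Lemme digital double troncature} (qui requiert $\mu_1-\mu_0\le\frac34(\mu_2-\mu_0)$ et $P(\mu_2)\le\mu_1-\mu_0$) et celle du Lemme \ref{Lemme premiere estimation Fourier} (qui requiert $P(\kappa+l)\le l$, ici transpos\'e en $P(k)\le\lambda$, cons\'equence de \eqref{Encadrement lambda} et \eqref{Equation P \'enonc\'e}) constituera le c\oe ur technique du raisonnement.
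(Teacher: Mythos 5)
Votre sch\'ema g\'en\'eral (d\'ecoupage $u=a+bq^{\lambda}$, terme principal trait\'e par la propri\'et\'e de Fourier, terme d'erreur contr\^ol\'e par la propagation des retenues puis major\'e trivialement) est bien celui de la preuve, mais il manque l'ingr\'edient qui la fait effectivement aboutir : l'introduction d'une troncature auxiliaire au niveau $\mu_0+\lambda+\rho_3$, avec un param\`etre libre $1\leq\rho_3\leq\mu_2-\mu_0-\lambda$. Ce param\`etre joue deux r\^oles que votre r\'edaction ne couvre pas. D'une part, le Lemme \ref{Lemme premiere estimation Fourier} s'applique \`a $f_P$, pas \`a la fonction doublement tronqu\'ee $f_P^{(\mu_1,\mu_2)}$ : c'est le remplacement de la troncature en $\mu_2$ par une troncature en $\mu_0+\lambda+\rho_3$ (avec un ensemble exceptionnel $\widetilde{\mathcal{W}_\lambda}$), puis l'introduction du r\'esidu $w=v\bmod q^{\rho_3}$ pour d\'ecoupler les deux morceaux, qui font appara\^itre une somme compl\`ete $\frac{1}{q^{\lambda+\rho_3}}\sum_{u'<q^{\lambda+\rho_3}}f_P(u'q^{\mu_0},k)e(\cdots)$ \`a laquelle l'estimation $q^{-\gamma_P(\lambda+\rho_3,k)}$ s'applique, sous la condition $P(k)\leq\lambda+\rho_3$. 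Votre inner sum sur $a<q^{\lambda}$, o\`u figure encore la double troncature et o\`u les chiffres de $a$ et de $b$ se couplent \`a travers les fronti\`eres $\lambda$, $\mu_1$, $\mu_2$, n'est pas directement justiciable de ce lemme, et c'est pr\'ecis\'ement de ce d\'etour (Cauchy--Schwarz compris) que provient le facteur $(\log q^{\mu_2-\mu_1})^2$, absent de votre argumentation.

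D'autre part, et surtout, c'est $\rho_3$ qui fournit le degr\'e de libert\'e de l'\'equilibrage que vous invoquez : dans la preuve, le terme principal donne $q^{3\rho_3+2(\mu_1-\mu_0)-2\gamma_P(\lambda,k)}(\log q^{\mu_2-\mu_1})^2$, le terme d'erreur $q^{-\rho_3+P(k)}$ (le cardinal de l'ensemble exceptionnel d\'epend de $\rho_3$ : il est $O(q^{\mu_2-\mu_0-\rho_3+P(k)})$, par un argument de retenues du type du Lemme \ref{Lemme propa SI}, et non par le Lemme \ref{Lemme digital double troncature}, qui compte des couples $(m,n)$ et n'a pas sa place ici), et le choix $\rho_3=\max\left(1,\tfrac12(\gamma_P(\lambda,k)-\mu_1+\mu_0)+\tfrac14P(k)\right)$ donne exactement l'exposant $\tfrac12(\mu_1-\mu_0-\gamma_P(\lambda,k))+\tfrac34P(k)$ ; les hypoth\`eses \eqref{Encadrement lambda} et \eqref{Equation P \'enonc\'e} servent alors \`a v\'erifier $\rho_3\leq\mu_2-\mu_0-\lambda$ et $P(k)\leq\lambda+\rho_3$. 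Dans votre version, les deux contributions ont des exposants fig\'es (vous annoncez essentiellement $q^{-2\gamma_P(\lambda,k)}$ pour le terme principal et un comptage en $q^{\mu_2-\mu_0-\lambda-(\mu_1-\mu_0)+P(\mu_2)}$ pour l'exceptionnel) : il n'y a rien \`a \'equilibrer, leur somme ne reproduit ni le terme $\tfrac12(\mu_1-\mu_0)$ ni le $q^{3P(k)/4}$ de l'\'enonc\'e, et votre explication du $3P(k)/4$ par un recouvrement de blocs aux fronti\`eres est inexacte : ce terme est un pur artefact de l'optimisation en $\rho_3$. En l'\'etat, l'exposant annonc\'e n'est donc pas atteint par votre argument.
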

\begin{proof}
De même que \cite[Lemma $11$]{RudinShapiro}, en observant que les conditions imposées permettent de se passer de la troncature en $\mu_2$ et ainsi d'en introduire une autre, nous pouvons écrire :


$$G_{\mu_0,\mu_2-\mu_0}(t,k) = G_{\mu_0,\mu_2-\mu_0,\lambda,1}(t,k) + G_{\mu_0,\mu_2-\mu_0,\lambda,2}(t,k), $$ avec \begin{align*}
 &G_{\mu_0,\mu_2-\mu_0,\lambda,1}(t,k):= \frac{1}{q^{\mu_2-\mu_0-\lambda}}\ds\sum_{0 \leq v < q^{\mu_2-\mu_0-\lambda}}f_P(vq^{\mu_0+\lambda},k)e\left(- \frac{vq^\lambda t}{q^{\mu_2-\mu_0}} \right)
\\ & \quad \cdot \frac{1}{q^{\lambda}}\ds\sum_{0 \leq u < q^{\lambda}}f_P^{(\mu_0+\lambda+\rho_3)}(q^{\mu_0}(u+vq^{\lambda}),k)\overline{f_P^{(\mu_0+\lambda+\rho_3)}(vq^{\mu_0+\lambda},k)f_P^{(\mu_1)}(uq^{\mu_0},k)}e\left(- \frac{ut}{q^{\mu_2-\mu_0}} \right),
\end{align*}
qui est le terme principal, et 
\begin{align*}
 &G_{\mu_0,\mu_2-\mu_0,\lambda,2}(t,k):= \frac{1}{q^{\mu_2-\mu_0}}\ds\sum_{(u,v) \in \widetilde{\mathcal{W}_\lambda}}f_P(vq^{\mu_0+\lambda},k)f_P^{(\mu_1)}(uq^{\mu_0},k)e\left(- \frac{(u+vq^\lambda) t}{q^{\mu_2-\mu_0}} \right)
\\ & \quad \cdot \bigg(f_P(q^{\mu_0}(u+vq^{\lambda}),k)\overline{f_P(vq^{\mu_0+\lambda},k)}-f_P^{(\mu_0+\lambda+\rho_3)}(q^{\mu_0}(u+vq^{\lambda}),k)\overline{f_P^{(\mu_0+\lambda+\rho_3)}(vq^{\mu_0+\lambda},k)}\bigg),
\end{align*}
qui est le terme d'erreur, où \begin{equation}\label{equation rho3}
1 \leq \rho_3 \leq \mu_2-\mu_0-\lambda.
\end{equation}

Si nous introduisons dans $G_{\mu_0,\mu_2-\mu_0,\lambda,1}(t,k)$ le r\'esidu $w$ de $v \bmod q^{\rho_3}$ dans le but de rendre les variables $u$ et $v$ ind\'ependantes (notons que contrairement aux sommes de type I il n'y a pas de difficult\'es ici : la taille est fix\'ee), nous obtenons en suivant pas à pas la preuve de \cite{RudinShapiro} :
\begin{equation*}
G_{\mu_0,\mu_2-\mu_0,\lambda,1}(t,k)  = \ds\sum_{0 \leq l < q^{\rho_3}}\frac{\tilde{c_l}(t)}{q^{\mu_2-\mu_0-\lambda}}\ds\sum_{0 \leq v < q^{\mu_2-\mu_0-\lambda}}f_P(vq^{\mu_0+\lambda},k)e\left( - \frac{vt}{q^{\mu_2-\mu_0-\lambda}}+\frac{vl}{q^{\rho_3}}\right)
\end{equation*} où on a pos\'e \begin{equation*}
\tilde{c_l}(t) = \frac{1}{q^{\rho_3}}\ds\sum_{0 \leq w < q^{\rho_3}}c_\lambda(w,t)e\left(-\frac{wl}{q^{\rho_3}} \right)
\end{equation*} et 
\begin{align*}
& c_\lambda(w,t) = \\ & \frac{1}{q^\lambda}\ds\sum_{0 \leq u < q^\lambda}f_P^{(\mu_0+\lambda+\rho_3)}(q^{\mu_0}(u+wq^{\lambda}),k)\overline{f_P^{(\mu_0+\lambda+\rho_3)}(wq^{\mu_0+\lambda},k)f_P^{(\mu_1)}(uq^{\mu_0},k)}e\left(- \frac{ut}{q^{\mu_2-\mu_0}} \right).
\end{align*} 
Si nous supposons \begin{equation}\label{Equation restriction sur P pour transfo Fourier}
P(k) \leq \lambda + \rho_3,
\end{equation} une démonstration analogue au Lemme \ref{Lemme premiere estimation Fourier} nous donne l'estimation uniforme suivante $$\left|\frac{1}{q^{\lambda+\rho_3}}\ds\sum_{0 \leq u' < q^{\lambda+\rho_3}}f_P(u'q^{\mu_0},k)e\left(-\frac{u't}{q^{\mu_2-\mu_0}}-\frac{l'u'}{q^{\lambda+\rho_3}} \right)\right| \leq q^{-\gamma_P(\lambda+\rho_3,k)}.$$ 
Suivant \cite[p $2626$]{RudinShapiro}, nous rattachons cette estimation à $c_{\lambda}(w,t)$ en effectuant des jeux d'écriture par rapport à la fonction tronquée. Ces calculs sont assez lourds à écrire, et, du fait qu'ici la taille ne change pas, ne présentent aucune nouveauté par rapport à \cite{RudinShapiro} Suivant toujours le fil de la preuve, l'identité de Cauchy-Schwarz, la croissance de $\gamma_P(l,k)$ en $l$ assurée par \eqref{Forme gamma deuxieme prop fourier} et l'orthogonalité des caractères permettent alors de conclure à
\begin{equation}\label{Equation gros theoreme Fourier, type I}
 \ds\sum_{0 \leq h < q^{\mu_2-\mu_0-\lambda}}|G_{\mu_0,\mu_2-\mu_0,\lambda,1}(h+t,k)|^2 \ll q^{3 \rho_3+2(\mu_1-\mu_0)-2\gamma_P(\lambda,k)}(\log q^{\mu_2-\mu_1})^2.
\end{equation} 

Pour le terme d'erreur, 
par des résultats classiques sur les propagations des retenues (là encore, voir \cite{RudinShapiro} ou \cite{Hanna1}), le nombre de $v \in \{0, \ldots , q^{\mu_2-\mu_0-\lambda}\}$ tels qu'il existe $u \in \{0,\ldots,q^{\lambda}-1\}$ pour lequel \begin{equation}\label{Equation pour lemme G}
f_P(uq^{\mu_0}+vq^{\mu_0+\lambda},k)\overline{f_P(vq^{\mu_0+\lambda},k)} \neq f_P^{(\mu_0+\lambda+\rho_3)}(uq^{\mu_0}+vq^{\mu_0+\lambda},k)\overline{f_P^{(\mu_0+\lambda+\rho_3)}(vq^{\mu_0+\lambda},k)}
\end{equation} est $O\left(q^{\mu_2-\mu_0-\lambda-\rho_3+P(k)}\right)$. Ainsi, en sommant sur $u$, l'ensemble $\widetilde{\mathcal{W}_\lambda}$ des couples $(u,v)$ satisfaisant \`a \eqref{Equation pour lemme G} v\'erifie \begin{equation}\label{Equation cardinal estim fourier double tronquee}
\# \widetilde{\mathcal{W}_\lambda} \ll q^{\mu_2-\mu_0-\rho_3+P(k)}.
\end{equation}En développant les modules au carré et en utilisant l'orthogonalité des caractères, en suivant le fil que la preuve de \cite{RudinShapiro}, on peut montrer que : 
 \begin{equation}\label{Equation gros theoreme Fourier type II}
 \ds\sum_{0 \leq h < q^{\mu_2-\mu_0-\lambda}}|G_{\mu_0,\mu_2-\mu_0,\lambda,2}(h+t,k)|^2  \ll \ds\frac{\#\widetilde{\mathcal{W}_\lambda}}{q^{\mu_2-\mu_0}} \ll q^{-\rho_3+P(k)}.
\end{equation}
En rassemblant \eqref{Equation gros theoreme Fourier, type I} et \eqref{Equation gros theoreme Fourier type II}, nous obtenons \begin{equation*}
 \ds\sum_{0 \leq h < q^{\mu_2-\mu_0-\lambda}}|G_{\mu_0,\mu_2-\mu_0}(h+t,k)|^2 \ll q^{-\rho_3+P(k)} + q^{3 \rho_3+2(\mu_1-\mu_0)-2\gamma_P(\lambda,k)}(\log q^{\mu_2-\mu_1})^2,
\end{equation*} et en posant $\rho_3 = \max(1,\frac{1}{2}(\gamma_P(\lambda,k)-\mu_1+\mu_0)+P(k)/4)$, nous avons le r\'esultat demand\'e. 
Par ailleurs \begin{align*}
& \frac{1}{2}(\gamma_P(\lambda,\kappa)-\mu_1+\mu_0)+\frac{P(k)}{4} \leq \mu_2-\mu_0-\lambda
\\ & \qquad \Leftrightarrow \lambda + \frac{\gamma_P(\lambda,k)}{2} \leq (\mu_2-\mu_0)+\frac{1}{2}(\mu_1-\mu_0)-\frac{P(k)}{4},
\end{align*} or, comme $\gamma_P(\lambda,k) \leq \lambda /2$, par \eqref{Encadrement lambda} nous avons $\lambda + \frac{\gamma_P(\lambda,k)}{2} \leq (\mu_2-\mu_0)$. La condition \eqref{Equation P \'enonc\'e} permet de conclure \`a \eqref{equation rho3}.

Enfin, toujours par \eqref{Equation P \'enonc\'e}, nous avons 
\begin{align*}
P(k) \leq \frac{1}{3}(\mu_1-\mu_0) \leq \frac{1}{3}(\mu_2-\mu_0) \leq \lambda, 
\end{align*} o\`u la derni\`ere in\'egalit\'e r\'esulte de \eqref{Encadrement lambda}, et donc \eqref{Equation restriction sur P pour transfo Fourier} est v\'erifi\'ee.
\end{proof}

\section{Sommes de type II}\label{Section SII}

Soient $M$ et $N$ des entiers tels que $1 \leq M \leq N$, nous notons, tout comme dans la Partie \ref{Section SI}, $q^{\mu-1} \leq M < q^\mu$ et $q^{\nu-1} \leq N < q^\nu$. Nous supposons ici en outre \begin{equation}\label{Condis sur mu et nu SII}
\frac{1}{4}(\mu+\nu) \leq \mu \leq \nu \leq \frac{3}{4}(\mu+\nu)
\end{equation}

Soit $\vartheta \in \R, a_m \in \C, b_n \in \C$ avec $|a_m| \leq 1, |b_n| \leq 1$. Nous \'ecrivons alors

\begin{equation}\label{def SII}
S_{II}(\vartheta) := \ds\sum_{\frac{M}{q} < m \leq M}\ds\sum_{\frac{N}{q} < n \leq N}a_mb_nf_P(mn)e(\vartheta mn).
\end{equation} 

Nous allons prouver le
\begin{theorem}
Soit $P : \N \rightarrow \N$ telle que \begin{equation}\label{Grosse restriction sur P}
P(\mu+\nu+1) \leq \frac{2}{3}\bigg\lfloor \frac{1}{16} \gamma_P\left(\frac{\mu+\nu}{640}, \, \mu+\nu-2\right) \bigg\rfloor.
\end{equation}
Alors nous avons uniform\'ement en $\vartheta \in \R$ \begin{equation}\label{equation resulta SII}
\left|S_{II}(\vartheta)\right| \ll (\max\big(\tau(q),(\log q)^3\big)(\mu+\nu)^{\omega(q)+3}\big)^{1/4}q^{\mu+\nu-\frac{1}{64}\gamma_P( \frac{\mu+\nu}{640},\, \mu+\nu-2)},
\end{equation}
où $\gamma_P(k,l)$ est d\'efinie en \eqref{Forme gamma deuxieme prop fourier}
\end{theorem}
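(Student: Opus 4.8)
La stratégie est celle de Mauduit et Rivat adaptée à la fonction doublement tronquée $f_P^{(\mu_1,\mu_2)}$. Je commencerais par l'étape standard de van der Corput : pour un paramètre entier $R$ à optimiser, on majore $|S_{II}(\vartheta)|^2$ par une moyenne sur $|r| < R$ de sommes
\[
\ds\sum_{\frac{N}{q}<n,n'\leq N}\ds\sum_{m}a_mb_n\overline{b_{n'}}f_P(mn)\overline{f_P(mn')}e(\vartheta m(n-n')),
\]
puis, posant $n' = n + qr$ (ou une variante), on se ramène à contrôler, uniformément en les décalages, des sommes en $m$ de $f_P(mn)\overline{f_P(m(n+qr))}$ modulées par une exponentielle. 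Le point crucial est de remplacer ici $f_P(mn)\overline{f_P(m(n+qr))}$ par une fonction \emph{doublement tronquée} $f_P^{(\mu_1,\mu_2)}$ évaluée en un entier convenable : c'est précisément ce que permet le Lemme \ref{Lemme Olivier Robert} (les produits $mn$ et $m(n+qr)$ ont, en dehors d'un ensemble exceptionnel de cardinal $\ll (\log q^{\mu+\nu})q^{\mu+\nu-\rho}$, le même nombre de chiffres, donc la même ``fenêtre'' de troncature), combiné aux Lemmes \ref{Lemme digital premiere troncature SII} et \ref{Lemme digital double troncature} qui bornent le coût digital de l'introduction de la première puis de la seconde troncature, en $q^{\mu+\nu-\rho+P(\mu+\nu+1)}$ et $q^{\mu+\nu+\mu_0-\mu_1+P(\mu_2+1)}$ respectivement. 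C'est là qu'intervient l'hypothèse \eqref{Grosse restriction sur P} : elle garantit que $P(\mu+\nu+1)$ est suffisamment petit devant $\gamma_P$ pour que tous ces termes d'erreur digitaux restent négligeables.

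Une fois la fonction doublement tronquée en place, je développerais $f_P^{(\mu_1,\mu_2)}$ en série de Fourier finie (c'est licite car la troncature en $q^{\mu_2}$ rend la fonction périodique de période $q^{\mu_2}$ en la première variable), ce qui introduit une somme sur une fréquence $h$ et fait apparaître le facteur $G_{\mu_0,\mu_2-\mu_0}(h+t,k)$ du Lemme \ref{Estimation Fourier SII}. Schématiquement, après Cauchy–Schwarz séparant la partie ``arithmétique'' (sommes en $m$ et $n$ d'exponentielles) de la partie ``digitale'' (la transformée de Fourier de $f_P^{(\mu_1,\mu_2)}$), on obtient une majoration du type
\[
|S_{II}(\vartheta)|^2 \ll (\text{facteurs logarithmiques})\cdot q^{2(\mu+\nu)}\Big(\ds\sum_{h}|G_{\mu_0,\mu_2-\mu_0}(h+t,k)|^2\Big)^{1/2}\cdot(\text{terme arithmétique})^{1/2},
\]
et le Lemme \ref{Estimation Fourier SII} fournit la décroissance $q^{\frac12(\mu_1-\mu_0-\gamma_P(\lambda,k))+3P(k)/4}$ du membre digital — c'est ce gain qui produit in fine l'exposant $-\frac{1}{64}\gamma_P(\frac{\mu+\nu}{640},\mu+\nu-2)$. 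Pour que le Lemme \ref{Estimation Fourier SII} s'applique il faut $\frac13(\mu_2-\mu_0)\leq\lambda\leq\frac45(\mu_2-\mu_0)$ et $P(k)\leq\frac13(\mu_1-\mu_0)$, contraintes à respecter dans le choix des paramètres.

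Il reste à estimer le terme arithmétique : une somme double sur $m\in I_1\subseteq[q^{\mu-2},q^\mu[$ et $n\in I(k,m)$ d'exponentielles dont l'argument est, selon les cas, linéaire en $n$ ou de la forme $\frac{h_1r}{q^{\mu_2}}m + \frac{c}{2m}$. C'est exactement la situation couverte par les Lemmes \ref{Equa terme principal somme mn SII} (cas ``terme principal'', fréquence nulle en $n$, où l'on somme simplement les cardinaux $\#I(k,m)$ pondérés, avec la borne $q^\nu\min(q^\mu,|\sin\pi\frac{h_1r}{q^{\mu_2}}|^{-1})$) et \ref{h2+h0 diff 0} (cas ``$hq^{\mu_1-\mu_2}s\notin\Z$'', où une sommation d'Abel suivie de l'estimation d'exposant $1/2$ de \cite[Theorem 2.2]{GrahamKolesnik} sur la dérivée seconde donne $(shq^{3(\mu_2-\mu_1)})^{1/2}q^{\frac78(\mu+\nu)}$). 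La sommation finale sur $h$, $r$, et sur $k\in\{\mu+\nu-4,\dots,\mu+\nu-1\}$ de ces bornes, combinée à $\ds\sum_h\min(\cdot,|\sin\cdot|^{-1})\ll q^{\mu_2-\mu_0}\log q$, absorbe les pertes en puissances de $q$ sans toucher au gain digital. Le choix explicite des paramètres $\mu_0,\mu_1,\mu_2,\lambda,\rho$ (notamment $\mu_2-\mu_0\approx\frac{\mu+\nu}{160}$, $\lambda\approx\frac{\mu+\nu}{640}$ pour obtenir le facteur $640$, et $\mu_1-\mu_0$ proportionnel à $\gamma_P$) équilibre tous les termes d'erreur.

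\textbf{Principal obstacle.} La vraie difficulté n'est pas conceptuelle — le squelette est celui de \cite{RudinShapiro} — mais réside dans le fait que, $P$ n'étant plus constante, la fonction $f_P$ ne vérifie pas la propriété de propagation des retenues sous la forme usuelle (comme expliqué dans la Partie \ref{Chap 2 Panorama}) : le nombre de chiffres pris en compte varie avec $T_q(n)$. Tout le travail délicat consiste donc à vérifier que, sur les \emph{plages où $T_q$ est constant} (d'où la restriction $k\in\{\mu+\nu-4,\dots,\mu+\nu-1\}$ et l'usage répété du Lemme \ref{Lemme Olivier Robert}), on peut quand même appliquer les estimations de type ``propagation'' des Lemmes \ref{Lemme digital premiere troncature SII} et \ref{Lemme digital double troncature}, et que les exposants $P(\mu+\nu+1)$, $P(\mu_2+1)$ qui apparaissent dans leurs termes d'erreur restent contrôlés par l'hypothèse \eqref{Grosse restriction sur P} — c'est le maillon où l'hypothèse de croissance $P(y)\leq c\log y$ avec $c<1/\log q$ est réellement utilisée, via la Remarque \ref{Remarque sur la fonction gammaP} garantissant $\gamma_P(\lambda,k)\leq\lambda/2$ et via la comparaison $q^{P(k)}-8(\sin\frac{\pi\|\alpha\|}{4})^2 < q^{P(k)}$ qui assure $\gamma_P>0$.
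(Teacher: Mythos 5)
Votre sch\'ema reprend correctement plusieurs ingr\'edients de la preuve (van der Corput, troncature sup\'erieure via le Lemme \ref{Lemme digital premiere troncature SII}, propri\'et\'e de Fourier via le Lemme \ref{Estimation Fourier SII}, estimations arithm\'etiques via les Lemmes \ref{Equa terme principal somme mn SII} et \ref{h2+h0 diff 0}), mais il manque l'\'etape structurelle centrale : la \emph{seconde} application d'une in\'egalit\'e de type van der Corput/Cauchy--Schwarz, cette fois en la variable $m$, avec un d\'ecalage $sq^{\mu_1}$ divisible par $q^{\mu_1}$ (\cite[Lemme $4$]{MR:gelfond}, cf. \eqref{Equation SII deuxieme erreur}). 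Apr\`es la seule premi\`ere \'etape, la somme porte sur $f_P^{(\mu_2)}(mn+mr)\overline{f_P^{(\mu_2)}(mn)}$ ; ces deux arguments ne sont pas congrus modulo $q^{\mu_1}$ (ils diff\`erent de $mr$), et aucun m\'ecanisme ne permet alors d'\'eliminer les chiffres d'indice inf\'erieur \`a $\mu_1$ : la fonction doublement tronqu\'ee $f_P^{(\mu_1,\mu_2)}$ ne peut pas \^etre introduite \`a ce stade. C'est pr\'ecis\'ement le produit quadruple issu du second van der Corput qui rend la chose possible, via l'identit\'e $f_P^{(\mu_1)}((m+sq^{\mu_1})n) = f_P^{(\mu_1)}(mn)$ (et de m\^eme pour $n+r$), en \'ecrivant $f_P^{(\mu_2)} = \overline{f_P^{(\mu_1)}}f_P^{(\mu_2)}f_P^{(\mu_1)}$ ; le Lemme \ref{Lemme Olivier Robert} ne sert qu'\`a garantir que les quatre produits ont le m\^eme nombre de chiffres (donc la m\^eme valeur de $P(T_q(\cdot))$), et le Lemme \ref{Lemme digital double troncature} ne fait que remplacer la fonction doublement tronqu\'ee par $\varphi_P(r_{\mu_0,\mu_2}(\cdot),\cdot)$ : ni l'un ni l'autre ne jouent le r\^ole que vous leur attribuez.

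Ce cha\^inon manquant se lit d'ailleurs dans l'\'enonc\'e lui-m\^eme : la borne \eqref{equation resulta SII} comporte un facteur \'elev\'e \`a la puissance $1/4$, trace du fait que la preuve majore $|S_{II}(\vartheta)|^4$ (deux lissages successifs, de param\`etres $R=q^{\rho}$ puis $S=q^{2\rho}$, avec $\mu_2=\mu+2\rho$, $\mu_1=\mu-3\rho$, $\mu_0=\mu_1-2\rho'$), alors que votre sch\'ema n'en majore que le carr\'e ; de m\^eme, le param\`etre $s$ du Lemme \ref{h2+h0 diff 0} et la dichotomie $q^{\mu_2-\mu_1} \mid hs$ ou non n'ont pas de sens sans cette seconde \'etape. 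Enfin, ce n'est qu'apr\`es l'insertion de la d\'etection de $r_{\mu_0,\mu_2}$ par les polyn\^omes de Vaaler et le d\'ecoupage selon $h_0+h_1=0$ ou non que les Lemmes \ref{Equa terme principal somme mn SII}, \ref{h2+h0 diff 0} et \ref{Estimation Fourier SII} s'encha\^inent comme dans votre plan, l'hypoth\`ese \eqref{Grosse restriction sur P} servant alors \`a valider les choix $\rho=\lfloor \mu/160\rfloor$ et $\rho'=\lfloor \frac{1}{16}\gamma_P(3\rho,\mu+\nu-2)\rfloor$ ; sans le second van der Corput, l'architecture que vous d\'ecrivez ne peut pas \^etre mise en place.
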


\begin{proof} Pareillement que \cite{RudinShapiro},
si $\rho$ est un entier tel que \begin{equation}\label{Equation restricition rho}
1 \leq 7\rho\leq \mu,
\end{equation}
en posant $R=q^{\rho}$ de sorte que $1\leq R \ll N$, on obtient par l'inégalité de van der Corput : 
\begin{equation}\label{Premier VdC}\left|S_{II}(\vartheta)\right|^2 \ll \ds\frac{M^2N^2}{R}+\ds\frac{MN}{R}\ds\sum_{1\leq r < R}\left(1-\frac{r}{R}\right)\Re (S_1(r)),
\end{equation} avec $$S_1(r) := \ds\sum_m\ds\sum_{n \in I_1(N,r)} b_{n+r}\overline{b_n}f_P(mn+mr)\overline{f_P(mn)}e(\vartheta mr),$$ et $I_1(N,r) = (N/q, N-r]$.  

Si de plus $\rho$ v\'erifie \begin{equation}\label{Equation determinant premiere condition troncature SII}
P(\mu+\nu) \leq \rho
\end{equation} on peut utiliser le Lemme \ref{Lemme digital premiere troncature SII} pour obtenir  \begin{equation}\label{SII premiere erreur} S_1(r) = S_1'(r) + O( (\log q)q^{\mu+\nu-\rho+P(\mu+\nu+1)}),
\end{equation}  avec $$S_1'(r) = \ds\sum_m\ds\sum_{n \in I_1(N,r)} b_{n+r}\overline{b_n}f_P^{(\mu+2\rho)}(mn+mr)\overline{f_P^{(\mu + 2\rho)}(mn)}e(\vartheta mr),$$ et nous posons \begin{equation}\label{mu2}
\mu_2 := \mu+2\rho.
\end{equation}
En  appliquant l'in\'egalit\'e de Cauchy-Schwarz et en \'etendant la sommation sur $n$ \`a l'intervalle $(N/q,N]$, on trouve : $$|S_1'(r)|^2 \ll N\ds\sum_n\left|\ds\sum_mf_P^{(\mu_2)}(mn+mr)\overline{f_P^{(\mu_2)}(mn)}e(\vartheta mr) \right|^2, $$ et une utilisation du \cite[Lemme $4$]{MR:gelfond} avec des param\`etres $\mu_1$ et $S$ tels que \begin{equation}\label{condition de parametrisation pour mu1}
1 \leq q^{\mu_1}S \ll M
\end{equation} donne :
\begin{equation}\label{Equation SII deuxieme erreur}
 \ds\sum_{1 \leq r < R}|S_1'(r)|^2 \ll \ds\frac{M^2N^2R}{S}+\ds\frac{MN}{S}\Re(S_2), 
\end{equation}
o\`u \begin{equation}\label{Equation S2(r,s)} S_2 := \ds\sum_{1 \leq r < R}\ds\sum_{1 \leq s < S}\left( 1-\frac{s}{S}\right)  S_2'(r,s) e\left(\vartheta q^{\mu_1}rs\right) 
\end{equation} 
et $$S_2'(r,s) = \ds\sum_{m \in I_2(M,s)}\ds\sum_n f_P^{(\mu_2)}((m+sq^{\mu_1})(n+r))\overline{f_P^{(\mu_2)}(m(n+r))f_P^{(\mu_2)}((m+sq^{\mu_1})n)}f_P^{(\mu_2)}(mn),$$ avec $I_2(M,s) = (M/q, M-sq^{\mu_1}]$.



Si nous posons $S = q^{2\rho}$ et $\mu_1 = \mu-3\rho$, et
\begin{align*}
 S_2''(r,s) :=  \ds\sum_{m \in I_2(M,s)}&\ds\sum_n  f_P^{(\mu_1,\mu_2)}((m+sq^{\mu_1})(n+r))
 \\& \cdot \overline{f_P^{(\mu_1,\mu_2)}(m(n+r))f_P^{(\mu_1,\mu_2)}((m+sq^{\mu_1})n)}f_P^{(\mu_1,\mu_2)}(mn),
\end{align*}
 nous avons $S_2'(r,s) = S_2''(r,s) + S_2'''(r,s)$ où $S_2'''(r,s)$ satisfait à $$|S_2'''(r,s)| \leq \#\{(m,n) : T_q(mn) \neq T_q((m+Sq^{\mu_1})(n+q^\rho))\}.$$ En effet, pour les $m$ et $n$ tels que $T_q(mn) = T_q((m+Sq^{\mu_1})(n+q^\rho))$, on peut remplacer directement la fonction tronquée par la fonction doublement tronquée car $m + sq^{\mu_1} \equiv m \bmod q^{\mu_1}$, et donc $f_P^{(\mu_1)}((m+sq^{\mu_1})n) = f_P^{(\mu_1)}(mn) $, et de même pour $m(n+r)$ et $(m+sq^{\mu_1})(n+r)$. Il suffit alors d'écrire $f_P^{(\mu_2)} = \overline{f_P^{(\mu_1)}}f_P^{(\mu_2)}f_P^{(\mu_1)}$.
 
\smallskip 
 
 
En imposant \begin{equation}\label{equa rho nu}
 \rho \leq \frac{\nu-1}{2}
\end{equation}  de sorte \`a pouvoir utiliser le Lemme \ref{Lemme Olivier Robert}. Nous obtenons alors : 

\begin{equation}\label{Equation erreur double troncature OR}
S_2'(r,s) = S_2''(r,s) + O((\mu+\nu)q^{\mu+\nu-\rho}).
\end{equation} 

\smallskip 


Soit à présent $\rho'$ tel que \begin{equation}\label{definition rho'}
\rho' \leq \rho,
\end{equation} nous posons
\begin{equation}\label{definition mu0}
\mu_0 = \mu_1-2\rho',
\end{equation} de sorte que $\mu_1-\mu_0 = 2\rho' \leq 3/4(5\rho + 2\rho') = 3/4(\mu_2-\mu_0)$.

Si nous avons \begin{equation}\label{Deuxieme condition sur P en SII}
P(\mu+2\rho) = P(\mu_2)\leq \mu_1-\mu_0 = 2\rho'
\end{equation}

par le Lemme \ref{Lemme digital double troncature} :

\begin{equation}\label{Equation erreur double troncature grand phi}
S_2''(r,s) = S_3(r,s) + O\left(\max (\tau(q),\log q)\mu_2^{\omega(q)}q^{\mu+\nu+\mu_0-\mu_1+P(\mu_2+1)}\right)
\end{equation}

avec \begin{align*}S_3(r,s) = \ds\sum_m \ds\sum_n & \varphi_P\Big(r_{\mu_0,\mu_2}((m+sq^{\mu_1})(n+r)),T_q((m+sq^{\mu_1})(n+r))\Big)
\\ & \cdot\overline{\varphi_P\Big((r_{\mu_0,\mu_2}m(n+r)),T_q(m(n+r))\Big)}\\ & \cdot \overline{\varphi_P\Big(r_{\mu_0,\mu_2}((m+sq^{\mu_1})n),T_q((m+sq^{\mu_1})n)\Big)}\varphi_P\Big(r_{\mu_0,\mu_2}(mn),T_q(mn)\Big),
\end{align*}

où on a pos\'e $$\varphi_P(x,y) := e\left(\alpha \ds\sum_{i=\mu_1-P(y)}^{\mu_2-P(y)}\epsilon_{i+P(y)}(q^{\mu_0}x)\cdots\epsilon_i(q^{\mu_0}x) \right). $$

Nous rappelons que nous pouvons supposer $T_q(mn) = T_q((m+sq^{\mu_1})(n+r))$; mais comme 
$T_q(m)T_q(n) \leq T_q(mn) \leq T_q(m)T_q(n)+1$ et que $m$ et $n$ sont pris de sorte que \begin{align*}
q^{\mu-2} \leq M/q \leq m < M < q^\mu \quad \text{et} \quad q^{\nu-2} \leq N/q \leq n < N < q^\nu,
\end{align*}

nous avons $\mu+\nu-4 \leq T_q(mn) \leq \mu+\nu-1,$ ce qui nous donne la réécriture suivante :

\begin{align*}S_3(r,s) = \ds\sum_{k = \mu+\nu-4}^{\mu+\nu-1}\ds\sum_m \ds\sum_n & \varphi_P\Big(r_{\mu_0,\mu_2}((m+sq^{\mu_1})(n+r)),k\Big)
\overline{\varphi_P\Big((r_{\mu_0,\mu_2}m(n+r)),k\Big)}\\ & \cdot \overline{\varphi_P\Big(r_{\mu_0,\mu_2}((m+sq^{\mu_1})n),k\Big)}\varphi_P\Big(r_{\mu_0,\mu_2}(mn),k\Big),
\end{align*}

En posant $u_0 := r_{\mu_0,\mu_2}(mn) $ et $u_1:=r_{\mu_0,\mu_2}(mn+mr)$, on a   \begin{equation*}
r_{\mu_0,\mu_2}(mn+q^{\mu_1}sn) = r_{\mu_2-\mu_0}(u_0+q^{\mu_1-\mu_0}sn) = u_0+q^{\mu_1-\mu_0}sn \bmod q^{\mu_2-\mu_0}
\end{equation*}et 
\begin{align*}
r_{\mu_0,\mu_2}(mn+mr+q^{\mu_1}sn+q^{\mu_1}sr) = u_1+q^{\mu_1-\mu_0}sn+q^{\mu_1-\mu_0}sr \bmod q^{\mu_2-\mu_0}.
\end{align*}
Et donc, comme $\varphi_P(x,y) = \varphi_P(x \bmod q^{\mu_2-\mu_0},y) $, nous avons : \begin{align*}
\varphi_P\Big(r_{\mu_0,\mu_2}((m+sq^{\mu_1})n),k\Big)  = \varphi_P\Big(u_0+q^{\mu_1-\mu_0}sn,k\Big)
\end{align*} et \begin{align*}
\varphi_P\Big(r_{\mu_0,\mu_2}&(m(n+r)+nsq^{\mu_1}+rsq^{\mu_1}),k\Big) = \varphi_P\Big(u_1+q^{\mu_1-\mu_0}sn+q^{\mu_1-\mu_0}sr,k\Big).
\end{align*}
Nous rappelons que (\cite[equation $(11)$]{RudinShapiro}) $$r_{\mu_0,\mu_2}(n)= u \Leftrightarrow \frac{n}{q^{\mu_2}} \in \left[\frac{u}{q^{\mu_2-\mu_0}},\frac{u+1}{q^{\mu_2-\mu_0}}\right) + \mathbb{Z}$$ et que $\chi_\alpha(x)$ d\'esigne la fonction caract\'eristique de l'intervalle $[0,\alpha)$ translat\'e dans $\Z$, si bien que
\begin{align*}
S_3(r,s) = &\ds\sum_{m \in I_2(M,s)}\ds\sum_n  \ds\sum_{0\leq u_0, u_1 < q^{\mu_2-\mu_0}}\chi_{q^{\mu_0-\mu_2}}\left(\frac{mn}{q^{\mu_2}}-\frac{u_0}{q^{\mu_2-\mu_0}}\right)\chi_{q^{\mu_0-\mu_2}}\left(\frac{mn+mr}{q^{\mu_2}}-\frac{u_1}{q^{\mu_2-\mu_0}}\right)\\& \cdot\ds\sum_{k = \mu+\nu-4}^{\mu+\nu-1}\varphi_P\Big(u_1+q^{\mu_1-\mu_0}sn+q^{\mu_1-\mu_0}sr,k\Big)\overline{\varphi_P\Big(u_1,k\Big)}\overline{\varphi_P\Big(u_0+q^{\mu_1-\mu_0}sn,k\Big)}\varphi_P\Big(u_0,k\Big).
\end{align*} 
En refaisant sans autre modification que celle induite par la notation les calculs de \cite[Sections $6.1$-$6.2$]{RudinShapiro}, on peut dire à l'aide de \cite[equation $(64)$]{RudinShapiro} et par \cite[Lemma $2$]{RudinShapiro} que
 
\begin{equation}\label{Terme d'erreur Vaaler SII}
S_3(r,s) = S_4(r,s) + O(\max(\log q^{\mu_0},\tau(q^{\mu_0}))q^{\mu+\nu-2\rho}),
\end{equation} 
avec
\begin{align*}
S_4(r,s) =&\ds\sum_k\ds\sum_{m \in I_2(M,s)}\ds\sum_n  \ds\sum_{ 0 \leq u_0, u_1 < q^{\mu_2-\mu_0}}\ds\sum_{|h_0|\leq H}a_{h_0}\left(q^{\mu_0-\mu_2},H\right)e\left(h_0\frac{mn}{q^{\mu_2}}-h_0\frac{u_0}{q^{\mu_2-\mu_0}}\right)
\\ & \cdot\ds\sum_{|h_1|\leq H}a_{h_1}(q^{\mu_0-\mu_2},H)e\left(h_1\frac{mn+mr}{q^{\mu_2}}-h_1\frac{u_1}{q^{\mu_2-\mu_0}}\right)\varphi_P\Big(u_0,k\Big)\\& \cdot\varphi_P\Big(u_1+q^{\mu_1-\mu_0}sn+q^{\mu_1-\mu_0}sr,k\Big)\overline{\varphi_P\Big(u_1,k\Big)}\overline{\varphi_P\Big(u_0+q^{\mu_1-\mu_0}sn,k\Big)},
\end{align*} avec $H = q^{\mu_2-\mu_0+2\rho}$, mais nous conserverons la notation $H$ tant que sa valeur explicite n'est pas utile, et où les coefficients $a_h$ satisfont à \begin{equation}\label{equa coeff a_h}
a_0(\alpha,H) = \alpha, \quad |a_h(\alpha,H)| \leq \min\left(\alpha, \frac{1}{\pi |h|} \right).
\end{equation}

\smallskip

En \'ecrivant $u_0+q^{\mu_1-\mu_0}sn \equiv u_2 \bmod q^{\mu_2-\mu_0} $ et $u_1+q^{\mu_1-\mu_0}sn+q^{\mu_1-\mu_0}sr \equiv u_3 \bmod q^{\mu_2-\mu_0}, $ nous avons :
\begin{align*}
S_4(r,s) &=  \ds\sum_{m \in I_2(M,s)}\ds\sum_n \, \ds\sum_{0 \leq u_0,u_1 <  q^{\mu_2-\mu_0}} \ds\sum_{|h_0|\leq H}a_{h_0}\left(q^{\mu_0-\mu_2},H\right)e\left(h_0\frac{mn}{q^{\mu_2}}-h_0\frac{u_0}{q^{\mu_2-\mu_0}}\right)
\\ & \cdot\ds\sum_{|h_1|\leq H}a_{h_1}(q^{\mu_0-\mu_2},H)e\left(h_1\frac{mn+mr}{q^{\mu_2}}-h_1\frac{u_1}{q^{\mu_2-\mu_0}}\right)\ds\sum_{0 \leq u_2,u_3 < q^{\mu_2-\mu_0}}\ds\frac{1}{q^{2(\mu_2-\mu_0)}}
\\& \cdot\ds\sum_{0 \leq h_2,h_3 < q^{\mu_2-\mu_0}}e\left(h_2\ds\frac{u_0+q^{\mu_1-\mu_0}sn-u_2}{q^{\mu_2-\mu_0}}\right)e\left(h_3\ds\frac{u_1+q^{\mu_1-\mu_0}sn+q^{\mu_1-\mu_0}sr-u_3}{q^{\mu_2-\mu_0}}\right)
\\ & \qquad \cdot\ds\sum_{k = \mu+\nu-4}^{\mu+\nu-1}\varphi_P\Big(u_3,k\Big)\overline{\varphi_P\Big(u_1,k\Big)}
 \overline{\varphi_P\Big(u_2,k\Big)}\varphi_P\Big(u_0,k\Big).
\end{align*}

 ce qui fait qu'en posant \begin{equation*}\label{Ecriture transfo fourier double tronqu\'ee}
 \widetilde{\varphi_P}(h,y):= \frac{1}{q^{\mu_2-\mu_0}}\ds\sum_{0 \leq u < q^{\mu_2-\mu_0}}\varphi_P(u,y)e\left(\frac{-uh}{q^{\mu_2-\mu_0}}\right),
\end{equation*} nous avons l'\'ecriture suivante, toujours si $m \in I_2(M,s)$ :
\begin{align*}
S_4(r,s) = & q^{2(\mu_2-\mu_0)}\ds\sum_{k=\mu+\nu-4}^{\mu+\nu-1} \ds\sum_{|h_0|\leq H}a_{h_0}\left(q^{\mu_0-\mu_2},H\right) \ds\sum_{|h_1|\leq H}a_{h_1}(q^{\mu_0-\mu_2},H)
\\ & \cdot\ds\sum_{0 \leq h_2,h_3 < q^{\mu_2-\mu_0}}e\left(\frac{h_3sr}{q^{\mu_1-\mu_0}}\right) \widetilde{\varphi_P}\big(h_3,k\big)\overline{\widetilde{\varphi_P}(h_3-h_1,k)}\overline{\widetilde{\varphi_P}(-h_2,k)}\widetilde{\varphi_P}(h_0-h_2,k)
\\ & \cdot\ds\sum_{\substack{m,n \\ T_q(mn)=k}}e\left(h_0\frac{mn}{q^{\mu_2}}+h_1\frac{mn+mr}{q^{\mu_2}}+h_2\ds\frac{q^{\mu_1-\mu_0}sn}{q^{\mu_2-\mu_0}}+h_3\ds\frac{q^{\mu_1-\mu_0}sn}{q^{\mu_2-\mu_0}}\right).
\end{align*}

Nous pouvons remarquer que les variables de $\widetilde{\varphi_P}$ sont ``ind\'ependantes''. Nous s\'eparons ici la double somme sur $h_0,h_1$ en deux sommes : $S_4'(r,s)$ qui correspond au cas où $h_0+h_1 = 0$, qui consiste en la contribution principale (du fait que $e(0) = 1$) et $S_4''(r,s)$ qui rassemble les autres termes. 



\subsection{Estimation de $S_4''(r,s)$}


Soit $k \in \{\mu+\nu-4,\mu+\nu-1\}$ fix\'e, nous traitons d'abord la sommation sur $m$ et $n$ comme suit :
 \begin{align}
\label{equa S4'' debut} &\left|\ds\sum_{m \in I_2(M,s)}\ds\sum_{\substack{N/q \leq n < N \\ T_q(mn)=k}}\right.  \left.e\left(h_0\frac{mn}{q^{\mu_2}}+h_1\frac{mn+mr}{q^{\mu_2}}+h_2\ds\frac{q^{\mu_1-\mu_0}sn}{q^{\mu_2-\mu_0}}+h_3\ds\frac{q^{\mu_1-\mu_0}sn}{q^{\mu_2-\mu_0}}\right)\right|  
\\ \notag &= \left| \ds\sum_{N/q \leq n < N}e\left((h_2+h_3)q^{\mu_1-\mu_2}sn \right)\ds\sum_{m \in I_2(M,s) \cap \big[ \frac{q^{k}}{n},\frac{q^{k+1}}{n}\big)}e\left(m\left[\frac{n(h_0+h_1)}{q^{\mu_2}}+\frac{h_1r}{q^{\mu_2}}\right]\right) \right|
\end{align} et comme $I_2(M,s) \subseteq [M/q,M) \subseteq [q^{\mu-2},q^\mu)$, la derni\`ere ligne est
\begin{align*}
 & \leq \ds\sum_{N/q \leq n < N}\min \left( \left|[ M/q,M) \cap \Big[ \frac{q^{k}}{n},\frac{q^{k+1}}{n}\Big) \right|, \left| \sin \pi \frac{(h_0+h_1)n+h_1r}{q^{\mu_2}} \right|^{-1} \right)
\\ \notag & \leq \ds\sum_{q^{\nu-1} \leq n < q^{\nu}}\min \left( q^{\mu}, \left| \sin \pi \frac{(h_0+h_1)n+h_1r}{q^{\mu_2}} \right|^{-1} \right) \ll \lceil q^{\nu-\mu_2} \rceil ((h_0+h_1,q^{\mu_2})q^{\mu}+q^{\mu_2}\log q^{\mu_2}),
\end{align*}
par \cite[Lemme $6$]{MR:gelfond}. Mais $|h_0+h_1| \leq 2H $, par ailleurs, en ayant choisi $H = q^{\mu_2-\mu_0+2\rho} $, nous avons $Hq^{\mu} \geq q^{\mu+\mu_2-\mu_0} \geq q^{\mu_2},$ et donc

\begin{equation*}
\eqref{equa S4'' debut} \ll \lceil q^{\nu-\mu_2} \rceil Hq^{\mu}\log q^{\mu_2}.
\end{equation*}

Par orthogonalit\'e des caract\`eres, nous avons pour tout $k$ et $y$ :

\begin{equation}\label{Parseval dans notre cas}
\ds\sum_{0 \leq h < q^{\mu_2-\mu_0}}\left| \widetilde{\varphi_P}(h+y,k) \right|^2 =  1
\end{equation}
et nous obtenons donc par l'in\'egalit\'e de Cauchy-Schwarz : 
\begin{align*}
 &\ds\sum_{0 \leq h_3 < q^{\mu_2-\mu_0}}\Big| \widetilde{\varphi_P}\big(h_3,k\big) \widetilde{\varphi_P}(h_3-h_1,k)\Big| \leq 1  \text{ et }  \ds\sum_{0 \leq h_2 < q^{\mu_2-\mu_0}}\Big|\widetilde{\varphi_P}(-h_2,k)\widetilde{\varphi_P}(h_0-h_2,k)\Big| \leq 1.
\end{align*}



De plus, d'apr\`es \eqref{equa coeff a_h} et le choix $H = q^{\mu_2-\mu_0+2\rho}$ : $$\ds\sum_{|h| \leq H}|a_h(q^{\mu_0-\mu_2},H)| \leq \ds\sum_{|h| \leq q^{\mu_2-\mu_0}}\frac{1}{q^{\mu_2-\mu_0}}+\ds\sum_{q^{\mu_2-\mu_0} < |h| \leq H}\frac{1}{\pi |h|}\ll \log (H/q^{\mu_2-\mu_0})=\log q^\rho.$$

Ainsi nous avons $$|S_4''(r,s)| \ll  (\log q)^2\rho^2q^{2(\mu_2-\mu_0)}\lceil q^{\nu-\mu_2} \rceil H q^{\mu}\log q^{\mu_2}, $$ et avec le choix $H=q^{\mu_2-\mu_0+2\rho}$, et comme $\lceil x \rceil \leq x+1$, \begin{equation}\label{S4 seconde}
|S_4''(r,s)| \ll (\log q)^3(\mu+\nu)^3q^{\mu+\nu+3(\mu_2-\mu_0)+2\rho}(q^{-\mu_2}+q^{-\nu}).
\end{equation}

\subsection{Estimation de $S_4'(r,s)$} 
Posons $h = h_2+h_3$, de sorte \`a unifier les termes en $n$. Ceci nous donne  
\begin{align*}
S_4'(r,s) \ll &  q^{2(\mu_2-\mu_0)}\ds\sum_{k=\mu+\nu-4}^{\mu+\nu-1} \ds\sum_{|h_1|\leq H}\left|a_{h_1}\left(q^{\mu_0-\mu_2},H\right)\right|^2
\\ & \ds\sum_{0 \leq h < 2q^{\mu_2-\mu_0}}\ds\sum_{0 \leq h_3 < q^{\mu_2-\mu_0}}\left| \widetilde{\varphi_P}\big(h_3,k\big)\widetilde{\varphi_P}(h_3-h_1,k)\widetilde{\varphi_P}(h_3-h,k)\widetilde{\varphi_P}(h_3-h_1-h,k)\right|
\\ & \left|\ds\sum_{m \in I_2(M,s)}\ds\sum_{\substack{q^{\nu-1} \leq n < q^\nu \\ T_q(mn)=k}}e\left(m\frac{h_1r}{q^{\mu_2}}+n\frac{sh}{q^{\mu_2-\mu_1}}\right)\right|.
\end{align*}

 Il s'agit \`a pr\'esent de rendre les lignes ind\'ependantes les unes des autres. Si nous posons \begin{align*}
 S_6(h,h_1,k) = \ds\sum_{0 \leq h_3 < q^{\mu_2-\mu_0}}\left| \widetilde{\varphi_P}\big(h_3,k\big)\widetilde{\varphi_P}(h_3-h_1,k)\widetilde{\varphi_P}(h_3-h,k)\widetilde{\varphi_P}(h_3-h_1-h,k)\right|
 \end{align*} par l'in\'egalit\'e de Cauchy-Schwarz et par $q^{\mu_2-\mu_0}$ p\'eriodicit\'e, nous avons : 
\begin{align*}
|S_6(h,h_1,k)|  \leq \ds\sum_{0 \leq h_3 < q^{\mu_2-\mu_0}}\left| \widetilde{\varphi_P}\big(h_3,k\big)\widetilde{\varphi_P}(h_3-h_1,k)\right|^2=:S_6'(h_1,k).
\end{align*} Ce qui donne : \begin{align*}
S_4'(r,s) \ll  q^{2(\mu_2-\mu_0)}&\ds\sum_{k = \mu+\nu-4}^{\mu+\nu-1} \ds\sum_{|h_1|\leq H}\left|a_{h_1}\left(q^{\mu_0-\mu_2},H\right)\right|^2 S_6'(h_1,k)
\\ & \quad  \cdot \ds\sum_{1 \leq h < 2q^{\mu_2-\mu_0}}\left|\ds\sum_{m \in I_2(M,s)}\ds\sum_{\substack{N/q \leq n < N \\ T_q(mn)=k}}e\left(m\frac{h_1r}{q^{\mu_2}}+n\frac{sh}{q^{\mu_2-\mu_1}}\right)\right|.
\end{align*} 
Nous constatons qu'il y a essentiellement deux termes :
\begin{equation}\label{Transition S4 S5}
\ds\sum_{1 \leq s < S}|S_4'(r,s)|  \ll  \ds\sum_{\substack{ 1 \leq s < S \\ 0 \leq h < 2q^{\mu_2-\mu_0} \\ q^{\mu_2-\mu_1} | hs}}S_5(r,h) + \ds\sum_{\substack{ 1 \leq s < S \\ 0 \leq h < 2q^{\mu_2-\mu_0} \\ q^{\mu_2-\mu_1} \nmid  hs}}S_5'(r,h,s),
\end{equation} avec
\begin{align*}
 S_5(r,h) :=  q^{2(\mu_2-\mu_0)}\ds\sum_{k = \mu+\nu-4}^{\mu+\nu-1} \ds\sum_{|h_1|\leq H}\left|a_{h_1}\left(q^{\mu_0-\mu_2},H\right)\right|^2 S_6'(h_1,k)  \left|\ds\sum_{\substack{m,n \\ T_q(mn)=k}}e\left(m\frac{h_1r}{q^{\mu_2}}\right)\right| ,
\end{align*}
et \begin{align*}
& S_5'(r,h,s) := 
\\ & q^{2(\mu_2-\mu_0)}\ds\sum_{k = \mu+\nu-4}^{\mu+\nu-1} \ds\sum_{|h_1|\leq H}\left|a_{h_1}\left(q^{\mu_0-\mu_2},H\right)\right|^2 S_6'(h_1,k) \left|\ds\sum_{\substack{m,n \\ T_q(mn)=k}}e\left(m\frac{h_1r}{q^{\mu_2}}+n\frac{hs}{q^{\mu_2-\mu_1}}\right)\right|
\end{align*} o\`u la sommation sur $m$ se fait sur l'intervalle $I_2(M,s)$.

Les termes où l'argument concernant $n$ est altéré sont les plus faciles \`a traiter. D'apr\`es le Lemme \ref{h2+h0 diff 0}, nous avons \begin{align*}
 &S_5'(r,h,s)  \\ &   \ll q^{2(\mu_2-\mu_0)}\ds\sum_{k = \mu+\nu-4}^{\mu+\nu-1} \ds\sum_{|h_1|\leq H}\left|a_{h_1}\left(q^{\mu_0-\mu_2},H\right)\right|^2 S_6'(h_1,k)q^{\mu_2-\mu_1}\left(sq^{\mu_1-\mu_0}q^{\mu_2-\mu_1} \right)^{1/2}q^{\frac{7}{8}(\mu+\nu)}.
\end{align*} 

Cependant \begin{equation}\label{Parseval double}
\ds\sum_{0 \leq h_1 < q^{\mu_2-\mu_0}}S_6'(h_1,k) =
\ds\sum_{0 \leq h_1 < q^{\mu_2-\mu_0}}\ds\sum_{0 \leq h_3 < q^{\mu_2-\mu_0}}\left| \widetilde{\varphi_P}\big(h_3,k\big)\widetilde{\varphi_P}(h_3-h_1,k)\right|^2 = 1.
\end{equation}  Nous s\'eparons donc $S_5'(r,h,s)$ selon $|h_1| \leq q^{\mu_2-\mu_0}$, qu'on nomme $S_{7,1}(r,h,s)$ et selon $q^{\mu_2-\mu_0} < |h_1| \leq H $ qu'on nomme $S_{7,1}'(r,h,s)$. Cette séparation est justifiée par la contribution des coefficients $a_h$. Par \eqref{Parseval double}, en utilisant $|a_{h_1}(q^{\mu_0-\mu_2},H)|\leq q^{\mu_0-\mu_2} $, nous avons l'estimation \begin{equation}\label{S7,1}
S_{7,1}(r,h,s) \ll q^{\mu_2-\mu_1}\left(sq^{\mu_1-\mu_0}q^{\mu_2-\mu_1} \right)^{1/2}q^{\frac{7}{8}(\mu+\nu)} \ll q^{\frac{7}{8}(\mu+\nu)+\frac{3}{2}(\mu_2-\mu_1)+\frac{1}{2}(\mu_2-\mu_0)}s^{1/2}.
\end{equation}

 À présent, si $q^{\mu_2-\mu_0} < |h_1| \leq H $, alors $|a_{h_1}(q^{\mu_0-\mu_2},H)|\leq \frac{1}{\pi|h_1|}$, et donc \begin{align*}
S_{7,1}'(r,h,s) \ll q^{2(\mu_2-\mu_0)}\ds\sum_{k = \mu+\nu-4}^{\mu+\nu-1} \ds\sum_{q^{\mu_2-\mu_0} \leq |h_1|\leq H}\frac{S_6'(h_1,k)}{|h_1|^2} q^{\frac{7}{8}(\mu+\nu)}q^{\mu_2-\mu_1}\left(sq^{\mu_1-\mu_0}q^{\mu_2-\mu_1} \right)^{1/2}.
\end{align*}
Mais $S_6'(h_1,k)$ est $q^{\mu_2-\mu_0}$ p\'eriodique en $h_1$. Nous pouvons s\'eparer la sommation en $jq^{\mu_2-\mu_0} \leq |h| \leq (j+1)q^{\mu_2-\mu_0} $ avec $1 \leq j \leq H/q^{\mu_2-\mu_0}$ et borner $|h_1|^{-2}$ par $j^{-2}q^{2(\mu_0-\mu_2)} $. Ceci nous m\`ene \`a dire que 
\begin{align*}
S_{7,1}'&(r,h,s) \\ & \ll q^{\frac{7}{8}(\mu+\nu)+\frac{3}{2}(\mu_2-\mu_1)+2(\mu_2-\mu_0)}\left(sq^{\mu_1-\mu_0}\right)^{1/2}\ds\sum_{k = \mu+\nu-4}^{\mu+\nu-1}\ds\sum_{j \geq 1}\frac{1}{j^2q^{2(\mu_2-\mu_0)}} \ds\sum_{0 \leq |h_1|< q^{\mu_2-\mu_0}}S_6'(h_1,k)
\\ & \ll q^{\frac{7}{8}(\mu+\nu)+\frac{3}{2}(\mu_2-\mu_1)+\frac{1}{2}(\mu_2-\mu_0)}s^{1/2},
\end{align*} et par \eqref{S7,1}, nous obtenons  \begin{equation}\label{S5'}
\ds\sum_{\substack{ 1 \leq s < S \\ 0 \leq h < q^{\mu_2-\mu_0} \\ q^{\mu_2-\mu_1} \nmid  hs}} S_5'(r,h,s)  \ll q^{\frac{7}{8}(\mu+\nu)+\frac{3}{2}(\mu_2-\mu_1)+\frac{3}{2}(\mu_2-\mu_0)}S^{3/2}.
\end{equation}

\`A pr\'esent contr\^olons $S_5(r,h)$.
Nous utilisons le Lemme \ref{Equa terme principal somme mn SII} pour pouvoir dire \begin{equation}
 \left|\ds\sum_{m \in I_2(M,s)}\ds\sum_{\substack{N/q \leq n < N \\ T_q(mn)=k}}e\left(h_1\frac{mr}{q^{\mu_2}}\right)\right| \ll q^\nu \min\left(q^\mu, \left|\sin \pi \frac{h_1r}{q^{\mu_2}} \right|^{-1} \right),
\end{equation} ce qui am\`ene \`a \begin{align*}
 S_5(r,h,s) & \ll  q^{2(\mu_2-\mu_0)}\ds\sum_{k = \mu+\nu-4}^{\mu+\nu-1} \ds\sum_{|h_1|\leq H}\left|a_{h_1}\left(q^{\mu_0-\mu_2},H\right)\right|^2 S_6'(h_1,k)  q^\nu \min\left(q^\mu, \left|\sin \pi \frac{h_1r}{q^{\mu_2}} \right|^{-1} \right).
\end{align*} Notons que $|h_1r| \leq HR = q^{\mu_2-\mu_0+3\rho}=q^{\mu_2-\mu+6\rho+2\rho'} \leq q^{\mu_2-\mu+8\rho}$. Si l'on suppose \begin{equation}\label{Restriction rho}
8\rho < \mu,
\end{equation} nous avons $|h_1r| < q^{\mu_2}$ et donc $\left|\sin \pi \frac{h_1r}{q^{\mu_2}} \right|^{-1} \leq \ds\frac{q^{\mu_2}}{r|h_1|}$, ce qui donne \begin{align*}
 S_5(r,h) & \ll  q^{2(\mu_2-\mu_0)}\ds\sum_{k = \mu+\nu-4}^{\mu+\nu-1} \ds\sum_{|h_1|\leq H}\left|a_{h_1}\left(q^{\mu_0-\mu_2},H\right)\right|^2 S_6'(h_1,k)  q^\nu \min\left(q^\mu, \frac{q^{\mu_2}}{r|h_1|} \right).
\end{align*}

Nous s\'eparons cette derni\`ere somme en $S_{7,2}(r,h), \, S_{7,2}'(r,h)$ et $ S_{7,2}''(r,h) $ selon $|h_1| \leq q^{2\rho}$, $q^{2\rho} < |h_1| \leq q^{\mu_2-\mu_0} $ et $q^{\mu_2-\mu_0} < |h_1| \leq H $. En effet, si $|h_1| < q^\rho$, $\min\left(q^\mu, \frac{q^{\mu_2}}{r|h_1|} \right) = q^\mu$, alors que $\min\left(q^\mu, \frac{q^{\mu_2}}{r|h_1|} \right) = \frac{q^{\mu_2}}{r|h_1|}$ sinon. Notons ici un enjeu dans le fait que la sommation sur $n$ soit maximale.

En prenant en compte cette particularité, en utilisant \eqref{Parseval double} pour $S_{7,2}'$ et en effectuant le même découpage que $S_{7,1}'$ et la majoration $|a_{h_1}(q^{\mu_0-\mu_2},H)|\leq \frac{1}{\pi|h_1|}$ pour $S_{7,2}''$, on peut démontrer que  \begin{align*}
S_{7,2}'(r,h)+S_{7,2}''(r,h) \ll q^{\mu+\nu}/r.
\end{align*} Par exemple \begin{align*}
S_{7,2}'(r,h) &= q^{\nu+2(\mu_2-\mu_0)}\ds\sum_{k = \mu+\nu-4}^{\mu+\nu-1}\ds\sum_{q^{2\rho} < |h_1| \leq q^{\mu_2-\mu_0}}|a_{h_1}|^2S_6'(h_1,k)\frac{q^{\mu_2}}{r|h_1|}
\\& \ll q^{\nu+\mu_2-2\rho}\ds\sum_{k = \mu+\nu-4}^{\mu+\nu-1}\ds\sum_{0 \leq h_1 < q^{\mu_2-\mu_0}}S_6'(h_1,k),
\end{align*} et le même genre de calculs, adaptés avec le découpage de $S_{7,1}'$ s'applique pour obtenir la majoration de $S_{7,2}''(r,h)$.

Nous pouvons alors \'ecrire 
\begin{align*}
\ds\frac{1}{RS}\ds\sum_{1 \leq r < R}\ds\sum_{\substack{ 1 \leq s < S \\ 0 \leq h < q^{\mu_2-\mu_0} \\ q^{\mu_2-\mu_1} |  hs}}&\left(S_{7,2}'(r,h) + S_{7,2}''(r,h) \right)   \ll \ds\frac{1}{RS}\ds\sum_{1 \leq r < R}\ds\sum_{\substack{ 1 \leq s < S \\ 0 \leq h < q^{\mu_2-\mu_0} \\ q^{\mu_2-\mu_1} | hs}}\frac{q^{\nu + \mu}}{r}
\\ & \ll q^{\mu+\nu}\ds\frac{\log R}{R} \ds\frac{\#\{1 \leq s < S , 0 \leq h < q^{\mu_2-\mu_0} : q^{\mu_2-\mu_1} |  hs \}}{S}
\\ & \ll q^{\mu+\nu}\ds\frac{\log R}{R} \ds\frac{q^{\mu_2-\mu_0-\mu_2+\mu_1}S}{S},
\end{align*} 
ce qui revient \`a dire \begin{equation}\label{Valeur finale S72 et S72'}
\ds\frac{1}{RS}\ds\sum_{1 \leq r < R}\ds\sum_{\substack{ 1 \leq s < S \\ 0 \leq h < q^{\mu_2-\mu_0} \\ q^{\mu_2-\mu_1} |  hs}}\left(S_{7,2}'(r,h) + S_{7,2}''(r,h) \right)\ll q^{\mu+\nu+\mu_1-\mu_0}\ds\frac{\log R}{R}.
\end{equation} Traitons maintenant le cas $S_{7,2}(h,r)$. Nous rappelons que \begin{align*}
S_{7,2}(h,r) & :=q^{2(\mu_2-\mu_0)}\ds\sum_{k = \mu+\nu-4}^{\mu+\nu-1} \ds\sum_{|h_1|\leq q^{2\rho}}\left|a_{h_1}\left(q^{\mu_0-\mu_2},H\right)\right|^2 S_6'(h_1,k)  q^\nu \min\left(q^\mu, \frac{q^{\mu_2}}{r|h_1|} \right)
\\ & \ll q^{\mu+\nu}\ds\sum_{k = \mu+\nu-4}^{\mu+\nu-1} \ds\sum_{|h_1|\leq q^{2\rho}} S_6'(h_1,k)
\\ & \ll q^{\mu+\nu}\ds\sum_{k = \mu+\nu-4}^{\mu+\nu-1} \ds\sum_{|h_1|\leq q^{2\rho}}\ds\sum_{0 \leq h_3 < q^{\mu_2-\mu_0}}\left| \widetilde{\varphi_P}\big(h_3,k\big)\widetilde{\varphi_P}(h_3-h_1,k)\right|^2.
\end{align*}Supposons \begin{equation}\label{Estimation P(mu+nu+1) < 1/3}
P(\mu+\nu+1) \leq \frac{1}{3}(\mu_1-\mu_0)
\end{equation} Nous utilisons ici le Lemme \ref{Estimation Fourier SII} avec $\lambda = \mu_2-\mu_0-2\rho $ pour pouvoir dire \begin{align*}
\ds\sum_{|h_1|\leq q^{2\rho}}\ds\sum_{0 \leq h_3 < q^{\mu_2-\mu_0}}\left| \widetilde{\varphi_P}\big(h_3,k\big)\right.&\left.\widetilde{\varphi_P}(h_3-h_1,k)\right|^2
\\ & \ll q^{\frac{1}{2}(\mu_1-\mu_0-\gamma_P(\mu_2-\mu_0-2\rho,\mu+\nu-2)+\frac{3P(k)}{4}}(\log q^{\mu_2-\mu_1})^2, 
\end{align*} donc nous obtenons par croissance de $P$ : \begin{align*}
 \ds\frac{1}{RS}\ds\sum_{1 \leq r < R}\ds\sum_{\substack{ 1 \leq s < S \\ 0 \leq h < q^{\mu_2-\mu_0} \\ q^{\mu_2-\mu_1} \nmid  hs}}S_{7,2}(h,r) & \ll q^{(\mu+\nu)+\frac{1}{2}(\mu_1-\mu_0-\gamma_P(\mu_2-\mu_0-2\rho,\mu+\nu-2)+\frac{3}{4}P(\mu+\nu)}(\log q^{\mu_2-\mu_1})^2
 \\ & \quad \cdot \ds\frac{\#\{1 \leq s < S , 0 \leq h < q^{\mu_2-\mu_0} : q^{\mu_2-\mu_1} |  hs \}}{S},
\end{align*} 
ce qui nous permet de dire \begin{equation}\label{Equa 27}
\ds\frac{1}{RS}\ds\sum_{1 \leq r < R}\ds\sum_{\substack{ 1 \leq s < S \\ 0 \leq h < q^{\mu_2-\mu_0} \\ q^{\mu_2-\mu_1} |  hs}}S_7(h,r) \ll (\log q^{\mu_2-\mu_1})^2q^{(\mu+\nu)+\frac{3}{2}(\mu_1-\mu_0)-\frac{1}{2}\gamma_P(\mu_2-\mu_0-2\rho,\mu+\nu-2)+\frac{3}{4}P(\mu+\nu)}.
\end{equation} En rassemblant les \'equations \eqref{Premier VdC}, \eqref{SII premiere erreur}, \eqref{Equation SII deuxieme erreur}, \eqref{Equation S2(r,s)}, \eqref{Equation erreur double troncature OR}, \eqref{Equation erreur double troncature grand phi}, \eqref{Terme d'erreur Vaaler SII}, \eqref{S4 seconde}, \eqref{Transition S4 S5}, \eqref{S5'}, \eqref{Valeur finale S72 et S72'} et \eqref{Equa 27}, ainsi qu'en posant $S = q^{2\rho}$ nous avons :

\begin{align*}
\left|S_{II}(\vartheta)\right|^4 & \ll  (\log q^{\mu_2-\mu_1})^2q^{4(\mu+\nu)+\frac{3}{2}(\mu_1-\mu_0)-\frac{1}{2}\gamma_P(\mu_2-\mu_0-2\rho,\mu+\nu-2)+\frac{3}{4}P(\mu+\nu)}
\\ & \quad  + q^{4(\mu+\nu)+\mu_1-\mu_0-\rho}\log q^\rho
\\ & \quad + q^{(4-\frac{1}{8})(\mu+\nu)+\frac{3}{2}(\mu_2-\mu_1)+\frac{3}{2}(\mu_2-\mu_0)+\rho}
\\ & \quad + (\log q)^3(\mu+\nu)^3q^{4(\mu+\nu)+3(\mu_2-\mu_0)+2\rho}(q^{-\mu_2}+q^{-\nu})
\\ & \quad + \max(\log q^{\mu_0},\tau(q^{\mu_0}))q^{4(\mu+\nu)-2\rho}
\\ & \quad + \max (\tau(q),\log q)\mu_2^{\omega(q)}q^{4(\mu+\nu)+\mu_0-\mu_1+P(\mu_2+1)}
\\ & \quad + q^{4(\mu+\nu)-\rho}(\mu+\nu) + q^{4(\mu+\nu)-2\rho}+q^{4(\mu+\nu)-2\rho}.
\end{align*}

Nous faisons quelques r\'eductions \'el\'ementaires pour pouvoir dire 

\begin{align*}
\left|S_{II}(\vartheta)\right|^4 & \ll  (\log q^{\mu_2-\mu_1})^2q^{4(\mu+\nu)+\frac{3}{2}(\mu_1-\mu_0)-\frac{1}{2}\gamma_P(\mu_2-\mu_0-2\rho,\mu+\nu-2)+\frac{3}{4}P(\mu+\nu)}
\\ & \quad + q^{(4-\frac{1}{8})(\mu+\nu)+\frac{3}{2}(\mu_2-\mu_1)+\frac{3}{2}(\mu_2-\mu_0)+\rho}
\\ & \quad + (\log q)^3(\mu+\nu)^3q^{4(\mu+\nu)+3(\mu_2-\mu_0)+2\rho}(q^{-\mu_2}+q^{-\nu})
\\ & \quad + \max(\log q^\rho,\log q^{\mu_0},\tau(q^{\mu_0}),(\mu+\nu))q^{4(\mu+\nu)+\mu_1-\mu_0-\rho}
\\ & \quad + \max (\tau(q),\log q)\mu_2^{\omega(q)}q^{4(\mu+\nu)+\mu_0-\mu_1+P(\mu_2+1)} .
\end{align*}

Nous rappelons \`a pr\'esent que nous avons pos\'e $\mu_2=\mu+2\rho$, $ \mu_1 = \mu-3\rho$ et  $\mu_0 = \mu_1-2\rho' = \mu-3\rho-2\rho'$. Quitte \`a choisir $2\rho < \nu$, en remarquant que par \eqref{Estimation P(mu+nu+1) < 1/3}, nous avons $P(\mu+\nu+1) \leq 2\rho'/3$ et que par ailleurs, $\rho' \leq \rho \leq \mu$, et $\gamma_P(l,k)$ est croissante en $l$ et décroissante en $k$ (pour voir ce dernier point, on remarque que l'application 
$x \mapsto \log (x-K)/\log x
$ a sa dérivée positive si $x \geq K$ du fait de la croissance de $x \log x$), une fois posé
\begin{equation}\label{equa rho pour Anne}
P(\mu+\nu) \leq \gamma_P(3\rho, \mu+\nu)/3,
\end{equation} nous pouvons écrire :

\begin{align*}
\left|S_{II}(\vartheta)\right|^4 & \ll q^{4(\mu+\nu)}\left[ (\log q^{5\rho})^2q^{3\rho'-\frac{1}{4}\gamma_P(3\rho, \, \mu+\nu-2)} \right.
\\& \quad + q^{-\frac{1}{8}(\mu+\nu)+19\rho}
\\ & \quad + (\log q)^3(\mu+\nu)^3q^{23\rho}(q^{-\mu-2\rho}+q^{-\nu})
\\ & \quad + \max\big(\tau(q^{\mu-3\rho-2\rho'}),\mu+\nu \big)q^{2\rho'-\rho}
\\ & \quad \left. + \max (\tau(q),\log q)(\mu+2\rho)^{\omega(q)}q^{-\frac{4}{3}\rho'}\right].
\end{align*}
Nous faisons alors le choix de prendre $\rho = \lfloor \frac{\mu}{160} \rfloor \leq \frac{\mu}{160} \leq \frac{3}{4*160}(\mu+\nu)$ et $\rho' = \lfloor \frac{1}{16} \gamma_P(3\rho, \, \mu+\nu-2)) \rfloor \leq \frac{3}{32}\rho $. On voit ais\'ement que $\rho$ v\'erifie \eqref{equa rho nu} et d'apr\`es la Remarque \ref{Remarque sur la fonction gammaP} et par \eqref{Grosse restriction sur P}, nous sommes certains que \eqref{Estimation P(mu+nu+1) < 1/3} et \eqref{equa rho pour Anne} sont v\'erifi\'ees. Ces choix, avec la condition \eqref{Condis sur mu et nu SII} et les estimations classiques des parties enti\`eres, nous permettent d'écrire : 

\begin{align*}
\left|S_{II}(\vartheta)\right|^4 & \ll q^{4(\mu+\nu)}\left[ \max \big(\tau(q),\log q)(\mu+\nu)^{\omega(q)},(\log q^{5\rho})^2 \big)q^{-\frac{1}{16}\gamma_P(\frac{\mu+\nu}{640},\, \mu+\nu-2)}  \right.
\\ & \quad + q^{-\frac{184}{5120}(\mu+\nu)}+ (\log q)^3(\mu+\nu)^3q^{-\frac{139}{4*160} (\mu+\nu)}
\\ & \quad + (\log q)^3(\mu+\nu)^3q^{-\frac{137}{4*160} (\nu+\mu)}
\\ & \quad \left. + q^{\frac{26}{32}}\max\big( \tau(q^{\mu-3\rho-2\rho'}),\mu+\nu \big)q^{-\frac{26}{32*4*160} (\mu+\nu) }\right]
\\& \ll q^{4(\mu+\nu)}q^{\frac{26}{32}}\left(q^{-\frac{26}{20480} (\mu+\nu) }+q^{-\frac{1}{16}\gamma_P(\frac{\mu+\nu}{640},\, \mu+\nu-2))})\right)
\\& \qquad \max\bigg(\tau(q)(\mu+\nu)^{\omega(q)},\log q(\mu+2\rho)^{\omega(q)}
\\ & \qquad \qquad ,(\log q^{5\rho})^2  \tau(q^{\mu-3\rho-2\rho'}),(\log q)^3(\mu+\nu)^3 \bigg) .
\end{align*} 
Cependant, par la Remarque \ref{Remarque sur la fonction gammaP} : \begin{equation*}
\frac{1}{16} \gamma_P\left(\frac{\mu+\nu}{640},\, \mu+\nu-2\right)  \leq  \frac{\mu+\nu}{160*32}  \leq \frac{26}{20480} (\mu+\nu)
\end{equation*} 
et donc finalement:
\begin{equation}\label{Equation Finale SII}
S_{II}(\vartheta) \ll q^{\frac{26}{128}}\left(\max\big(\tau(q),(\log q)^3\big)(\mu+\nu)^{\omega(q)+3}\right)^{1/4}q^{\mu+\nu-\frac{1}{64}\gamma_P( \frac{\mu+\nu}{640},\, \mu+\nu-2)}.
\end{equation}
\end{proof}

\section{Fin de l'estimation}\label{Sec Fin estim}


Nous utilisons \cite[Lemme $1$]{MR:gelfond} et son pendant que l'on peut retrouver par \cite[equation ($13.40$)]{Iwaniec} avec nos estimations \eqref{Equation Finale SI}
 et \eqref{Equation Finale SII}.
En rappelant que
$\gamma_P(l,k)$,
 est croissante en $l$ et d\'ecroissante en $k$ et en posant $$c_1(q) = q^{26/128}\max\left((\log q)^3,\tau(q)^{1/4}\right)(\log q)^{-3-\frac{\omega(q)}{4}}$$ et en rappelant que $x = q^{\mu+\nu}$, nous obtenons :
\begin{align*}
\left |\ds\sum_{x/q < n \leq x}\Lambda(n)f_P(n)e(\vartheta n) \right| \ll c_1(q)(\log x)^{3+\frac{\omega(q)}{4}}xq^{-\frac{1}{64}\gamma_P(\lfloor \frac{\log x}{\log q}\rfloor \frac{1}{160} , \, \lfloor \frac{\log x}{\log q}\rfloor)}
\end{align*} et 
la même équation en remplaçant $\Lambda$ par $\mu$.

Il reste \`a remplacer $x$ par $xq^{-k}$ et sommer sur $k$. Soit $K \in \N$ tel que $q^K \leq x^{1/4} < q^{K+1}$. Comme $\gamma_P(l,k)$ est croissante en $l$ et d\'ecroissante en $k$, nous avons 
\begin{align*}
\ds\sum_{k \leq K}xq^{-k}q^{-\frac{1}{64}\gamma_P(\lfloor \frac{\log xq^{-k}}{\log q}\rfloor \frac{1}{160} , \, \lfloor \frac{\log xq^{-k}}{\log q}\rfloor)}  & \leq \ds\sum_{k \leq K}xq^{-k}q^{-\frac{1}{64}\gamma_P(\lfloor \frac{\log xq^{-k}}{\log q}\rfloor \frac{1}{160} , \, \lfloor \frac{\log x}{\log q}\rfloor)}
\\ & \leq q^{-\frac{1}{64}\gamma_P(\lfloor \frac{\log x^{3/4}}{\log q}\rfloor \frac{1}{160} , \, \lfloor \frac{\log x}{\log q}\rfloor)}\ds\sum_{k \leq K}xq^{-k}
\\ & \ll xq^{-\frac{1}{64}\gamma_P(\lfloor \frac{\log x}{\log q}\rfloor \frac{1}{120} , \, \lfloor \frac{\log x}{\log q}\rfloor)},
\end{align*}
tandis que \begin{align*}
\ds\sum_{k > K}\frac{x}{q^k}q^{-\frac{1}{64}\gamma_P(\lfloor \frac{\log xq^{-k}}{\log q}\rfloor \frac{1}{160} , \, \lfloor \frac{\log xq^{-k}}{\log q}\rfloor)} & \leq \ds\sum_{k > K}\frac{x}{q^k}
\\ & \ll \frac{x}{q^{K+1}} \ll x^{3/4} \ll  xq^{-\frac{1}{64}\gamma_P(\lfloor \frac{\log x}{\log q}\rfloor \frac{1}{120} , \, \lfloor \frac{\log x}{\log q}\rfloor)}
\end{align*} 

et finalement \begin{equation}\label{Equation finale lambda}
\left |\ds\sum_{ n \leq x}\Lambda(n)f_P(n)e(\vartheta n) \right| \ll c_1(q)(\log x)^{3+\frac{\omega(q)}{4}}xq^{-\frac{1}{64}\gamma_P(\lfloor \frac{\log x}{\log q}\rfloor \frac{1}{120} , \, \lfloor \frac{\log x}{\log q}\rfloor)}
\end{equation} et \begin{equation}\label{Equation finale mu}
\left |\ds\sum_{ n \leq x}\mu(n)f_P(n)e(\vartheta n) \right| \ll c_1(q)(\log x)^{3+\frac{\omega(q)}{4}}xq^{-\frac{1}{64}\gamma_P(\lfloor \frac{\log x}{\log q}\rfloor \frac{1}{120} , \, \lfloor \frac{\log x}{\log q}\rfloor)}.
\end{equation}

\section{Conditions sur P}\label{Sec Choix de P}

Nous devons \`a pr\'esent choisir $P$ de sorte que \eqref{Condition sur P SI} et \eqref{Grosse restriction sur P} soient satisfaites.
Comme la fonction $P$ est croissante, que $\lfloor x \rfloor \geq x/2$, et que $\rho \leq \frac{3}{4*160}(\mu+\nu)$, les deux \'equations sont impliqu\'ees par \begin{equation}
P(2(\mu+\nu))\leq \frac{1}{48}\gamma_P\left(\frac{\mu+\nu}{640},\mu+\nu\right)
\end{equation}

Constatons que 
pour tout $0 \leq x < 1$, nous avons $\log(1-x) \leq -x$, ce qui implique que \begin{align*}
\gamma_P(l,k) \geq l\frac{8\left(\sin \frac{\pi |\!|\alpha |\!|}{4}\right)^2}{q^{P(k)}P(k)\log q},
\end{align*} et donc il suffit que la fonction $P$ v\'erifie 
\begin{equation}\label{Estimation restriction pour croissance de P}
P(2x)q^{P(x)}P(x)\log q \leq \frac{x}{640*48}8\left(\sin \frac{\pi |\!|\alpha |\!|}{4}\right)^2.
\end{equation}
 et nous constatons que pour tout $0 < c < 1/\log q$, toute fonction $P(x)$ v\'erifiant $P(x) \leq c \log x$ v\'erifie \eqref{Estimation restriction pour croissance de P}.

\smallskip

Pour que les \'equations \eqref{Equation finale lambda} et \eqref{Equation finale mu} ne soient pas triviales, il faut que \begin{align}\label{Condi P finale infini}
& c_1(q)(\log x)^{3+\frac{\omega(q)}{4}}xq^{-\frac{1}{64}\gamma_P(\lfloor \frac{\log x}{\log q}\rfloor \frac{1}{120} , \, \lfloor \frac{\log x}{\log q}\rfloor)} = o(x)
\end{align} 
Comme $0 < c < 1/\log q$ et $P(x) < c \log x$, nous avons  \begin{align*}
 \frac{12+\omega(q)}{4 \log q }&\log \log x  - \frac{1}{64*120} \Big\lfloor \frac{\log x}{\log q} \Big\rfloor \cdot \frac{8\left(\sin \frac{\pi |\!|\alpha |\!|}{4}\right)^2}{q^{P(\lfloor \frac{\log x}{\log q}\rfloor)}P(\lfloor \frac{\log x}{\log q}\rfloor)\log q} 
\\ & \notag \leq 
 \frac{12+\omega(q)}{4 \log q }\log \log x - \frac{\left(8\sin \frac{\pi |\!|\alpha |\!|}{4}\right)^2}{64*120c\log q} \cdot \frac{1}{\log(\lfloor \frac{\log x}{\log q}\rfloor)} \cdot\Big\lfloor \frac{\log x}{\log q} \Big\rfloor^{1-c\log q}\underset{x \rightarrow \infty}{\rightarrow} - \infty, 
\end{align*} de sorte que \eqref{Condi P finale infini} est satisfaite.
\section{Non automaticit\'e}\label{Annexe 4}

Dans cette partie, nous montrons que les suites $((-1)^{a_P(n)})_{n \geq 0}$ \'etudi\'ees ne sont pas automatiques dans le cas $q=2$. Ceci nous permet d'affirmer que les r\'esultats de cet article ne sont pas recouverts par \cite{MuellnerAutomate}. La preuve que nous pr\'esentons, plus élégante que notre preuve initiale est due \`a Julien Cassaigne, que nous remercions pour nous avoir autorisé à la reproduire ici.

Soit $P : \N \rightarrow \N$ une fonction croissante majorée par $\log n / \log 2$.
Nous allons montrer que l'ensemble $E$ des entiers possédant un nombre pair de blocs de
taille $P(T_2(n))$ "$1$" cons\'ecutifs en base $2$ n'est reconnaissable par aucun
automate fini.

Supposons que $E$ est reconnu par un automate \`a $k$ \'etats. Parmi les $k+1$ mots
$x_l = 0^{k-l} 1^l$, pour $l$ de $0$ \`a $k$, il y en a donc deux qui conduisent au
m\^eme \'etat, disons $x_i$ et $x_j, i<j$. Soit $m$ un entier suffisamment grand pour
que $P(m)>k$ et $m>P(m)+k$. Soit $y = 1^{P(m)-j} 0^{m-P(m)+j-k}$. Alors les mots
$x_i\cdot y$ et $x_j \cdot y$ sont deux mots de longueur $m$ qui conduisent au m\^eme \'etat,
et pourtant le premier code un élément de $E$ (il ne contient aucun bloc de
taille $P(m)$) et pas le second (il contient exactement un bloc de taille $P(m)$). Le choix de $m$ est valable puisque $\min (P(m), m-P(m))$ est non major\'e dans notre cas.

\section{Optimalité de la méthode}\label{Optim}


Dans cette partie, nous allons montrer que si $P(x) > 2c \log x$ alors les résultats obtenus dans la Partie\ref{Sec Fin estim} se trouvent être mis en défaut.

Les r\'esultats obtenus dans \cite{Hanna1} (tout comme ceux obtenus dans \cite{RudinShapiro}) sont tr\`es d\'ependants de ce qui est nomm\'e la propri\'et\'e de Fourier, qui implique notamment qu'il existe une application $\gamma$ tendant vers l'infini \`a l'infini telle que, si $k$ est un entier sup\'erieur ou \'egal \`a $1$, \begin{align}\label{Prop Fourier simplifiee}
\ds\frac{1}{2^N}\left|\ds\sum_{n < 2^N}e\left(\alpha \ds\sum_{i \geq 0}\epsilon_{i}(n)\cdots\epsilon_{i+k}(n)\right) \right| \leq 2^{-\gamma(N)}.
\end{align} On se convainc aisément que ce problème est quasi identique à celui rencontré dans cet article pour le contrôle de la transformée de Fourier. L'adapter à notre cas n'entrainerait qu'un alourdissement des notations. Par ailleurs c'est la propriété de propagation qui a posé problème dans cet article, et qui a poussé à adapter la méthode.

  En prenant $\alpha = 1/2$, 
nous cherchons donc \`a montrer que \begin{align*}
\ds\frac{1}{2^N}\left|E_k(2^N)-I_k(2^N) \right| \leq 2^{-\gamma(N)},
\end{align*} o\`u $E_k(m)$ d\'esigne le nombre de $u < m$ tels que $u$ a un nombre pair de blocs de taille $k+1$ de $`1$' dans son \'ecriture binaire, et $I_k(m)$, un nombre impair. El\'ementairement : 
\begin{align}\label{Transfo}
\left|E_k(2^N)-I_k(2^N) \right| = 2^N - 2\min (E_k(2^N), I_k(2^N)) = 2\max (E_k(2^N), I_k(2^N)) - 2^N.
\end{align} 

Nous allons montrer 
le r\'esultat n\'egatif suivant :

\begin{proposition}\label{Prop les pairs sont trop gros}
Soient $N$ et $k$ deux entiers, et $E_k(2^N)$ et $I_k(2^N)$ d\'efinis comme pr\'ec\'edemment .

Nous avons pour tout $A>1$, et pour tout entier $k \geq A \log_2(N)$: 
\begin{equation*}
\ds\frac{E_k(2^N)}{2^N} \geq  1-2N^{1-A} + o(N^{1-A})  \qquad \text{et donc} \qquad \ds\frac{I_k(2^N)}{2^N} \leq 2N^{1-A} + o(N^{1-A}).
\end{equation*}
\end{proposition}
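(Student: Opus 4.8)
The idea behind Proposition \ref{Prop les pairs sont trop gros} is that when $k$ is as large as $A\log_2 N$ with $A>1$, a binary word of length $N$ very rarely contains a run of $k+1$ consecutive $1$'s; consequently almost every $u<2^N$ contains \emph{no} such block at all, hence carries the even number $0$ of blocks of size $k+1$, and is therefore counted by $E_k(2^N)$. I would make this quantitative by bounding the complementary quantity $I_k(2^N)$. Since an odd number of blocks is in particular a positive number of blocks,
\[
I_k(2^N)\ \le\ \#\Big\{0\le u<2^N:\ \exists\, i\ \text{with}\ \epsilon_i(u)=\epsilon_{i+1}(u)=\cdots=\epsilon_{i+k}(u)=1\Big\}.
\]
One may assume $k<N$: otherwise no integer below $2^N$, having at most $N$ binary digits, can contain $k+1$ consecutive ones, so $I_k(2^N)=0$ and the statement is immediate.

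Next I would estimate the right-hand side by a first-moment (union) bound over the starting position $i$ of a block. For a fixed $i$ with $0\le i\le N-1-k$, prescribing the $k+1$ digits $\epsilon_i,\dots,\epsilon_{i+k}$ to equal $1$ and leaving the remaining $N-k-1$ digits free shows that exactly $2^{\,N-k-1}$ integers of $[0,2^N)$ have a block at position $i$. Summing over the $N-k$ admissible values of $i$,
\[
I_k(2^N)\ \le\ (N-k)\,2^{\,N-k-1}\ \le\ N\,2^{\,N-k-1}.
\]
Inserting the hypothesis $k\ge A\log_2 N$, which gives $2^{-k}\le 2^{-A\log_2 N}=N^{-A}$, yields
\[
\frac{I_k(2^N)}{2^N}\ \le\ \tfrac12\,N^{\,1-A}\ \le\ 2\,N^{\,1-A}+o\!\left(N^{\,1-A}\right),
\]
and then $E_k(2^N)+I_k(2^N)=2^N$ (cf. \eqref{Transfo}) gives $E_k(2^N)/2^N=1-I_k(2^N)/2^N\ge 1-\tfrac12 N^{1-A}\ge 1-2N^{1-A}+o(N^{1-A})$, which is the claim.

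As a by-product, $|E_k(2^N)-I_k(2^N)|=2E_k(2^N)-2^N\ge(1-N^{1-A})2^N$ is \emph{not} $o(2^N)$ once $A>1$, which is exactly the obstruction to \eqref{Prop Fourier simplifiee} that this section is after. The computation is elementary and there is no genuine obstacle; the only points needing attention are the harmless case split $k<N$ versus $k\ge N$, the bookkeeping of which starting positions $i$ fit inside the $N$ available digits, and checking that the convention ``block of size $k+1$'' matches the window $\epsilon_i(u)\cdots\epsilon_{i+k}(u)$ appearing in $a_P$. The union bound above is wasteful — counting instead, by a transfer-matrix recurrence for words avoiding $k+1$ consecutive ones, or by isolating a single block of length exactly $k+1$, one gets a matching lower bound $I_k(2^N)\gg N^{1-A}2^N$, so the order $N^{1-A}$ in the proposition is the true one — but for the stated inequality the crude estimate is enough.
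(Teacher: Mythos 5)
Your proof is correct, and it takes a genuinely different, more elementary route than the paper. The paper argues probabilistically: it encodes a uniform binary word of length $N$ by i.i.d.\ geometric run lengths $(Z_i)$ (the construction borrowed from \cite{Regine}), bounds the maximal run length $\mathcal{M}$ via $\mathbb{P}(\mathcal{M}<y)\geq (1-2^{-\lfloor y\rfloor})^N$ (Proposition \ref{Res proba}), and then uses $E_k(2^N)\geq 2^N\,\mathbb{P}(\mathcal{M}<k)$, since a word whose longest run is $<k$ contains zero blocks of $k+1$ consecutive ones and $0$ is even. You instead bound $I_k$ directly: an odd count is in particular a positive count, so $I_k(2^N)$ is at most the number of $u<2^N$ containing some window of $k+1$ ones, and the union bound over the $N-k$ admissible starting positions gives $I_k(2^N)\leq (N-k)\,2^{N-k-1}$, hence $I_k(2^N)/2^N\leq \tfrac12 N^{1-A}$ once $k\geq A\log_2 N$, with $E_k+I_k=2^N$ finishing the argument. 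Both proofs are first-moment estimates of the same event (existence of $k+1$ consecutive ones), but yours is deterministic and non-asymptotic: it yields the constant $\tfrac12$ with no $o(\cdot)$ term and is valid for every $A>1$, whereas the paper's Proposition \ref{Res proba} is stated only for $1<A<2$ and relies on an asymptotic expansion of $1-(1-2^{-\lfloor y\rfloor})^N$; the paper's route buys the explicit probabilistic framework of runs as geometric variables, which is conceptually suggestive but not needed for the stated inequality. Your auxiliary remarks — the harmless case $k\geq N$, the fact that any sensible reading of ``bloc de taille $k+1$'' forces at least one window of $k+1$ ones, and the observation that a matching lower bound shows $N^{1-A}$ is the true order — are also sound.
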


Par \eqref{Transfo}, ceci veut dire que sous les conditions de la proposition, on a pour $N$ assez grand \begin{align*}
\ds\frac{1}{2^N}\left|\ds\sum_{u < 2^N}e\left(\frac{1}{2} \ds\sum_{i \geq 0}\epsilon_{i+k}(u)\cdots\epsilon_i(u)\right) \right| &= \frac{2\max (E_k(2^N), I_k(2^N)) - 2^N}{2^N}
\\ & \geq 2(1-2N^{1-A})+o(N^{1-A})-1 
\\ & = 1-4N^{1-A}+o(N^{1-A})
\end{align*} et en particulier \eqref{Prop Fourier simplifiee} n'a aucune chance de se r\'ealiser. Le Th\'eor\`eme suivant d\'ecoule facilement de la Proposition \ref{Prop les pairs sont trop gros}.
\begin{theorem}\label{Thm principal chap 3}
Soient $f : \N \rightarrow \N$ une application, $A>1$ un r\'eel, $N$ un entier positif, et $$P_N(X_1,\ldots ,X_N) = \ds\sum_{i = 1}^{N-k}X_i\ldots X_{i+k-1}$$ un polyn\^ome, avec  $k$ un entier sup\'erieur ou \'egal \`a $ A\log N /\log 2$. Alors, si pour tout entier $n$ inf\'erieur \`a $2^N$, nous notons $P_N(n) := P_N(\epsilon_0(n),\ldots , \epsilon_{N-1}(n))$ o\`u les $\epsilon_i$ d\'esignent les chiffres binaires de $n$, nous avons $$\ds\sum_{n < 2^N}f(n)(-1)^{P_N(n)} = \ds\sum_{n < 2^N}f(n) + \varepsilon(N),$$ avec $|\varepsilon(N)| \leq 2^{N+1}N^{1-A}\sup_{n < 2^N}|f(n)|(1+o(1))$.
\end{theorem}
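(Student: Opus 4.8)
The plan is to reduce the identity for a general weight $f$ to the counting Proposition \ref{Prop les pairs sont trop gros}, exactly as the discussion preceding the statement already suggests. First I would observe that, since $(-1)^{P_N(n)} \in \{+1,-1\}$, the difference
\begin{align*}
\ds\sum_{n < 2^N}f(n)(-1)^{P_N(n)} - \ds\sum_{n < 2^N}f(n) = -2\ds\sum_{\substack{n < 2^N \\ P_N(n) \text{ impair}}}f(n),
\end{align*}
so that $\varepsilon(N) = -2\sum_{n < 2^N,\, P_N(n)\text{ odd}}f(n)$. It then suffices to bound the number of $n<2^N$ with $P_N(n)$ odd. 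The key point is that $P_N(n) = \sum_{i=1}^{N-k}\epsilon_i(n)\cdots\epsilon_{i+k-1}(n)$ counts precisely the number of blocks of $k$ consecutive $1$'s in the binary expansion of $n$ (counted with the sliding-window convention of the paper), so $P_N(n)$ is odd if and only if $n$ has an odd number of such blocks. In the notation of Proposition \ref{Prop les pairs sont trop gros} with the shift $k \mapsto k-1$, this is exactly the set counted by $I_{k-1}(2^N)$.

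Next I would invoke Proposition \ref{Prop les pairs sont trop gros}: since $k \geq A\log N/\log 2$, i.e. $k \geq A\log_2 N$, we are in the regime of that proposition (the statement there is phrased for "blocks of size $k+1$", matching the index shift), and it gives
\begin{align*}
\ds\frac{I_{k-1}(2^N)}{2^N} \leq 2N^{1-A} + o(N^{1-A}).
\end{align*}
Therefore
\begin{align*}
|\varepsilon(N)| \leq 2\ds\sum_{\substack{n < 2^N \\ P_N(n)\text{ odd}}}|f(n)| \leq 2\,I_{k-1}(2^N)\sup_{n<2^N}|f(n)| \leq 2^{N+1}N^{1-A}\sup_{n<2^N}|f(n)|\,(1+o(1)),
\end{align*}
which is the claimed bound.

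The only genuine content is Proposition \ref{Prop les pairs sont trop gros} itself, which I am allowed to assume; the deduction above is purely formal. The main (very minor) obstacle is bookkeeping the index conventions: one must check that the polynomial $P_N$ with monomials $X_i\cdots X_{i+k-1}$ (a window of length $k$) corresponds to $E_k$/$I_k$ with the paper's convention "blocks of size $k+1$", hence to $E_{k-1}$/$I_{k-1}$, and that the hypothesis $k\geq A\log_2 N$ is exactly the hypothesis "$k\geq A\log_2 N$" needed to apply the proposition after this shift (the shift by one is absorbed into the $o(N^{1-A})$ term and does not affect the leading constant). Once the indices are aligned, the estimate falls out immediately, and the absolute value $\sup_{n<2^N}|f(n)|$ appears simply by the triangle inequality since $f$ need not be nonnegative.
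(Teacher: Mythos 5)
Votre démonstration est correcte et suit essentiellement le même chemin que celle de l'article : on sépare la somme selon la parité de $P_N(n)$, on écrit $\varepsilon(N)=-2\sum_{2\nmid P_N(n)}f(n)$, on majore par $\sup_{n<2^N}|f(n)|$ fois le nombre d'entiers exceptionnels, puis on invoque la Proposition \ref{Prop les pairs sont trop gros}. La seule divergence est un point de comptabilité d'indices : l'article identifie directement ce cardinal à $I_k(2^N)$ (de sorte que l'hypothèse $k\geq A\log N/\log 2$ colle mot pour mot à la Proposition), tandis que votre passage à $I_{k-1}(2^N)$ exigerait, pour une citation en boîte noire, $k-1\geq A\log N/\log 2$ ; ce décalage d'une unité est toutefois anodin (un $P_N(n)$ impair force une plage d'au moins $k$ chiffres `$1$', et la Proposition \ref{Res proba} appliquée au niveau $k$ redonne la majoration voulue), et votre argument reste au même niveau de précision que la preuve de l'article.
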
 
\begin{proof}
Soient $E_k$ et $I_k$ d\'efinis comme pr\'ec\'edemment. Sous les conditions de la Proposition \ref{Prop les pairs sont trop gros}, nous avons pour $N \gg_A 1$ : \begin{align}\label{Equa I}
I_k(2^N) = \min(E_k(2^N),I_k(2^N)) = 2^N - \max(E_k(2^N),I_k(2^N)) 
\end{align} et \begin{align*}
\ds\sum_{n < 2^N}f(n)(-1)^{P_N(n)} &= \ds\sum_{\substack{n < 2^N \\ 2 | P_N(n)}}f(n) - \ds\sum_{\substack{n < 2^N \\ 2 \nmid P_N(n)}}f(n)
\\ & = \ds\sum_{n < 2^N}f(n) - 2\ds\sum_{\substack{n < 2^N \\ 2 \nmid P_N(n)}}f(n).
\end{align*} \`A pr\'esent remarquons que 
\begin{align*}
\left|\ds\sum_{\substack{n < 2^N \\ 2 \nmid P_N(n)}}f(n)\right| & \leq \sup_{n < 2^N}|f(n)| \#\{n<2^N : P_N(n) \not\equiv 0 \bmod 2\}
\\ & = \sup_{n < 2^N}|f(n)| I_k(2^N)
\end{align*} et la Proposition \ref{Prop les pairs sont trop gros} permet de conclure.
\end{proof}

En appliquant la proposition avec $f(n) = \mu(n)$ et en utilisant le th\'eor\`eme des nombres premiers sous la forme $\sum_{n \leq x}\mu(n) = o(x)$, nous obtenons que pour tout $A > 1$, \begin{align*}
\ds\sum_{n < 2N}\mu(n)(-1)^{P_N(n)} = o(2^N)
\end{align*} avec 
le polyn\^ome regardé dans l'énoncé où $k \geq A\log N /\log 2$ et $P_N(n) = P_N(\epsilon_0(n), \ldots, \epsilon_{N-1}(n))$. De m\^eme, avec cette d\'efinition de $P_N(n)$, le th\'eor\`eme des nombres premiers permet de conclure que pour tout $A > 2$ et $k \geq A \log N / \log 2$, \begin{align*}
\ds\sum_{n < 2N}\Lambda(n)(-1)^{P_N(n)} \sim 2^N.
\end{align*} En effet, $\sup_{n < 2^N}\Lambda (n) \leq N\log 2 < N$ et $|\varepsilon(N)| \leq N^{1-A}2^{N+1}\sup_{n < 2^N}\Lambda (n)(1+o(1)) = o(2^N)$, car $A > 2$.  Ceci montre en particulier que toute méthode utilisant l'identité de Vaughan est vouée à l'échec dans ce cas précis.

La preuve de la Proposition \ref{Prop les pairs sont trop gros} repose sur un r\'esultat probabiliste. Ce n'est pas si \'etonnant si on tient compte du fait que, pour un entier $n$ pris al\'eatoirement entre $0$ et $2^N -1$, en \'ecrivant $$n = \ds\sum_{i = 0}^{N-1}\epsilon_i(n)2^i,$$ o\`u $\epsilon_{N-1}(n)$ peut \^etre \'egal \`a $0$, les $\epsilon_i$ suivent des lois de Bernoulli et sont ind\'ependantes et uniform\'emement distribu\'ees. 

Introduisons des notations : soit $\mathcal{A} := \{0,1\}$ l'alphabet \`a deux \'el\'ements, $\mathcal{A}^N$ les mots sur $\mathcal{A}$ de taille $N$. Nous identifions totalement $\{0,\ldots, 2^N-1\}$ \`a $\mathcal{A}^N$. Pour $\omega$ un mot de taille $N$, nous notons $\mathcal{N}(\omega)$ son nombre de blocs constitu\'es du m\^eme chiffre, pour chaque $i$-i\`eme bloc, $X_i(\omega)$ sa taille. Nous notons enfin $\mathcal{M}(\omega)$ la taille du plus grand bloc constitu\'e du m\^eme chiffre. 

Ainsi $\omega := 111000011$ est constitu\'e de trois blocs : `$111$' de taille $3$, `$0000$' de taille $4$ et `$11$' de taille $2$. Donc dans ce cas $\mathcal{N}(\omega) = 3$, $X_1(\omega) = 3, X_2(\omega) = 4, X_3(\omega) = 2$ et $\mathcal{M}(\omega) = 4$.  

Nous commen\c cons cette partie en g\'en\'erant un mot al\'eatoire de taille $N$. Cette construction est standard et peut \^etre retrouv\'ee dans \cite{Regine}.

\begin{construction}
Soient $(Z_i)_{i \geq 0}$ des lois g\'eom\'etriques de param\`etre $1/2$ d\'efinies sur un espace de probabilit\'es $(\Omega, \mathcal{F}, \mathbb{P})$. Supposons-les ind\'ependantes et identiquement distribu\'ees. Soit $\epsilon$ une loi de Bernoulli de param\`etre $1/2$ d\'efinie sur $\Omega$ et ind\'ependante des $Z_i$. Alors nous construisons une suite infinie de $0$ et de $1$ de la mani\`ere suivante : 
\begin{itemize}
\item si $\epsilon = 1$ nous \'ecrivons $Z_1$ $ `0$', puis $Z_2$ $`1$' puis $Z_3 `0$' etc.
\item si $\epsilon = 0$ nous \'ecrivons $Z_1$ $`1$', puis $Z_2$ $`0$' puis $Z_3 `1$' etc.
\end{itemize} Gardant les premiers symboles, nous obtenons un mot al\'eatoire de taille $N$. Avec ces notations, nous avons \begin{align*}
\mathcal{N}(\omega) = \inf \left\{k \in \N, \quad \ds\sum_{i = 1}^k Z_i(\omega) \geq N \right\}
\end{align*} et \begin{equation}
\forall i \in \{1,\ldots, \mathcal{N}(\omega)-1\} : X_i(\omega) = Z_i \quad \text{ et } \quad X_{\mathcal{N}(\omega)}(\omega) = N-\ds\sum_{i = 1}^{\mathcal{N}(\omega)-1}Z_i \leq Z_{\mathcal{N}(\omega)}.
\end{equation}
\end{construction}

Nous avons alors le r\'esultat suivant : 
\begin{proposition}\label{Res proba}
Pour tout $1<A<2$, tout $N \geq 2$ et tout $\omega \in \mathcal{A}^l$ : \begin{align*}
\mathbb{P}(\mathcal{M}(\omega) \geq A \log N / \log 2 ) \leq 2N^{1-A}.
\end{align*}
\end{proposition}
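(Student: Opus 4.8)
The plan is to use the construction preceding the statement, which realizes a uniformly random word $\omega\in\mathcal A^N$ (equivalently a uniform integer in $\{0,\dots,2^N-1\}$) via i.i.d. geometric random variables $(Z_i)_{i\ge 1}$ of parameter $1/2$ together with one independent Bernoulli sign. Recall that with this construction $\mathcal M(\omega)=\max_{1\le i\le \mathcal N(\omega)}X_i(\omega)$, and since $X_i(\omega)\le Z_i$ for every $i$ (with equality for $i<\mathcal N(\omega)$ and the inequality for $i=\mathcal N(\omega)$, as noted in the Construction), we have the pointwise bound
\[
\mathcal M(\omega)\le \max_{1\le i\le \mathcal N(\omega)}Z_i\le \max_{1\le i\le N}Z_i,
\]
because $\mathcal N(\omega)\le N$ always (each block has length at least $1$). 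So it suffices to bound $\mathbb P\bigl(\max_{1\le i\le N}Z_i\ge t\bigr)$ with $t=A\log N/\log 2$, which I would do by a union bound: $\mathbb P(\max_i Z_i\ge t)\le N\,\mathbb P(Z_1\ge t)$.

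The next step is the elementary tail estimate for a single geometric variable. Since $Z_1$ has parameter $1/2$, $\mathbb P(Z_1\ge t)=2^{-\lceil t\rceil+1}\le 2^{1-t}$ (one has $\mathbb P(Z_1= j)=2^{-j}$ for $j\ge 1$, hence $\mathbb P(Z_1\ge j)=2^{-(j-1)}$). Plugging in $t=A\log N/\log 2$ gives $2^{1-t}=2\cdot 2^{-A\log_2 N}=2N^{-A}$, and therefore
\[
\mathbb P(\mathcal M(\omega)\ge A\log N/\log 2)\le N\cdot 2N^{-A}=2N^{1-A},
\]
which is exactly the claimed bound. The hypothesis $1<A<2$ is not really needed for the inequality itself (it is only used downstream, to keep the bound $o(1)$ and to fit the Borel–Cantelli-type applications in Proposition \ref{Prop les pairs sont trop gros}); one should, however, double-check the boundary case $N=2$ and the rounding in $\lceil t\rceil$ to make sure no constant is lost.

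I do not expect any genuine obstacle here: the only points requiring minor care are (i) justifying cleanly that $\mathcal M(\omega)$ is dominated by $\max_{1\le i\le N}Z_i$ rather than by $\max_{1\le i\le \mathcal N(\omega)}Z_i$ (handled by $\mathcal N(\omega)\le N$, so that one may extend the max over a deterministic range and then apply the union bound), and (ii) the harmless replacement of $\lceil t\rceil$ by $t$ in the exponent, which only improves the constant. Everything else is a one-line union bound plus the geometric tail, so the proof is short.
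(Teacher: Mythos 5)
Your proposal is correct and follows essentially the same route as the paper: dominate the block lengths $X_i$ by the i.i.d.\ geometric variables $Z_i$, use $\mathcal{N}(\omega)\leq N$, and apply the geometric tail estimate at $y=A\log N/\log 2$. The only (harmless) difference is that you use a direct union bound $N\,\mathbb{P}(Z_1\geq t)\leq 2N^{1-A}$, whereas the paper bounds $\mathbb{P}(\mathcal{M}<y)\geq(1-2^{-\lfloor y\rfloor})^N$ by independence and then expands, which is why its displayed conclusion carries extra $o(N^{1-A})$ terms that your cleaner computation avoids.
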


\begin{proof}
Soit $y > 0$, comme pour tout $i$, $X_i \leq Z_i$ et que $\mathcal{N}(\omega) \leq N$, nous avons \begin{align*}
\mathbb{P}(\mathcal{M}(\omega) < y) &= \mathbb{P}(\forall 1 \leq i \leq \mathcal{N}(\omega) : X_i(\omega) < y )
\\ & \geq \mathbb{P}(\forall 1 \leq i \leq \mathcal{N}(\omega) : Z_i < y )
\\ & \geq \mathbb{P}(\forall 1 \leq i \leq N : Z_i < y )
\\ & \geq (1-2^{-\lfloor y \rfloor})^N.
\end{align*} En appliquant ce r\'esultat \`a $y = A \log N / \log 2$, ceci montre que \begin{align*}
\mathbb{P}(\mathcal{M}(\omega) \geq A \log N / \log 2 ) &= 1 - \mathbb{P}(\mathcal{M}(\omega) < A \log N / \log 2 )
\\ & \leq 1-(1-2^{-\lfloor A \log N / \log 2 \rfloor})^N
\\ & \leq N2^{-\lfloor A \log N / \log 2 \rfloor} + o(N2^{-\lfloor A \log N / \log 2 \rfloor})
\\ & \leq N2^{1-A \log N / \log 2}+o(N2^{-\lfloor A \log N / \log 2 \rfloor}) 
\\ &= 2N^{1-A} + o(N^{1-A}).
\end{align*}
\end{proof}

Voyons maintenant comment la Proposition \ref{Res proba} nous donne la Proposition \ref{Prop les pairs sont trop gros}.

\begin{proof}[D\'emonstration de la Proposition \ref{Prop les pairs sont trop gros}]
Nous avons clairement, comme $0$ est pair : 
\begin{align*}
E_k(2^N) &\geq  2^l\cdot \mathbb{P}(\mathcal{M}(\omega) < k)
\\& \geq 2^l\cdot \mathbb{P}(\mathcal{M}(\omega) < A\log N / \log 2)
\\& = 2^l \cdot (1-\mathbb{P}(\mathcal{M}(\omega) \geq A\log N / \log 2))
\\ & \geq 2^N \cdot \left(1-2N^{1-A}\right) + o(2^NN^{1-A}),
\end{align*} ce qui conclut notre preuve.
\end{proof}


\begin{thebibliography}{99}

\bibitem{AlloucheShallit} 
\textsc{J.-P. Allouche et J. Shallit}, \textit{Automatic sequences. Theory, applications, generalizations.} Cambridge University Press, Cambridge, 2004.

\bibitem{bourgain} 
\textsc{J. Bourgain}, \textit{M\"obius--{W}alsh correlation bounds and an estimate of {M}auduit and {R}ivat}, 
J. Anal. Math. \textbf{119} (2013), 147--163.

\bibitem{Cateland} 
\textsc{E. Cateland}, \textit{Suites digitales et suites $k$-r\'eguli\`eres}, Ph.D. thesis, Universit\'e Bordeaux I, 1992.

\bibitem{PrimesDigits}
\textsc{M.Drmota, C.Mauduit, J.Rivat}, \textit{Primes with an average sum of digits}, Compos. Math. \textbf{145 (2)}(2009) 271-292

\bibitem{gelfond} 
\textsc{A. O. Gelfond}, \textit{Sur les nombres qui ont des propri\'et\'es additives et multiplicatives donn\'ees}, Acta Arith. \textbf{13} (1967/1968), 259--265.

\bibitem{GrahamKolesnik}
\textsc{S.W.Graham, G.Kolesnik}, \textit{van der Corput's method of exponential sums} Cambridge University Press, Cambridge 

\bibitem{Green} 
\textsc{B. Green}, \textit{On (not) computing the Mobius function using bounded depth circuit}, Combinatorics, Probability and Computing \textbf{21} (2012), 942--951

\bibitem{HannaThese}
\textsc{G.Hanna}, \textit{Blocs de chiffres des nombres premiers (http://hannagautier.wixsite.com/hannagautier/research)}, Th\`ese de doctorat, Universit\'e de Lorraine

\bibitem{Hanna1}
\textsc{G.Hanna}, \textit{Sur les occurrences des mots dans les nombres premiers (http://arxiv.org/abs/1511.02068)}, Acta Arithmetica (\`a para\^itre)

\bibitem{Iwaniec}
\textsc{H.Iwaniec, E.Kowalski}, \textit{Analytic number theory}, American Mathematical Society, Colloquium Publications Volume 53

\bibitem{Kalai1} 
\textsc{G. Kalai}, \textit{Walsh Fourier Transform of the M\"obius function}, \texttt{http://mathoverflow.net/questions/57543/walsh-
fourier-transform-of-the-mobius-function} (2011).

\bibitem{Kalai2} 
\textsc{G. Kalai}, \textit{M\"obius Randomness of the Rudin--Shapiro Sequence}, \texttt{ http://mathoverflow.net/questions/97261/mobius-
randomness-of-the-rudin-shapiro-sequence} (2012).

\bibitem{MR:gelfond} 
\textsc{C. Mauduit et J. Rivat}, \textit{Sur un probl\`eme de {G}elfond: la somme des chiffres des nombres premiers},
Ann. of Math. (2) \textbf{171} (2010), 1591--1646.

\bibitem{RudinShapiro} 
\textsc{C. Mauduit et J. Rivat}, \textit{Prime numbers along 	Rudin--Shapiro sequences}, J. Eur. Math. Soc. \textbf{17} (2015), 2595--2642.

\bibitem{MuellnerAutomate}
\textsc{C.M\"ullner}, \textit{Automatic sequences fulfill the full {S}arnak conjecture} (F\'evrier 2016) http://arxiv.org/abs/1602.03042

\bibitem{Regine}
\textsc{R.Marchand, E.Zohoorian Azad}, \textit{Limit law of the length of the standard right factor of a
              {L}yndon word}, Combin. Probab. Comput. \textbf{16 (3)} (2007) 417-434


\end{thebibliography}
\end{document}